\theoremstyle{plain}
\newtheorem{thm}{Theorem}[section]
\newtheorem{lem}[thm]{Lemma}
\newtheorem{cor}[thm]{Corollary}
\newtheorem{prop}[thm]{Proposition}
\theoremstyle{definition}
\newtheorem{defi}[thm]{Definition}
\theoremstyle{remark}
\newtheorem{rem}[thm]{Remark}
\newtheorem{com}[thm]{Comment}
\renewenvironment{proof}{\textsc{Proof:}}{\qed}
\title{On Dynkin games with incomplete information
  }
\author{Christine Gr\"un \footnote{Laboratoire de Mathematiques de Brest
UMR 6205, 6 avenue Le Gorgeu CS 93837, 29238 BREST cedex 3, France; email: christine.gruen@univ-brest.fr.}
\footnote{Supported by the Marie Curie Initial Training Network (ITN) project: ``Deterministic and Stochastic Controlled Systems and
Application", FP7-PEOPLE-2007-1-1-ITN, No. 213841-2.}
\footnote{Ce travail a b\'en\'efici\'e une aide de l'Agence Nationale de la Recherche portant la r\'ef\'erence ANR-10-BLAN 0112}
}
\begin{document}
\maketitle

\begin{abstract}
In this paper we investigate a game of optimal stopping with incomplete information. There are two players of which only one is informed about the precise structure of the game. Observing the informed player the uninformed player is given the possibility to guess the missing information. We show that these games have a value which can be characterized as a viscosity solution to a fully non-linear variational PDE. Furthermore we derive a dual representation of the value function in terms of a minimization procedure. This representation allows under some additional assumptions to determine optimal strategies for the informed player.
\end{abstract}

\emph{Keywords.} Dynkin Games, Dynamic Programming, Viscosity Solutions, Incomplete Information\\

\section{Introduction}

In this paper we consider a Dynkin game with incomplete information. The game starts at time $0$ and ends at time $T$ paying off a certain terminal payoff. In between the players can choose to stop the game and recieve a certain payment dependent on who stopped the game first. However with regard to  the payoffs stopping might be less favourable for them than waiting for the other one to stop the game or the game to terminate. We assume that the game is played by two players. One player is informed about the payoffs, while the other one only knows them with a certain probability $(p_i)_{i\in\{1,\ldots,I\}}$. Furthermore we assume that the players observe each other during the game so the uninformed player will try to guess his missing information.\\
Games with this kind of information incompleteness have been introduced by Aumann and Maschler (see \cite{AuMaS}) in discrete time setting. Differential games and stochastic differential games with incomplete information in their spirit have been considered in Cardaliaguet and Rainer \cite{CaRa2}, who give a characterization  of the value function in terms of a fully non linear partial differential equation. As in the case of stochastic differential games with incomplete information studied by Cardaliaguet and Rainer  \cite{CaRa2}, we allow the players to use an additional randomization device. We note that randomized stopping times have already been used in Touzi and Vieille \cite{TV} and Laraki and Solan \cite{LaSo} in a different context. As a result even if the informed player knows the exact state of nature he might not stop when it is optimal to stop for him in order to preserve his information advantage.\\
It turns out that as in the discrete time setting of Aumann and Maschler the randomization device can be interpreted as a certain minimal martingale with a state space in the probability measures on $\{1,\ldots I\}$.  With the optimal measure this representation then allows to determine optimal strategies for the informed agent. This result has been generalized to differential games by Cardaliaguet and Rainer in \cite{CaRa1} and to stochastic differential games by the author in \cite{CG}.  A similar technique of minimization over martingale measures is introduced in De Meyer \cite{DMfin} to determine optimal strategies for informed agents in a financial market.\\
In this paper we extend the previous results to the framework of Dynkin games. We show that the value function of Dynkin games with information incompleteness exists and is determined by a solution to a fully non-linear second order variational partial differential equation. We use the latter characterization in order to establish a dual representation of the value via a minimization procedure over some martingale measures. This representation then allows - under some additional assumptions - to derive optimal strategies for the informed player.\\
Dynkin games were introduced by E. Dynkin in \cite{D} as a gametheoretical version of an optimal stopping problem. Ever since there has been a vast variety of results obtained by using analytical or purely probabilistic tools. As we are considering continuous time Dynkin games with a diffusion as underlying dynamic we would notably like to mention the works of Bensoussan and Friedman \cite{BF} and Friedman \cite{Frie} who were the first to connect Dynkin games to solutions of second order variational partial differential equations. For for a probabilistic approach we refer to Alario-Nazaret, Lepeltier and Marchal \cite{ALM}, Bismut \cite{Bismut2}, Ekstr{\"o}m and Peskir \cite{Ek1}, Eckstr{\"o}m and Villeneuve \cite{Ek2}, Lepeltier and Maingueneau \cite{LeMain}, Morimoto \cite{Mori}, Stettner \cite{Stettner} and the recent work of {Kobylanski}, {Quenez}  et de Campagnolle \cite{Kob}. In combination with controlled diffusions also BSDE methods were applied by Cvitanic and Karatzas \cite{CK} and Hamad\`ene and Lepeltier \cite{HaLe2}. 
Though the extension of the current paper to Dynkin games, where also the drift of the diffusion is controlled, might seem rather straight forward there are some subtleties to consider. Especially when generalizing the BSDE approach of \cite{CG} to an approach with reflected BSDE we have to take into account that for the well-posedness of reflected BSDE as in Hamad\`ene and Lepeltier \cite{HaLe2} or Hamad\`ene and Hassani \cite{HH} one basically needs that $\bold p$ is continuous. This however implies a severe restriction on the set of martingale measures $\mathcal{P}(t,p)$, making it impossible to just follow the proofs in \cite{CG}.\\
Of course our way to consider information incompleteness is rather specific and far from being the only way to model Dynkin games with incomplete information. A very interesting paper with a completely different ansatz is the recent work of Lempa and Matom\"aki \cite{LM}.\\

\section{Description of the game}

\subsection{Canonical setup and standing assumptions}

Let $\mathcal{C}([0,T];\mathbb{R}^d)$ be the set of continuous functions from $\mathbb{R}$ to $\mathbb{R}^d$, which are constant on $(-\infty,0]$ and on $[T,+\infty)$. We denote by $B_s(\omega_B)=\omega_B(s)$ the coordinate mapping on $\mathcal{C}([0,T];\mathbb{R}^d)$ and define $\mathcal{H}=(\mathcal{H}_s)$ as the filtration generated by $s \mapsto B_s$. We denote $\mathcal{H}_{t,s}$ the $\sigma$-algebra generated by paths up to time $s$ in $\mathcal{C}([t,T];\mathbb{R}^d)$.  Furthermore we provide $\mathcal{C}([0,T];\mathbb{R}^d)$ with the Wiener measure $\mathbb{P}_0$ on $(\mathcal{H}_s)$ and we consider the respective filtration augmented by $\mathbb{P}_0$ nullsets without changing the notation.\\

In the following we investigate a two-player zero-sum differential game starting at a time $t\geq0$ with terminal time $T$.
The dynamic is given by an uncontrolled diffusion on $(\mathcal{C}([0,T];\mathbb{R}^d),$ $(\mathcal{H}_{t,s})_{s\in[t,T]}, \mathcal{H},\mathbb{P}_0)$, i.e. for $t\in[0,T], x\in\mathbb{R}^d$
\begin{eqnarray}
 dX^{t,x}_s=b(s,X^{t,x}_s)ds+a(s,X^{t,x}_s)dB_s\ \ \ \ 
 X^{t,x}_{t}=x.
\end{eqnarray}
Let $I\in\mathbb{N}^*$ and $\Delta(I)$ denote the simplex of $\mathbb{R}^I$. The objective to optimize is characterized by
\begin{itemize}
	\item[(i)] terminal payoffs: $(g_i)_{i\in\{1,\ldots, I\}}:\mathbb{R}^d\rightarrow\mathbb{R}$,
	\item[(ii)] early execution payoffs for Player 2: $(f_i)_{i\in\{1,\ldots, I\}}:[0,T]\times\mathbb{R}^d\rightarrow\mathbb{R}$,
	\item[(iii)] early execution payoffs for Player 1: $(h_i)_{i\in\{1,\ldots, I\}}:[0,T]\times\mathbb{R}^d\rightarrow\mathbb{R}$,
\end{itemize}
which are chosen with probability $p=(p_i)_{i\in\{1,\ldots, I\}}\in \Delta(I)$ before the game starts. Player 1 chooses $\tau\in[0,T]$ to minimize, Player 2 chooses $\sigma\in[0,T]$ to maximize the expected payoff:
\begin{equation}
\begin{array}{l}
	J_i(t,x,\tau,\sigma)=\mathbb{E}\bigg[f_i(\sigma,X^{t,x}_\sigma)1_{\sigma<\tau,\sigma<T}+h_i(\tau,X^{t,x}_\tau)1_{\tau\leq\sigma,\tau<T}+ g_i(X^{t,x}_T) 1_{\sigma=\tau=T}\bigg].
\end{array}
\end{equation}
We assume that both players observe their opponents control. However Player 1 knows which payoff he minimizes, Player 2 just knows the respective probabilities $p_{i}$ for scenario $i\in\{1,\ldots, I\}$.\\

The following will be the standing assumption throughout the paper.\\

{\bf Assumption (A)}
\begin{itemize}
	\item[(i)] $b:[0,T]\times\mathbb{R}^d\rightarrow \mathbb{R}^d$ is bounded and Lipschitz continuous with respect to $(t,x)$. For $1\leq k,l\leq d$ the function $\sigma_{k,l}:[0,T]\times\mathbb{R}^d \rightarrow \mathbb{R}$ is bounded and Lipschitz continuous with respect to $(t,x)$.
	
	\item[(ii)] $(g_i)_{i\in\{1,\ldots, I\}}:\mathbb{R}^d \rightarrow \mathbb{R}$, $(f_i)_{i\in\{1,\ldots, I\}}:[0,T]\times\mathbb{R}^d\rightarrow\mathbb{R}$ and $(h_i)_{i\in\{1,\ldots, I\}}:[0,T]\times\mathbb{R}^d\rightarrow\mathbb{R}$ are bounded and Lipschitz continuous. For all ${i\in\{1,\ldots, I\}}$, $t\in[0,T]$, $x\in\mathbb{R}^d$ we have that
	\begin{equation}
	  f_i(t,x)\leq h_i(t,x)
	  \end{equation}
	  and 
	  \begin{equation}
	  f_i(T,x)\leq g_i(x)\leq h_i(T,x).
	  \end{equation}
\end{itemize}

\begin{rem}
Note that (A) (ii) implies: for all $t\in[0,T]$, $x\in\mathbb{R}^d$, $p\in\Delta(I)$
\begin{equation}
	  \langle p, f(t,x)\rangle \leq \langle p,h(t,x)\rangle
\end{equation}
and
\begin{equation}
	  \langle p, f(T,x)\rangle\leq\langle p,g(x)\rangle \leq \langle p,h(T,x)\rangle.
\end{equation}
\end{rem}

\subsection{Random stopping times}

In Dynkin games both players have the possibility to stop the game with undergoing a certain punishment (early execution payment), so strategies in this case consist of a stopping decision.

\begin{defi}
At time $t\in[0,T]$ an admissible stopping time for either player for the game terminating at time $T$ is a $(\mathcal{H}_{t,s})_{s\in[{t,T}]}$ stopping time with values in $[t,T]$. We denote the set of admissible stopping times by $\mathcal{T}(t,T)$. In the following we shall omit $T$ in the notation whenever it is obvious.
\end{defi}

As in \cite{LaSo}, \cite{TV} we allow the players to choose their stopping decision randomly
\begin{defi}
A randomized stopping time after time $t\in[0,T]$ is a measurable function $\mu:[0,1]\times\mathcal{C}([t,T];\mathbb{R}^d)\rightarrow[t,T]$ such that for all $r\in[0,1]$
\[
\tau^r(\omega):=\mu(r,\omega)\in\mathcal{T}(t).
\]
We denote the set of randomized stopping times by $\mathcal{T}^r(t)$.
\end{defi}

For any $(t,x,p)\in[0,T]\times\mathbb{R}^d\times\Delta(I)$, $\mu=(\mu_i)_{i\in\{1,\ldots, I\}}\in (\mathcal{T}^r(t))^I, \nu \in \mathcal{T}^r(t)$ we set for ${i\in\{1,\ldots, I\}}$
        		\begin{equation}
		\begin{array}{rcl}
        			J_i(t,x,\mu_i,\nu)&=&\mathbb{E}_{\mathbb{P}_0\otimes\lambda\otimes\lambda}\bigg[f_i(\nu,X^{t,x}_\nu)1_{\nu<{\mu_i},\nu<T}\\
			&&\ \ \ \ \ \ \ \ \ \ \ \ \ \ \ \ \ +h_i(\mu_i,X^{t,x}_{\mu_i})1_{ \mu_i\leq\nu,\mu_i<T}+ g_i(X^{t,x}_T) 1_{\mu_i=\nu=T}\bigg],
		\end{array}
		\end{equation}
		where $\lambda$ denotes the Lebesgue measure on $[0,1]$. (In the following we will skip the subscript $ {\mathbb{P}_0\otimes\lambda\otimes\lambda}$.) Furthermore we set
		\begin{equation}
		J(t,x,p,\mu,\nu)=\sum_{i=1}^I p_i J_i(t,x,\mu_i,\nu).
		\end{equation}
		We note that the information advantage of Player 1 is reflected in (2.8) by having the possibility to choose a randomized stopping time $\mu_i$  for each state of nature $i\in\{1,\ldots,I\}$.\\
		
\subsection{An example}	
 To illustrate the importance of not immediately revealing the information advantage we would like to conclude this section with a basic deterministic example. Assume that the game takes place between times $t=0$ and $T=1$. There are two possible states of nature $i=1,2$ picked with probability $(p,1-p)$ before the game starts. They are associated to the two payoff functionals
\begin{equation}
J_1(\tau,\sigma)=(2\tau+1)1_{\tau<\sigma,\tau<1}+(2\sigma-1)1_{\sigma\leq\tau,\sigma<1}+2\ 1_{\sigma=\tau=1}
\end{equation}
and
\begin{equation}
J_2(\tau,\sigma)=(3-\tau)1_{\tau<\sigma,\tau<1}+(2-\sigma)1_{\sigma\leq\tau,\sigma<1}+\frac{3}{2}\ 1_{\sigma=\tau=1}.
\end{equation}
 Player 1, who is informed about the actual state of nature, chooses $\tau\in[0,1]$ to minimize and Player 2 chooses $\sigma\in[0,1]$ to maximize the payoff functional. However Player 2 is not informed whether it is $J_1$ or $J_2$ he has to optimize.\\
Now if the informed player plays a revealing strategy: he immediately stops the game i.e. $\tau=0$, if  $i=1$ is picked, and the payoff is $J_1(0,\sigma)=1$. In case $i=2$ he does not stop, i.e. $\tau=1$, for $i=2$. Player 2 does not know $i$ a priori, but if he sees that the revealing Player 1 does not stop he can be sure $i=2$, hence the information advantage is lost. In this case it is optimal for Player 2 to stop immediately which yields the payoff $J_2(\tau,0)=2$. So the overall payoff for a revealing strategy of Player 1 would be $p J_1(0,\sigma) + (1-p) J_2(\tau,0)=2-p$.\\
On the other hand if Player 1 plays non-revealing, that means acting as if he does not know $i$, both player face a stopping game with payoff
\begin{equation}
\begin{array}{l}
((3-2p)+(3p-1)\tau)1_{\tau<\sigma,\tau<1}+((2-3p)+(3p-1)\sigma)1_{\sigma\leq\tau,\sigma<1}\\
\ \\
\ \ \ \ \  \ \ \  \ \ \ \  \ \ \  \ \ \ \ \ \ \ \ 	+(\frac{3}{2} +\frac{1}{2}p)\ 1_{\sigma=\tau=1},
\end{array}
\end{equation}
where only $p\in[0,1]$ is known to both players. For $p<\frac{1}{7}$ the uninformed player in his turn will stop immediately. Hence in this case, we have an overall payoff of $p J_1(\tau,0) + (1-p) J_2(\tau,0)=2-3p$, which is indeed smaller than the revealing case. As we see later in section 6.3. in general a mixing of randomly revealing and non-revealing strategies will be optimal for the informed player.

\section{Value of the game}

For any $(t,x,p)\in[0,T]\times\mathbb{R}^d\times\Delta(I)$ we define  the lower value function by
\begin{equation}
					V^-(t,x,p)=\sup_{\nu \in \mathcal{T}^r(t)} \inf_{\mu\in (\mathcal{T}^r(t))^I} J(t,x,p,\mu,\nu)
\end{equation}
and the upper value function by
\begin{equation}
		 			V^+(t,x,p) = \inf_{\mu\in (\mathcal{T}^r(t))^I}\sup_{\nu \in \mathcal{T}^r(t)} J(t,x,p,\mu,\nu).
\end{equation}

\begin{rem}
It is well known (e.g. \cite{CaRa2} Lemma 3.1) that it suffices for the uninformed player to use admissible non-random strategies in (3.2). So we can use the easier expression
\begin{equation}
		 			V^+(t,x,p) = \inf_{\mu\in (\mathcal{T}^r(t))^I}\sup_{\sigma \in \mathcal{T}(t)} J(t,x,p,\mu,\sigma).
\end{equation}

\end {rem}

To show that the game has a value we establish:

\begin{thm}
       		 For any $(t,x,p)\in[0,T]\times\mathbb{R}^d\times\Delta(I)$ the value of the game is given by
		 \begin{equation}
		 			V(t,x,p):=V^+(t,x,p)=V^-(t,x,p).
		\end{equation}
\end{thm}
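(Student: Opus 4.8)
The plan is to establish the existence of the value $V = V^+ = V^-$ by the standard route for games with incomplete information: first exhibit the convexity of $V^-$ and the concavity-type structure coming from the informed player's use of splitting, then derive a dynamic programming principle (sub- and super-optimality inequalities) for $V^+$ and $V^-$, and finally use these to show that both functions are viscosity solutions of the same fully nonlinear variational PDE with the same boundary data, concluding by a comparison principle. Since $V^- \le V^+$ always holds, the equality then follows. In more detail, I would proceed as follows.

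\textbf{Step 1: A priori regularity.} Using Assumption (A) and standard estimates on the diffusion $X^{t,x}$, I would show that $V^+$ and $V^-$ are bounded and Lipschitz continuous in $(t,x,p)$, uniformly; the $p$-Lipschitz bound is immediate from the linearity of $J$ in $p$ and the boundedness of the payoffs, while Lipschitz continuity in $x$ follows from the Lipschitz dependence of $f_i,h_i,g_i$ and moment estimates for $X^{t,x}_s - X^{t,x'}_s$, and continuity in $t$ from the $\frac12$-Hölder continuity of the flow together with an argument controlling the difference of the optimal-stopping regions over a small time increment. I would also record that $p \mapsto V^-(t,x,p)$ is convex: this is the usual consequence of the informed player being able to split a prior $p = \sum_k \lambda_k p^k$ into posteriors $p^k$ and play optimally in each, which only helps the minimizer, so $V^-(t,x,p) \le \sum_k \lambda_k V^-(t,x,p^k)$.

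\textbf{Step 2: Dynamic programming.} I would prove a sub-dynamic-programming principle for $V^+$ and a super-dynamic-programming principle for $V^-$, of the form that for any stopping time $\theta \in \mathcal{T}(t)$ (or small deterministic horizon $t+\delta$), $V^+(t,x,p)$ is bounded above by the value of the ``truncated'' game on $[t,\theta]$ in which the terminal reward at $\theta$ is $V^+(\theta, X^{t,x}_\theta, \cdot)$ evaluated at the relevant conditional law, with the symmetric inequality for $V^-$. Here the non-revealing auxiliary game (where Player~1 forgoes his information and both play an ordinary Dynkin game with payoff $\langle p, \cdot\rangle$) provides the sub-solution structure, and the splitting argument provides the super-solution structure. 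This is the technical heart of the paper and the step I expect to be the main obstacle: one must carefully glue randomized stopping times across the intermediate time, handle the measurable selection of the posteriors generated by Player~1's strategy on $[t,\theta]$, and — as the introduction warns — cope with the fact that the relevant martingale-measure / posterior dynamics need not be continuous, so the reflected-BSDE shortcut of \cite{CG} is unavailable and one must argue directly with the stopping-game payoff and the constraints $f_i \le h_i$, $f_i(T,\cdot)\le g_i \le h_i(T,\cdot)$.

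\textbf{Step 3: Viscosity characterization and comparison.} From the dynamic programming inequalities of Step~2, together with Step~1, I would show that $V^+$ is a viscosity subsolution and $V^-$ a viscosity supersolution (and conversely, each the solution) of the fully nonlinear variational inequality associated with the Dynkin game with incomplete information: informally, at points where $V$ is not in the obstacle regime one has the Hamilton--Jacobi--Bellman--Isaacs operator involving the generator of $X$ acting in $(t,x)$ and a convexity constraint in $p$ (the ``$\lambda_{\min}$ of the Hessian in $p$'' term encoding that the informed player will not let $V$ be locally strictly convex in $p$ off the boundary), with the obstacles $\langle p,f\rangle$ and $\langle p,h\rangle$, and terminal condition sandwiched by $\langle p,g\rangle$. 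Both $V^+$ and $V^-$ satisfy the same boundary/terminal data by the constraints in (A). A comparison principle for this class of equations — of the type established by Cardaliaguet and Rainer \cite{CaRa2} for the analogous PDE in the stochastic differential game setting, adapted to the variational (double-obstacle) form — then yields $V^+ \le V^-$. Combined with the trivial inequality $V^- \le V^+$, this gives $V^+ = V^-$ and proves the theorem.
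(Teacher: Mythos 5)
Your high-level strategy (regularity and convexity $\Rightarrow$ dynamic programming $\Rightarrow$ viscosity sub/super-solution $\Rightarrow$ comparison $\Rightarrow$ $V^+ \le V^-$) matches the paper's architecture, and Steps 1 and 3 are essentially what the paper does: Propositions 4.1--4.3 give boundedness, Lipschitz/H\"older regularity and convexity in $p$; Theorem 5.1 gives the subsolution property of $V^+$ from the sub-DPP; and Theorem 3.7 (proved in the Appendix by induction on $I$, reducing to faces of $\Delta(I)$, not merely cited) furnishes the comparison. However, Step 2 contains a genuine gap, and it is precisely the one the paper flags.

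You propose to prove a \emph{direct} super-dynamic programming principle for $V^-$. This does not work here, and the paper says so explicitly after Theorem 4.4: ``In contrast to the subdynamic programming for $V^+$ a superdynamic programming principle for $V^-$ can not be derived directly.'' The obstruction is structural: $V^-=\sup_\nu\inf_\mu J$ has the uninformed player's randomized stopping time on the outside, and a super-DPP would require concatenating her strategy at an intermediate time $\theta$ and handing the informed player an optimal response to the \emph{continuation} problem. But the continuation problem's state includes the posterior distribution induced by Player 1's behaviour on $[t,\theta]$, which is not a functional of the path that Player 2 observes; there is no admissible way to insert ``$V^-(\theta, X_\theta, p_\theta)$'' as a terminal reward while keeping the outer $\sup$ over $\nu$. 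The paper's resolution is the key technical move you are missing: since $V^-$ is convex in $p$, pass to the Fenchel conjugate $(V^-)^*(t,x,\hat p)$, for which Proposition 4.6 gives a minimax representation that \emph{does} admit a sub-dynamic programming principle (Theorem 4.10, again a sub-DPP because now the inf is outside in $\hat p$). One then shows $(V^-)^*$ is a viscosity subsolution to a dual obstacle problem (Theorem 5.2) and transfers this to the supersolution property of $V^-$ for the primal PDE (Theorem 5.3), using the observation (Lemma 4.5) that when a test function touching $V^-$ from below has $\lambda_{\min}(p,\partial_p^2\phi)>0$, the conjugate $(V^-)^*$ is differentiable in $\hat p$ at the relevant dual point with derivative $\bar p$, which rules out the case $\bar p \in \{e_i\}$ and lets one invoke the interior dynamic programming estimate. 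Without this duality step your argument stalls.

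A minor misattribution in Step 1: the splitting argument (gluing $\epsilon$-optimal controls over a convex combination of priors) is how the paper proves convexity of $V^+$, not $V^-$; convexity of $V^-$ is immediate from the reformulation $V^-(t,x,p)=\sup_\nu \sum_i p_i \inf_\mu J_i$, which is a supremum of affine functions of $p$. This does not affect the conclusion but is worth correcting.
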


\begin{rem}
Note that by definition $V^+(t,x,p)\geq V^-(t,x,p)$.
\end{rem}

To establish $V^+(t,x,p)\leq V^-(t,x,p)$ we will show that $V^+$ is a viscosity subsolution and $V^-$ a viscosity supersolution to a nonlinear obstacle problem. More precisely we define the differential operator $\mathcal{L}[w](t,x,p):=\frac{1}{2}\textnormal{tr}(aa^*(t,x)D_x^2w(t,x,p))+b(t,x)D_xw(t,x,p)$ and consider
\begin{equation}
				\begin{array}{l}
	\max \bigg\{ \max\{\min\{(-\frac{\partial} {\partial t}-\mathcal{L})[w],w-\langle f(t,x),p\rangle\},\\
	 \ \ \ \ \ \ \ \ \ \ \ \ \ \ \ \ \ \ \ \ \ \ \ \ \ \ \ \ \ \ \ \ \ \ \ \ \ \ \ \ \ \ \ \ \ w-\langle h(t,x),p\rangle\},-\lambda_{\min}\left(p,\frac{\partial ^2 w}{\partial p^2}\right)\bigg\}=0
		\end{array}
			\end{equation}
with terminal condition $w(T,x,p)=\sum_{i=1,\ldots,I}p_ig_i(x)$, where for all $p\in\Delta(I)$, $A\in\mathcal{S}^I$  (where $\mathcal{S}^I$ denotes the set of symmetric $I\times I$ matrices)
\begin{eqnarray*}
\lambda_{\min}(p,A):=\min_{z\in T_{\Delta(I)(p)}\setminus\{0\}} \frac{\langle Az,z\rangle}{|z|^2}.
\end{eqnarray*}
and  $T_{\Delta(I)(p)}$ denotes the tangent cone to $\Delta(I)$ at $p$, i.e. $T_{\Delta(I)(p)}=\overline{\cup_{\lambda>0}(\Delta(I)-p)/\lambda}$.

\begin{rem}
Note that since by (2.5), (2.6)  the obstacles are separated, we one can consider as in the classical case (3.5) as
\begin{equation}
				\begin{array}{l}
	\max \bigg\{ \min\{\max\{(-\frac{\partial} {\partial t}-\mathcal{L})[w],w-\langle h(t,x),p\rangle\},\\
	 \ \ \ \ \ \ \ \ \ \ \ \ \ \ \ \ \ \ \ \ \ \ \ \ \ \ \ \ \ \ \ \ \ \ \ \ \ \ \ \ \ \ \ \ \  w-\langle f(t,x),p\rangle\},-\lambda_{\min}\left(p,\frac{\partial ^2 w}{\partial p^2}\right)\bigg\}=0.
		\end{array}
\end{equation}

\end{rem}

\begin{defi}
A function $w:[0,T]\times\mathbb{R}^d\times\Delta(I)\rightarrow\mathbb{R}$ is a viscosity subsolution to (3.5) if and only if for all $(\bar t,\bar x, \bar p)\in[0,T)\times\mathbb{R}^d\times\textnormal{Int}(\Delta(I))$ and any test function $\phi:[0,T]\times\mathbb{R}^d\times\Delta(I)\rightarrow\mathbb{R}$ such that $w-\phi$ has a (strict) maximum at $(\bar t,\bar x, \bar p)$ with $w(\bar t,\bar x, \bar p)-\phi(\bar t,\bar x, \bar p)=0$ we have, that
\begin{equation*}
				\begin{array}{l}
	\max \bigg\{ \max\{\min\{(-\frac{\partial} {\partial t}-\mathcal{L})[\phi],\phi-\langle f(t,x),p\rangle\},\\
	 \ \ \ \ \ \ \ \ \ \ \ \ \ \ \ \ \ \ \ \ \ \ \ \ \ \ \ \ \ \ \ \ \ \ \ \ \ \ \ \ \ \ \ \ \ \ \phi-\langle h(t,x),p\rangle\},-\lambda_{\min}\left(p,\frac{\partial ^2 \phi}{\partial p^2}\right)\bigg\}\leq0
		\end{array}
			\end{equation*}
at $(\bar t,\bar x,\bar p)$.
This is equivalent to:
\begin{itemize}
\item [(i)] $\lambda_{\min}\left(p,\frac{\partial ^2 \phi}{\partial p^2}\right)\geq 0$
\item[(ii)] $w(\bar t,\bar x, \bar p)=\phi(\bar t,\bar x, \bar p)\leq\langle h(\bar t, \bar x),\bar p\rangle$
\item[(iii)] If $w(\bar t,\bar x, \bar p)=\phi(\bar t,\bar x, \bar p)>\langle f(\bar t, \bar x),\bar p\rangle$, then $(\frac{\partial} {\partial t}+\mathcal{L})[\phi](\bar t,\bar x, \bar p)\geq 0$.
\end{itemize}
\end{defi}

\begin{defi}
A function $w:[0,T]\times\mathbb{R}^d\times\Delta(I)\rightarrow\mathbb{R}$ is a viscosity supersolution to (3.5) if and only if for all $(\bar t,\bar x, \bar p)\in[0,T)\times\mathbb{R}^d\times\Delta(I)$ and any test function $\phi:[0,T]\times\mathbb{R}^d\times\Delta(I)\rightarrow\mathbb{R}$ such that $w-\phi$ has a (strict) minimum at $(\bar t,\bar x, \bar p)$ with $w(\bar t,\bar x, \bar p)-\phi(\bar t,\bar x, \bar p)=0$ we have, that
\begin{equation*}
				\begin{array}{l}
	\max \bigg\{ \min\{\max\{(-\frac{\partial} {\partial t}-\mathcal{L})[\phi],\phi-\langle f(t,x),p\rangle\},\\
	 \ \ \ \ \ \ \ \ \ \ \ \ \ \ \ \ \ \ \ \ \ \ \ \ \ \ \ \ \ \ \ \ \ \ \ \ \ \ \ \ \ \ \ \ \ \ \phi-\langle h(t,x),p\rangle\},-\lambda_{\min}\left(p,\frac{\partial ^2 \phi}{\partial p^2}\right)\bigg\}\geq0
		\end{array}
			\end{equation*}
at $(\bar t,\bar x,\bar p)$.
This is equivalent to: if
\[\lambda_{\min}\left(p,\frac{\partial ^2 \phi}{\partial p^2}\right)> 0,\]
we have, that
\begin{itemize}
\item[(i)] $w(\bar t,\bar x, \bar p)=\phi(\bar t,\bar x, \bar p)\geq\langle f(\bar t, \bar x),\bar p\rangle$
\item[(ii)] If $w(\bar t,\bar x, \bar p)=\phi(\bar t,\bar x, \bar p)<\langle h(\bar t, \bar x),\bar p\rangle$, then  $(\frac{\partial} {\partial t}+\mathcal{L})[\phi](\bar t,\bar x, \bar p)\leq 0$.
\end{itemize}
\end{defi}

An essential part of the proof of Theorem 3.2. is given by the following comparison result. We postpone the proof to the appendix.

\begin{thm}
Let $w_1:[0,T] \times \mathbb{R}^d\times \Delta(I)\rightarrow\mathbb{R}$ be a bounded, continuous viscosity subsolution to (3.5), which is uniformly Lipschitz continuous in $p$, and $w_2:[0,T] \times \mathbb{R}^d\times \Delta(I)\rightarrow\mathbb{R}$ be a bounded, continuous  viscosity supersolution to (3.5), which is uniformly Lipschitz continuous in $p$. Assume that 
\begin{equation}
w_1(T,x,p)\leq w_2(T,x,p)
\end{equation}
for all $x\in\mathbb{R}^d,p\in\Delta(I)$. Then
\begin{equation}
w_1(t,x,p)\leq w_2(t,x,p)
\end{equation}
for all $(t,x,p)\in[0,T]\times\mathbb{R}^d\times\Delta(I)$.
\end{thm}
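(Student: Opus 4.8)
The plan is to prove the comparison principle for the fully nonlinear obstacle problem (3.5) by a doubling-of-variables argument adapted to three features that are all present here: the spatial second-order operator $\mathcal{L}$, the degenerate "concavity in $p$" constraint encoded by $-\lambda_{\min}(p,\partial^2_{pp}w)$, and the two obstacles $\langle f,p\rangle$ and $\langle h,p\rangle$. I would first reduce to a strict version of the problem by the standard perturbation $w_1^\varepsilon(t,x,p):=w_1(t,x,p)-\varepsilon/(T-t)-\delta\,e^{\lambda t}(1+|x|^2)$ (or an additive linear-in-$t$ shift), which turns $w_1^\varepsilon$ into a strict subsolution, pushes a possible maximum away from $t=T$, and gives coercivity in $x$ so that the supremum in the doubling argument is attained; one then lets $\varepsilon,\delta\to 0$ at the end. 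By the remark after Assumption (A) and the separated-obstacle structure (2.5)–(2.6), it is enough to work with the equivalent formulation and the unpacked sub/supersolution conditions in Definitions 3.7 and 3.8.

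The core step is the doubling of variables in $(t,x)$ while keeping the $p$-variable handled separately. Assume for contradiction that $M:=\sup_{[0,T]\times\mathbb{R}^d\times\Delta(I)}(w_1-w_2)>0$. Introduce
\[
\Phi_{\alpha,\beta}(t,x,y,p,q)=w_1(t,x,p)-w_2(t,y,q)-\frac{\alpha}{2}|x-y|^2-\frac{\beta}{2}|p-q|^2-\text{(the strictifying terms)},
\]
and let $(\bar t,\bar x,\bar y,\bar p,\bar q)$ be a maximum point. The uniform Lipschitz continuity of $w_1,w_2$ in $p$ forces $\beta|\bar p-\bar q|$ to stay bounded and, after first sending $\beta\to\infty$, identifies $\bar p=\bar q=:\bar p$; standard estimates give $\alpha|\bar x-\bar y|^2\to 0$. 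Here one must separately treat the case $\bar p\in\partial\Delta(I)$: because the subsolution property is only required on $\mathrm{Int}(\Delta(I))$ while the supersolution property holds on all of $[0,T)\times\mathbb{R}^d\times\Delta(I)$, the Lipschitz-in-$p$ bound together with the terminal inequality (3.7) must be used to rule out a boundary maximum (for instance by a further small translation of $\bar p$ into the interior, controlled by the Lipschitz constant). Then apply the Crandall–Ishii lemma to produce matrices $(X,Y)$ with $X\le Y$ (in the appropriate ordering, modulo the usual $\alpha\|X-Y\|$ correction) realizing the second-order jets of $w_1$ in $x$ and $w_2$ in $y$, and carry the $p$-second-derivative through via the penalization test functions.

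The contradiction is then extracted by a case analysis driven by the $\max\{\cdots,-\lambda_{\min}(p,\partial^2_{pp}\cdot)\}$ structure. Case 1: if at the maximum point the relevant test function for $w_2$ has $\lambda_{\min}(\bar p,\partial^2_{pp}\phi_2)>0$, then the supersolution condition gives either $w_2\ge\langle f(\bar t,\bar y),\bar p\rangle$ with the parabolic inequality $(\partial_t+\mathcal{L})[\phi_2]\le 0$ when $w_2<\langle h,\bar p\rangle$; matching this against the subsolution alternatives for $w_1$ (either $w_1\le\langle h(\bar t,\bar x),\bar p\rangle$, or the reverse parabolic inequality $(\partial_t+\mathcal{L})[\phi_1]\ge 0$) and subtracting, the $\mathcal{L}$-terms are absorbed using $X\le Y$ and Lipschitz continuity of $a,b$, the strictifying terms produce a strictly negative leftover, and the obstacle terms are controlled since $|\bar x-\bar y|\to 0$ and $f,h$ are Lipschitz — contradicting $M>0$. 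Case 2: if $\lambda_{\min}\le 0$ for the supersolution test function (equivalently, the $p$-concavity constraint is the binding one), this is where the minimal-martingale / convexity machinery enters: one shows that the penalized maximum cannot be attained with strict concavity failing on the supersolution side unless it also fails on the subsolution side, and then uses the fact that along the tangent-cone direction $z$ realizing $\lambda_{\min}$ the second differences of $w_1$ and $w_2$ are ordered, again yielding a contradiction. I expect the handling of the degenerate $p$-concavity term together with the boundary behaviour at $\partial\Delta(I)$ — that is, making Case 2 and the boundary reduction rigorous simultaneously — to be the main obstacle; this is precisely the point where one needs the Lipschitz-in-$p$ hypothesis and where a naive copy of a pure Dynkin-game or pure incomplete-information comparison proof breaks down. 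The routine absorption of the second-order $x$-terms and the obstacle terms is standard and would be done quickly.
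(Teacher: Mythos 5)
Your proposal correctly sets up the doubling-of-variables machinery and correctly identifies the two genuinely hard points --- the behaviour at $\partial\Delta(I)$ and the degenerate constraint $-\lambda_{\min}(p,\partial^2_{pp}w)$ --- but it does not actually resolve either of them, and the resolutions you sketch are not the ones that work. You acknowledge this yourself when you write that making your Case~2 and the boundary reduction rigorous ``is the main obstacle''; that obstacle is precisely where the content of the proof lies.

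The boundary of $\Delta(I)$ cannot be handled by ``a further small translation of $\bar p$ into the interior, controlled by the Lipschitz constant.'' The subsolution inequality is simply not available at boundary points, and Lipschitz continuity in $p$ alone will not give you $w_1\le w_2$ on $\partial\Delta(I)$: the inequality there has to be \emph{proved}, not estimated away. The paper's actual mechanism is an \emph{induction on $I$}: Propositions 9.1 and 9.2 show that the restrictions of $w_1,w_2$ to any face $\Delta(\tilde I)$ are still sub- and supersolutions of the same problem with fewer states (Proposition 9.2 is the nontrivial one, since $\textnormal{Int}(\Delta(\tilde I))\not\subseteq\textnormal{Int}(\Delta(I))$; it requires a logarithmic barrier $\sigma(p)=\sum_{j\notin\tilde I}\ln(p_j(1-p_j))$ to push an approximating maximum into $\textnormal{Int}(\Delta(I))$ and a projection $\Pi$ onto the face together with the Lipschitz-in-$p$ bound). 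The base case $I=1$ is the classical double-obstacle comparison. The induction hypothesis then gives $w_1\le w_2$ on $[0,T]\times\mathbb{R}^d\times\partial\Delta(I)$, so any point realizing a positive $M_{\epsilon,\alpha,\eta}$ must have $\bar p\in\textnormal{Int}(\Delta(I))$. Your sketch has no substitute for this.

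Your ``Case 2'' (where the supersolution's $\lambda_{\min}$ is the binding term) is also not something to be argued around by ``minimal-martingale / convexity machinery''; in the paper's proof it simply never occurs, by design. The trick is to add the strict-convexity injection $+\frac{\beta}{2}(|p|^2+|q|^2)$ to the doubled test function (see (9.12)). After applying the Crandall--Ishii lemma one obtains $(P_1-P_2)|_{T_I}\le(-\beta+\sigma\beta^2)\,\mathrm{id}$, which together with the subsolution inequality $\lambda_{\min}(\tilde p,P_1)\ge 0$ forces $\lambda_{\min}(\tilde q,P_2)>0$ \emph{strictly}. Thus the supersolution's obstacle/parabolic branch is always activated at the relevant point, and the argument reduces to the standard comparison between (9.19)--(9.21). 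Without this injection your case split is genuine and unresolved. In short, the scaffolding of your proposal is right, but the two ingredients you flag as hard --- face induction via Propositions 9.1--9.2 and the $\beta$-convexity penalization --- are exactly what is missing, and they are not optional refinements but the core of the proof.
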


\section{Dynamic programming}

\subsection{Regularity properties}

\begin{prop}
$V^+(t,x,p)$ and $V^-(t,x,p)$ are uniformly Lipschitz continuous in $x$ and $p$ and H\"older continuous in $t$.
\end{prop}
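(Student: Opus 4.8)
The plan is to treat the three moduli of continuity separately; the dependence on $p$ and on $x$ is essentially elementary, while the dependence on $t$ is the substantial part. Throughout, let $M$ denote a common bound for the functions $f_i,g_i,h_i$ and $L$ a common Lipschitz constant. For fixed $\mu_i,\nu$ the three events appearing in $J_i$ form an a.s.\ partition, so $|J_i(t,x,\mu_i,\nu)|\le M$ and hence $|V^{\pm}|\le M$. Since $J(t,x,p,\mu,\nu)=\sum_i p_iJ_i(t,x,\mu_i,\nu)$ is affine in $p$ with coefficients bounded by $M$, and since the strategy sets $(\mathcal T^r(t))^I$ and $\mathcal T^r(t)$ do not depend on $p$, the elementary inequality $|\sup_\nu\inf_\mu a-\sup_\nu\inf_\mu b|\le\sup_{\mu,\nu}|a-b|$ gives $|V^{\pm}(t,x,p)-V^{\pm}(t,x,p')|\le M\sum_i|p_i-p_i'|$, uniformly in $(t,x)$, which is the Lipschitz continuity in $p$.

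\emph{Lipschitz continuity in $x$.} The key observation is that in the present canonical set-up a randomized stopping time $\mu_i$ or $\nu$ is a measurable functional of the driving path $B$ and the auxiliary uniform variable only, never of $X^{t,x}$; hence, on replacing $x$ by $x'$, the indicator events in $J_i$ are unchanged and only the arguments of $f_i,h_i,g_i$ move. Thus, uniformly in $\mu_i,\nu$,
\[
\bigl|J_i(t,x,\mu_i,\nu)-J_i(t,x',\mu_i,\nu)\bigr|\ \le\ 3L\,\mathbb E\Bigl[\sup_{s\in[t,T]}\bigl|X^{t,x}_s-X^{t,x'}_s\bigr|\Bigr].
\]
The classical moment estimate for the SDE (Burkholder--Davis--Gundy and Gronwall, using that $b$ and $a$ are Lipschitz) yields $\mathbb E[\sup_{s\in[t,T]}|X^{t,x}_s-X^{t,x'}_s|^2]\le C|x-x'|^2$, so the right-hand side is $\le C|x-x'|$. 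Averaging against $p$ and passing to $\sup_\nu\inf_\mu$ (resp.\ $\inf_\mu\sup_\nu$) gives $|V^{\pm}(t,x,p)-V^{\pm}(t,x',p)|\le C|x-x'|$.

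\emph{H\"older continuity in $t$.} This is the delicate step. Fix $0\le t<t'\le T$, put $\delta=t'-t$, and aim for $|V^{\pm}(t,x,p)-V^{\pm}(t',x,p)|\le C\sqrt\delta$; the arguments for $V^+$ and $V^-$ are mirror images, with the roles of the two players and of the two obstacles $\langle f,p\rangle$ and $\langle h,p\rangle$ interchanged, so I describe $V^+$. The ingredients are the time-increment bound $\mathbb E[\sup_{s\in[t,t']}|X^{t,x}_s-x|^2]\le C\delta$ (valid because $b,a$ are bounded), the flow identity $X^{t,x}_s=X^{t',X^{t,x}_{t'}}_s$ for $s\ge t'$, the joint Lipschitz continuity of $f,g,h$, and the $x$-Lipschitz bound just obtained. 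For $V^+(t,x,p)\le V^+(t',x,p)+C\sqrt\delta$, the minimizer does nothing on $[t,t']$ and then plays, conditionally on $\mathcal H_{t,t'}$, a measurably selected $\varepsilon$-optimal family of strategies for the games started at $(t',X^{t,x}_{t'},p)$: if the maximizer stops inside $[t,t']$ the payoff is at most $\langle f(\cdot,X^{t,x}_\cdot),p\rangle\le\langle f(t,x),p\rangle+C\sqrt\delta\le V^+(t',x,p)+C\sqrt\delta$ (the last step using that, by stopping at $t'$, the maximizer already guarantees $V^+(t',x,p)\ge\langle f(t',x),p\rangle$ when $t'<T$), and otherwise a conditioning argument together with the $x$-Lipschitz bound gives the upper bound $\mathbb E[V^+(t',X^{t,x}_{t'},p)]+\varepsilon\le V^+(t',x,p)+C\sqrt\delta+\varepsilon$; the case $t'=T$ is handled the same way using $\langle f(T,\cdot),p\rangle\le\langle g,p\rangle\le\langle h(T,\cdot),p\rangle$. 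For the reverse inequality $V^+(t',x,p)\le V^+(t,x,p)+C\sqrt\delta$, take an $\varepsilon$-optimal $\hat\mu$ for Player~1 at $(t,x,p)$, transport it to $[t',T]$ by prepending an independent path on $[t,t']$ and truncating from below at $t'$, and check that the truncation (which alters only the $h_i$-terms, over an interval on which $(s,X^{t,x}_s)$ stays within $C\sqrt\delta$ of $(t,x)$ in $L^1$) and the mismatch between $X^{t',x}$ and the restriction of $X^{t,x}$ to $[t',T]$ each cost $\le C\sqrt\delta$; every $\sigma'\in\mathcal T(t')$ is then compared, via the natural inclusion $\mathcal T(t')\hookrightarrow\mathcal T(t)$, with the same $\sigma'$ played against $\hat\mu$ in the game at $(t,x,p)$.

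\emph{Main obstacle.} The genuine work is entirely in the time-regularity step and is largely of a bookkeeping nature: the canonical spaces $\mathcal C([t,T];\mathbb R^d)$ and $\mathcal C([t',T];\mathbb R^d)$ with their filtrations $\mathcal H_{t,\cdot}$ and $\mathcal H_{t',\cdot}$ are not literally the same object, so ``waiting on $[t,t']$'', ``restricting or extending a stopping time'' and ``transporting a strategy across $[t,t']$'' each have to be given a precise meaning, using the independence of Brownian increments and the flow property of $X$; moreover the passage from pointwise to measurable $\varepsilon$-optimal strategies $(t',y,p)\mapsto\mu^{\varepsilon}$ requires a measurable selection theorem. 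Once these identifications are set up the probabilistic estimates themselves are routine, and the H\"older exponent $\tfrac12$ is exactly the one dictated by Brownian scaling.
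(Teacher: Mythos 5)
Your proof is correct and largely parallels the paper's. The Lipschitz bounds in $p$ (with constant essentially $M$) and in $x$ (via the elementary $\sup\inf$ inequality and the standard SDE moment estimate) are exactly what the paper dismisses as ``straightforward and omitted.'' The substantial part, H\"older continuity in $t$, is where you diverge slightly from the paper in one of the two directions. The paper establishes $V^+(t,x,p)\le V^+(t',x,p)+C\sqrt{\delta}$ by transporting a \emph{single} $\epsilon$-optimal $\bar\mu\in(\mathcal{T}^r(t'))^I$ for the game at $(t',x,p)$ into the game at $(t,x,p)$ (it waits on $[t,t']$, so $\bar\mu\in(\mathcal{T}^r(t))^I$), taking a near-optimal response $\bar\nu\in\mathcal{T}^r(t)$, and setting $\hat\nu=\bar\nu\vee t'$; the payoff difference $J(t,x,p,\bar\mu,\bar\nu)-J(t',x,p,\bar\mu,\hat\nu)$ is then estimated directly: on $\{\bar\nu<t'\}$ by the closeness of $(\bar\nu,X^{t,x}_{\bar\nu})$ to $(t',x)$ together with $f_i\le h_i$, and on $\{\bar\nu\ge t'\}$ by the flow property $X^{t,x}_s=X^{t',X^{t,x}_{t'}}_s$ and the $x$-Lipschitz estimate. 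You instead have the minimizer wait on $[t,t']$ and then play a measurably-selected family $y\mapsto\mu^\varepsilon(t',y,p)$ of conditionally $\epsilon$-optimal strategies for the games at $(t',X^{t,x}_{t'},p)$, invoking Proposition 4.3 for the obstacle bound and the $x$-Lipschitz estimate after conditioning. This is a DPP-flavoured argument and, as you note, requires a measurable selection step that the paper's direct transport avoids entirely by tying $\bar\mu$ to the fixed initial point $x$ rather than to the random endpoint $X^{t,x}_{t'}$. Both routes yield the exponent $1/2$ and are sound; the paper's is a bit leaner precisely because it sidesteps selection, while yours foreshadows the subdynamic programming principle of Section 4.2. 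Your treatment of the reverse inequality $V^+(t',x,p)\le V^+(t,x,p)+C\sqrt\delta$, transporting $\hat\mu$ by prepending an independent path on $[t,t']$ and truncating at $t'$, is essentially the ``similar argument'' the paper alludes to for the case $V^+(t,x,p)<V^+(t',x,p)$.

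Two minor technical remarks that apply to both your argument and the paper's: the truncated time $\hat\nu=\bar\nu\vee t'$ (resp.\ your transported $\hat\mu$) depends on the Brownian increments over $[t,t']$ and so is not $\mathcal{H}_{t',\cdot}$-adapted in the strict sense; one absorbs this extra dependence into the randomization parameter using the independence of Brownian increments, and the paper silently does the same. Also, the event $\{\sigma<t'\}$ (resp.\ $\{\bar\nu<t'\}$) should be bounded using $\langle f(\cdot),p\rangle\le V^+$ pointwise before taking expectations, rather than separately bounding the two complementary events by $V^+(t',x,p)+C\sqrt\delta$, since $V^+$ need not be nonnegative; the sign is harmless either way once the decomposition is written carefully.
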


\begin{proof}
The proof of the Lipschitz continuity in $x$ and $p$ is straightforward and omitted here. For the H\"older continuity in time let $t,t'\in[0,T]$ with $t\leq t'$. Assume $V^+(t,x,p)>V^+(t',x,p)$. Then
\begin{equation*}
\begin{array}{rcl}
0&<& V^+(t,x,p)-V^+(t',x,p)\\
\ \\
&=&\inf_{\mu\in (\mathcal{T}^r(t))^I}\sup_{\nu \in \mathcal{T}^r(t)} J(t,x,p,\mu,\nu)-\inf_{\mu\in (\mathcal{T}^r(t'))^I}\sup_{\nu \in \mathcal{T}^r(t')} J(t',x,p,\mu,\nu).
\end{array}
\end{equation*}
Now for $\epsilon>0$ choose $\bar \mu\in(\mathcal{T}^{r}(t'))^I$ $\epsilon$-optimal for $V^+(t',x,p)$. Since $t\leq t'$ we have $\bar \mu\in(\mathcal{T}^{r}(t))^I$. Furthermore choose $\bar \nu\in\mathcal{T}^r(t)$ $\epsilon$-optimal for $\sup_{\nu\in\mathcal{T}^r(t)}J(t,x,p,\bar \mu, \nu)$ and define $\hat \nu\in\mathcal{T}^r(t')$
\begin{equation}
\hat \nu=\begin{cases}
 t' & \text{on } \{ \bar \nu<t'\}\\
\bar \nu & \text{on } \{ \bar \nu\geq t'\}. \\
\end{cases}
\end{equation}
Then we have 
\begin{equation}
\begin{array}{rcl}
V^+(t,x,p)-V^+(t',x,p)-2 \epsilon \leq J(t,x,p,\bar \mu,\bar \nu)-J(t',x,p,\bar \mu,\hat \nu).
\end{array}
\end{equation}
Since
\begin{equation*}
\begin{array}{l}
J(t,x,p,\bar \mu,\bar \nu)-J(t',x,p,\bar \mu,\hat \nu)\\
\ \\
=\mathbb{E}[(f_i(\bar \nu,X^{t,x}_{\bar \nu})-f_i(t',x))1_{\bar \nu<t'}]\\
\ \\
\ \ \ +\mathbb{E}\bigg[f_i(\bar \nu,X^{t,x}_{\bar \nu})-f_i(\bar \nu,X^{t',x}_{\bar \nu})1_{t'\leq\bar \nu<{\bar \mu_i},\bar \nu<T}+h_i(\bar \mu_i,X^{t,x}_{\bar \mu_i})-h_i(\bar \mu_i,X^{t',x}_{\bar \mu_i})1_{ t'\leq\bar \mu_i\leq\bar \nu,\bar \mu_i<T}\\
\ \ \  \ \ \ \  \ \ \ \ \ \ \ \ \ \ \ \ \ \ +(g_i(X^{t,x}_T)-g_i(X^{t',x}_T)) 1_{\bar \mu_i=\bar \nu=T}\bigg],
\end{array}
\end{equation*}
the claim follows with assumption (A) by standard estimates, since $\epsilon$ can be chosen arbitrarily small. The case $V^+(t,x,p)<V^+(t',x,p)$ follows by similar arguments.

\end{proof}

The following is a key property in games with incomplete information (see \cite{AuMaS}). Our proof follows closely \cite{CaRa2}.

\begin{prop}
For all $(t,x)\in[0,T]\times\mathbb{R}^d$ $V^+(t,x,p)$ and $V^-(t,x,p)$ are convex in $p$.
\end{prop}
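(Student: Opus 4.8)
The plan is to exploit the fact that in (3.2), the informed player's strategy is a vector $\mu = (\mu_i)_{i \in \{1,\ldots,I\}}$ — one randomized stopping time for each state of nature — while the uninformed player's strategy $\sigma$ and the dynamics $X^{t,x}$ do not depend on $i$. This is exactly the structure that produces convexity in $p$, as in \cite{AuMaS} and \cite{CaRa2}. Concretely, the map $p \mapsto J(t,x,p,\mu,\sigma) = \sum_{i=1}^I p_i J_i(t,x,\mu_i,\sigma)$ is \emph{linear} in $p$ for each fixed choice of $(\mu,\sigma)$, but the subtlety is that the optimal $\mu_i$ may be chosen state-dependently, so we must be careful about how the infimum interacts with convex combinations.

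First I would prove convexity of $V^+$. Fix $(t,x)$, take $p^1, p^2 \in \Delta(I)$ and $\lambda \in [0,1]$, and set $p = \lambda p^1 + (1-\lambda) p^2$. Given $\epsilon > 0$, choose $\bar\mu^1 \in (\mathcal{T}^r(t))^I$ that is $\epsilon$-optimal for $V^+(t,x,p^1)$ and $\bar\mu^2 \in (\mathcal{T}^r(t))^I$ that is $\epsilon$-optimal for $V^+(t,x,p^2)$. Now define a new strategy $\bar\mu \in (\mathcal{T}^r(t))^I$ componentwise by ``splicing'': in state $i$, Player 1 uses $\bar\mu^1_i$ with probability $\lambda p^1_i / p_i$ and $\bar\mu^2_i$ with probability $(1-\lambda) p^2_i / p_i$, using the extra randomization device (the second copy of $\lambda$ on $[0,1]$ in (2.7)). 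The point is that for any $\sigma \in \mathcal{T}(t)$,
\begin{equation*}
J(t,x,p,\bar\mu,\sigma) = \sum_{i=1}^I p_i\Big(\tfrac{\lambda p^1_i}{p_i} J_i(t,x,\bar\mu^1_i,\sigma) + \tfrac{(1-\lambda)p^2_i}{p_i} J_i(t,x,\bar\mu^2_i,\sigma)\Big) = \lambda J(t,x,p^1,\bar\mu^1,\sigma) + (1-\lambda) J(t,x,p^2,\bar\mu^2,\sigma),
\end{equation*}
so taking $\sup_\sigma$ on both sides gives $\sup_\sigma J(t,x,p,\bar\mu,\sigma) \le \lambda \sup_\sigma J(t,x,p^1,\bar\mu^1,\sigma) + (1-\lambda)\sup_\sigma J(t,x,p^2,\bar\mu^2,\sigma) \le \lambda V^+(t,x,p^1) + (1-\lambda) V^+(t,x,p^2) + \epsilon$. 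Since $\bar\mu$ is admissible, the left side bounds $V^+(t,x,p)$ from above, and letting $\epsilon \to 0$ yields convexity. One should handle the degenerate cases $p_i = 0$ separately (then the $i$-th component of $\bar\mu$ is irrelevant), or equivalently restrict attention to the support of $p$.

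For $V^-$, the argument is the dual one, using $\inf$ and a $\sup$ over $\nu$ on the outside, but the splicing still works because the uninformed player moves first in the $\sup\inf$ and his strategy $\nu$ does not see $i$: for a fixed $\nu$, $p \mapsto \inf_\mu J(t,x,p,\mu,\nu)$ is the infimum over state-separable $\mu$ of a linear function of $p$, and the same barycenter construction of $\bar\mu$ from $\epsilon$-optimal $\bar\mu^1, \bar\mu^2$ for the two $\inf$ problems gives $\inf_\mu J(t,x,p,\mu,\bar\nu) \le \lambda \inf_\mu J(t,x,p^1,\mu,\cdot) + \cdots$; then taking $\sup$ over the outer player requires a small additional step since the two inner problems are indexed by the \emph{same} $\nu$ — here I would instead start from a $\nu$ that is $\epsilon$-optimal for $V^-(t,x,p)$, bound $\inf_\mu J(t,x,p,\mu,\nu)$ below trivially, and use that $\inf_\mu J(t,x,p^1,\mu,\nu) \le V^-(t,x,p^1)$, $\inf_\mu J(t,x,p^2,\mu,\nu) \le V^-(t,x,p^2)$, which is the direction we want. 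The main obstacle is purely bookkeeping: making the splicing of randomized stopping times rigorous as a measurable map on $[0,1] \times \mathcal{C}([t,T];\mathbb{R}^d)$ and checking the probability weights $\lambda p^1_i/p_i$, $(1-\lambda)p^2_i/p_i$ sum to one and that the resulting object lies in $(\mathcal{T}^r(t))^I$; the probabilistic content is entirely the linearity of $J$ in $p$ together with the state-dependence of $\mu$, exactly as in \cite{CaRa2}.
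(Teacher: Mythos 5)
Your proposal is correct and follows essentially the same route as the paper: for $V^+$ you use the same barycentric splitting of randomized stopping times with weights $\lambda p^1_i/p_i$ and $(1-\lambda)p^2_i/p_i$ (this is exactly the construction the paper cites from \cite{CaRa2}), and for $V^-$ you ultimately arrive, after a slight detour, at the paper's observation that for each fixed $\nu$ the map $p\mapsto\inf_\mu J(t,x,p,\mu,\nu)=\sum_i p_i\inf_{\mu_i}J_i(t,x,\mu_i,\nu)$ is linear in $p$, so $V^-$ is a supremum of linear functions and hence convex.
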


\begin{proof}
That $V^-(t,x,p)$ is convex in $p$ can be easily seen by the following reformulation
\begin{equation}
\begin{array}{rcl}
V^-(t,x,p)&=&\sup_{\nu \in \mathcal{T}^r(t)} \inf_{\mu\in (\mathcal{T}^r(t))^I} J(t,x,p,\mu,\nu)\\
\ \\
	&=&\sup_{\nu \in \mathcal{T}^r(t)} \sum p_i \inf_{\mu\in \mathcal{T}^r(t)} J_i(t,x,p,\mu,\nu).
\end{array}
\end{equation}
To show that $V^+(t,x,p)$ is convex in $p$: fix  $(t,x)\in[0,T]\times\mathbb{R}^d$ and let $p,p^1,p^2\in\Delta(I)$, $\lambda\in[0,1]$ such that  $p=\lambda p^1+(1-\lambda) p^2$. \\
Furthermore choose $\mu^1\in (\mathcal{T}^r(t))^I$, $\mu^2\in (\mathcal{T}^r(t))^I$  $\epsilon$-optimal for $V^+(t,x,p^1)$, $V^+(t,x,p^2)$ respectively.
Then as in \cite{CaRa2} Proposition 2.1. one can construct a $\hat{\mu}\in (\mathcal{T}^r(t))^I$, such that for any $\nu\in\mathcal{T}^r(t)$ we have that
\begin{equation}
\sum_{i=1}^I p_i J_i(t,x,\hat \mu_i,\nu)=\lambda \sum_{i=1}^I p^1_i J_i(t,x,\mu^1_i,\nu)+(1-\lambda) \sum_{i=1}^I p^2_i J_i(t,x,\mu^2_i,\nu).
\end{equation}
Maximizing over $\nu\in\mathcal{T}^r(t)$ (4.4) yields then
\begin{equation*}
 V^+(t,x,p)\leq   \lambda V^+(t,x,p^1)+ (1-\lambda )V^+(t,x,p^2)+2\epsilon
\end{equation*}
and the result follows since $\epsilon$ can be chosen arbitrarily small.
\end{proof}

Furthermore from the very definition of $V^+,V^-$ we have the following:
\begin{prop}
For all $(t,x,p)\in[0,T]\times\mathbb{R}^d\times \Delta(I)$ we have that 
\begin{equation}
 \langle f(t,x),p\rangle \leq V^+(t,x,p)\leq  \langle h(t,x),p\rangle
\end{equation}
and
\begin{equation}
  \langle f(t,x),p\rangle \leq V^-(t,x,p)\leq  \langle h(t,x),p\rangle.
\end{equation}
\end{prop}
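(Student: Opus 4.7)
The plan is to obtain each bound by exhibiting an explicit immediate-stopping strategy for the appropriate player and reading off the value of $J$ directly from the indicator expression (2.7). No dynamic programming, viscosity theory, or regularity is needed; the proposition follows entirely from the definitions plus assumption (A)(ii).

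For the upper bounds, I would first dispose of the case $t<T$ by letting Player~1 use the constant stopping rule $\mu_i \equiv t$ in every state $i$. For such a choice, every $\nu\in\mathcal{T}^r(t)$ satisfies $\nu\geq t=\mu_i$, so the indicator $1_{\mu_i\leq\nu,\mu_i<T}$ equals $1$ almost surely while the other two indicators vanish. Thus $J_i(t,x,t,\nu)=h_i(t,x)$, whence $J(t,x,p,t,\nu)=\langle h(t,x),p\rangle$ independently of $\nu$. Taking the supremum over $\nu$ and then recognizing this as an admissible choice in the infimum defining $V^+$ yields $V^+(t,x,p)\leq\langle h(t,x),p\rangle$. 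The inequality $V^-\leq V^+$ (trivial from $\sup\inf\leq\inf\sup$) then propagates the bound to $V^-$.

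Symmetrically, for the lower bounds with $t<T$ I would let Player~2 play $\nu\equiv t$. For an arbitrary $\mu_i\in\mathcal{T}^r(t)$ the only non-trivial contributions to $J_i(t,x,\mu_i,t)$ come from the events $\{\mu_i=t\}$, giving $h_i(t,x)$, and $\{\mu_i>t\}$, giving $f_i(t,x)$. By the first inequality in (A)(ii), $f_i(t,x)\leq h_i(t,x)$, so in either case the integrand dominates $f_i(t,x)$, and hence $J_i(t,x,\mu_i,t)\geq f_i(t,x)$ pointwise in $\mu_i$. Summing against $p$, taking the infimum over $\mu$, and observing that $\nu\equiv t$ is admissible in the outer supremum of $V^-$ yields $V^-(t,x,p)\geq \langle f(t,x),p\rangle$, and again $V^+\geq V^-$ transfers the bound to $V^+$.

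The boundary case $t=T$ must be handled separately, but it is immediate: $\mathcal{T}^r(T)$ contains only the constant time $T$, so $\mu_i=\nu=T$ forces $J_i=g_i(x)$ and $V^\pm(T,x,p)=\langle g(x),p\rangle$; the framed inequalities then reduce to (2.6). I do not anticipate any real obstacle; the main point is simply to verify the case-by-case evaluation of the three indicators in (2.7) under the immediate-stopping strategies, which leans on the separation condition $f_i\leq h_i$ in assumption (A)(ii) to make the lower bound go through in the mixed event $\{\mu_i=t\}\cup\{\mu_i>t\}$.
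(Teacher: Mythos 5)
Your argument is correct and is exactly the elementary verification the paper has in mind: the paper states Proposition~4.3 as an immediate consequence of the definitions without writing out the details, and the details are precisely the immediate-stopping computations you give, namely $\mu_i\equiv t$ to force the payoff to $\langle h(t,x),p\rangle$ in (2.7), $\nu\equiv t$ together with $f_i\leq h_i$ to bound the payoff below by $\langle f(t,x),p\rangle$, and the inequality $V^-\leq V^+$ to transfer each bound to the other value function, with the $t=T$ case settled by (2.6).
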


\subsection{Subdynamic programming principle for $V^+$}

\begin{thm}
Let $(\bar t,\bar x,\bar p)\in[0,T]\times \mathbb{R}^d\times\Delta(I)$. Then for any $t\in[\bar t,T]$
\begin{equation}
\begin{array}{l}
V^+ (\bar t,\bar x,\bar p)\leq\inf_{\tau\in\mathcal{T}(\bar t,t)} \sup_{\sigma\in\mathcal{T}(\bar t,t)}\mathbb{E}\bigg[\left\langle \bar p,f(\sigma, X^{\bar t,\bar x}_{\sigma}) 1_{\sigma<\tau,\sigma<t}\right\rangle\\
 \ \ \ \ \ \ \ \ \ \ \ \ \ \ \ \ \ \ \ \ \ \ \ \ \ \ \ \ \ \ \ \ \ \ \ \ +\left\langle \bar p,h(\tau,X^{\bar t,\bar x}_{\tau} )1_{\tau\leq\sigma,\tau<t}\right\rangle+ V^+(t,X^{\bar t,\bar x}_t,\bar p) 1_{\tau=\sigma=t}\bigg].
\end{array}
\end{equation}
\end{thm}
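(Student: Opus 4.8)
The plan is to establish the sub-dynamic programming principle by a concatenation argument: we fix $\varepsilon > 0$ and, for each possible continuation point, splice together a strategy that is $\varepsilon$-optimal for Player 1 on $[\bar t, t]$ with a near-optimal strategy for the ``restarted'' game at time $t$. First I would show that the right-hand side of (4.8) is attained (up to $\varepsilon$) by some $\bar\tau \in \mathcal{T}(\bar t, t)$, and that for $\mathbb{P}_0$-almost every path there is a family of randomized stopping times $(\mu^{t,y}_i)_{i}$ that is $\varepsilon$-optimal for $V^+(t, y, \bar p)$; by the Lipschitz continuity of $V^+$ in $x$ (Proposition 4.1) and a measurable-selection argument one can choose these depending measurably on $y = X^{\bar t, \bar x}_t$. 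The combined strategy for Player 1 is: on the event $\{\bar\tau < t\}$, stop at $\bar\tau$ in state $i$; on the event $\{\bar\tau = t\}$, do not stop before $t$ and then follow $\mu^{t, X^{\bar t,\bar x}_t}_i$ restricted to $[t,T]$. Because Player 1 may use a different stopping rule in each state $i$, this concatenation stays inside $(\mathcal{T}^r(\bar t))^I$.

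Next I would bound $V^+(\bar t, \bar x, \bar p)$ from above by $\sup_{\nu \in \mathcal{T}^r(\bar t)} J(\bar t, \bar x, \bar p, \hat\mu, \nu)$ for this particular $\hat\mu$. Fix any $\nu \in \mathcal{T}^r(\bar t)$. I would split the payoff $J(\bar t, \bar x, \bar p, \hat\mu, \nu)$ according to whether $\nu < t$ or $\nu \ge t$. On $\{\nu < t\}$ (intersected with the appropriate comparison events between $\nu$ and $\hat\mu_i = \bar\tau$), the contribution is exactly the $f$- and $h$-terms appearing on the right of (4.8), evaluated with the stopping times $\bar\tau$ and $\nu$ restricted to $[\bar t, t]$. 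On $\{\nu \ge t\} \cap \{\bar\tau = t\}$ the game effectively restarts at time $t$ from state $X^{\bar t, \bar x}_t$ with the stopping times $\mu^{t, X^{\bar t, \bar x}_t}_i$ and the restriction of $\nu$ to $[t, T]$; by the tower property (conditioning on $\mathcal{H}_{\bar t, t}$) and the flow property $X^{\bar t, \bar x}_s = X^{t, X^{\bar t, \bar x}_t}_s$ for $s \ge t$, the conditional expectation of this piece equals $J(t, X^{\bar t, \bar x}_t, \bar p, \mu^{t, X^{\bar t,\bar x}_t}, \nu|_{[t,T]})$, which is $\le V^+(t, X^{\bar t, \bar x}_t, \bar p) + \varepsilon$ since the restriction of $\nu$ is an admissible choice for the uninformed player in the restarted game. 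Here it is convenient to use Remark~3.3, so that on $[\bar t,t]$ it suffices to let the uninformed player range over non-random $\sigma \in \mathcal{T}(\bar t,t)$, which is why the right-hand side of (4.8) has $\sigma$ rather than $\nu$.

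Putting the two pieces together and taking the supremum over $\nu$ gives
\[
V^+(\bar t, \bar x, \bar p) \le \sup_{\sigma \in \mathcal{T}(\bar t, t)} \mathbb{E}\Big[ \langle \bar p, f(\sigma, X^{\bar t,\bar x}_\sigma) 1_{\sigma < \bar\tau, \sigma < t}\rangle + \langle \bar p, h(\bar\tau, X^{\bar t,\bar x}_{\bar\tau}) 1_{\bar\tau \le \sigma, \bar\tau < t}\rangle + V^+(t, X^{\bar t,\bar x}_t, \bar p) 1_{\bar\tau = \sigma = t}\Big] + C\varepsilon,
\]
and since $\bar\tau$ was $\varepsilon$-close to the infimum on the right of (4.8) and $\varepsilon > 0$ is arbitrary, (4.8) follows. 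The main obstacle I expect is the measurability and uniform (in the restart point $y$) selection of the $\varepsilon$-optimal families $\mu^{t,y}$: one needs that $y \mapsto V^+(t, y, \bar p)$ is approximated by payoffs of randomized stopping times chosen in an $\mathcal{H}_{t,\cdot}$-adapted, jointly measurable way, which is handled by partitioning $\mathbb{R}^d$ into small balls, using the Lipschitz bound of Proposition~4.1 to control the error of using a single strategy on each ball, and pasting. A secondary technical point is to verify carefully that the pasted object $\hat\mu$ is a genuine element of $(\mathcal{T}^r(\bar t))^I$ — i.e. that $\{\hat\mu_i \le s\}$ is $\mathcal{H}_{\bar t, s}$-measurable for all $s$ — which follows because $\{\bar\tau = t\} \in \mathcal{H}_{\bar t, t} \subset \mathcal{H}_{\bar t, s}$ for $s \ge t$ and $\mu^{t, X^{\bar t, \bar x}_t}_i$ is $\mathcal{H}_{t,\cdot}$-adapted with the randomization on an independent copy of $[0,1]$.
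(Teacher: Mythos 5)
Your proposal is correct and follows essentially the same strategy as the paper's proof of Theorem~4.4: concatenate an $\varepsilon$-optimal stopping rule for Player~1 on $[\bar t,t]$ with $\varepsilon$-optimal (per-$i$, randomized) stopping rules for the restarted games at time $t$, handling the dependence on the restart point $X^{\bar t,\bar x}_t$ by a countable partition of $\mathbb R^d$ and the uniform Lipschitz bound of Proposition~4.1, then splitting the payoff at time $t$ and bounding the continuation piece by $V^+(t,X^{\bar t,\bar x}_t,\bar p)$. The only cosmetic difference is that you pick $\bar\tau$ directly from $\mathcal{T}(\bar t,t)$ as $\varepsilon$-optimal for the right-hand side, whereas the paper first passes through an auxiliary quantity with $\bar\mu\in(\mathcal{T}^r(\bar t,t))^I$ and only at the very end uses the inclusion of non-random, state-independent stopping times to reach the theorem's form; your route is marginally more direct but otherwise identical.
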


\begin{proof}
Fix $(\bar t,\bar x,\bar p)\in[0,T]\times\mathbb{R}^d\times\Delta(I)$. Let $A^j$ be a partition of $\mathbb{R}^d$ with $diam(A^j)\leq \delta$ for a $\delta>0$. For any $j\in\mathbb{N}$, choose a $y^j\in A^j$ and $\mu^j\in(\mathcal{T}^r(t))^I$ $\epsilon$-optimal for $V^+ (t,y^j,\bar p)$.\\
Furthermore choose $\bar{\mu}\in(\mathcal{T}^r(\bar t,t))^I$ to be $\epsilon$ optimal for
\begin{equation}
\begin{array}{rcl}
&&\inf_{\mu\in(\mathcal{T}^r(\bar t))^I}\sup_{\sigma\in\mathcal{T}(\bar t)} \sum_{i=1}^I \bar p_i \mathbb{E}\bigg[f_i(X^{\bar t,\bar x}_{\nu}) 1_{\nu<\mu_i,\nu<t}+h_i(X^{\bar t,\bar x}_{ \mu_i} )1_{\mu_i\leq\nu,\mu_i<t}\\
&&\ \ \ \ \ \ \ \ \ \ \ \ \ \ \ \ \ \ \ \ \ \ \ \ \ \ \ \ \ \ \ \ \ \ \ \ \ \ \ \ \ \ \ \ \ \ \ \ \ \ \ \ \ \ \ \ \ \ \ \ \ \ \ \ \ +V^+(t,X^{\bar t,\bar x}_t, \bar p) 1_{\mu_i=\nu=t}\bigg].
\end{array}
\end{equation}
We shall build with $\bar \mu$ and  $(\mu^j)_{j\in\mathbb{N}}$ a randomized stopping time $\hat\mu\in(\mathcal{T}^r(\bar t))^I$ in the following way\\
\begin{equation}
\hat \mu=\begin{cases}
\bar \mu & \text{on } \{\bar \mu<t\}\\
(\mu^j)_{j\in\mathbb{N}}& \text{on } \{\bar \mu=t, X^{\bar t,\bar x}_t\in A^j \}. \\
\end{cases}
\end{equation}
First note that for any $\sigma \in\mathcal{T}(\bar t)$.
\begin{equation}
\begin{array}{rcl}
&&\sum_{i=1}^I \bar p_i \mathbb{E}\bigg[f_i(X^{\bar t,\bar x}_{\sigma}) 1_{\sigma<\hat \mu_i,\sigma<T}+h_i(X^{\bar t,\bar x}_{\mu_i} )1_{\hat \mu_i\leq\sigma,\hat \mu_i<T} + g_i(X^{\bar t,\bar x}_T) 1_{\hat \mu_i=\sigma=T}\bigg]\\
\ \\
&&=\sum_{i=1}^I \bar p_i \mathbb{E}\bigg[f_i(X^{\bar t,\bar x}_{\sigma}) 1_{\sigma<\hat \mu_i,\sigma<t}+h_i(X^{\bar t,\bar x}_{\hat \mu_i} )1_{\hat \mu_i\leq\sigma,\hat \mu_i<t} \bigg]\\
\ \\
&&\ \ \ \ +\sum_{i=1}^I \bar p_i \mathbb{E}\bigg[f_i(X^{\bar t,\bar x}_{\sigma}) 1_{t\leq\sigma<\hat \mu_i,\sigma<T}+h_i(X^{\bar t,\bar x}_{\hat \mu_i} )1_{t\leq\hat \mu_i\leq\sigma,\hat \mu_i<T}\\
\ \\
&&\ \ \ \ \ \ \ \ \ \ \ \ \ \ \ \ \  \ \ \ \ \ \ \ \ \ \ \ \ \ \ \ \ \ \ \ \ \  \ \ \ \ \ \ \ \ \ \ \ \ \ \ \ \ \ \ \ \ \ \ \  +g_i(X^{\bar t,\bar x}_T) 1_{\hat \mu_i=\sigma=T} \bigg],
\end{array}
\end{equation}
while by the uniform Lipschitz continuity of the coefficients by (A) and of $V^+$ by Proposition 4.1.  we have for a generic constant $c>0$
\begin{equation}
\begin{array}{rcl}
&&\sum_{i=1}^I \bar p_i \mathbb{E}\bigg[f_i(X^{\bar t,\bar x}_{\sigma}) 1_{t\leq\sigma<\hat \mu_i,\sigma<T}+h_i(X^{\bar t,\bar x}_{\mu_i} )1_{t\leq\hat\mu_i\leq\sigma,\hat \mu_i<T}+g_i(X^{\bar t,\bar x}_T) 1_{\hat \mu_i=\sigma=T} \bigg]\\
\ \\
&&\leq\sum_{j\in\mathbb{N}}\sum_{i=1}^I \bar p_i \mathbb{E}\bigg[(f_i(X^{t,y^j}_{\sigma}) 1_{t\leq\sigma<\hat \mu_i,\sigma<T}+h_i(X^{t,y^j}_{\mu_i} )1_{t\leq\hat \mu_i\leq\sigma,\hat \mu_i<T}\\
&&\ \ \ \ \ \ \ \ \ \ \ \ \ \ \ \ \ \ \ \ \ \ \ \ \ \ \ \ \ \ \ \ \ \ \ \ \ \ \ \ \ \ +g_i(X^{t,y^j}_T) 1_{\hat \mu_i=\sigma=T} ) 1_{X^{\bar t,\bar x}_t\in A^j}\bigg]+c \delta\\
\ \\
&&\leq \mathbb{E}\bigg[V^+(t,X^{\bar t,\bar x}_t, \bar p) 1_{\hat \mu_i\geq t, \sigma\geq t} \bigg]+c \delta +\epsilon.
\end{array}
\end{equation}
Hence combining (4.8) with (4.10) and (4.11) and choosing $\hat\sigma\in\mathcal{T}(\bar t)$ to be $\epsilon$-optimal for $V^+ (\bar t,\bar x,\bar p)$ (3.3) we get
\begin{equation*}
\begin{array}{l}
V^+ (\bar t,\bar x,\bar p)\\
\ \\
\leq \inf_{\mu\in(\mathcal{T}^r(\bar t, t))^I} \sup_{\sigma\in\mathcal{T}(\bar t,t)}\sum_{i=1}^I \bar p_i \mathbb{E}\bigg[f_i(X^{\bar t,\bar x}_{\sigma}) 1_{\sigma<\mu_i,\sigma<t}+h_i(X^{\bar t,\bar x}_{\mu} )1_{\mu_i\leq\sigma,\hat\mu_i<t} \\
\ \\
\  \ \ \  \ \  \ \ \  \ \ \ \  \ \ \ \ \ \ \ \ \ \ \ \ \ \ \ \  \ \ \  \ \  \ \ \  \ \ \ \  \ \ \ \ \ \ \ \ \ \ \ \ \ + V^+(t,X^{\bar t,\bar x}_t,\bar p) 1_{\mu_i=\sigma=t}\bigg]+c \delta +2 \epsilon\\
\ \\
\leq \inf_{\tau\in\mathcal{T}(\bar t,t)} \sup_{\sigma\in\mathcal{T}(\bar t,t)}\mathbb{E}\bigg[\langle \bar p,f(X^{\bar t,\bar x}_{\sigma}) 1_{\sigma<\tau,\sigma<t}+h(X^{\bar t,\bar x}_{\tau} )1_{\tau\leq\sigma,\tau<t}\rangle \\
\ \\
\  \ \ \  \ \  \ \ \  \ \ \ \  \ \ \ \ \ \ \ \ \ \ \ \ \ \ \ \  \ \ \  \ \  \ \ \  \ \ \ \  \ \ \ \ \ \ \ \ \ \ \ \ \  + V^+(t,X^{\bar t,\bar x}_t,\bar p) 1_{\tau=\sigma=t}\bigg]+c \delta +2 \epsilon.
\end{array}
\end{equation*}
The claim follows since $\epsilon$ and $\delta$ can be chosen arbitrarily small.
\end{proof}
\ \\
\ \\
In contrast to the subdynamic programming for $V^+$ a superdynamic programming principle for $V^-$ can not be derived directly. As in \cite{CaRa2} we are led to consider the convex conjugate.

\subsection{Convex conjugate of $V^-$ and implications}

For $V^-:[0,T]\times\mathbb{R}^d\times\Delta(I)\rightarrow\mathbb{R}$ we define the convex conjugate $(V^-)^*:[0,T]\times\mathbb{R}^d\times\mathbb{R}^I\rightarrow\mathbb{R}$ as
\begin{equation}
	(V^-)^*(t,x,\hat p)=\sup_{p\in\Delta(I)}\{\langle\hat p, p\rangle-V^-(t,x,p)\}.
\end{equation}
Let $\phi:[0,T]\times\mathbb{R}^d\times\Delta(I)\rightarrow \mathbb{R}$ such that $V^--\phi$ has a strict global minimum at $(\bar{t},\bar x,\bar p)\in[0,T)\times\mathbb{R}^d\times\Delta(I)$ with $V^-(\bar{t},\bar x,\bar p)-\phi(\bar{t},\bar x,\bar p)=0$ and
\begin{equation}
 \lambda_{\min}\left(p,\frac{\partial ^2 \phi}{\partial p^2}\right)>0.
\end{equation}
Then by \cite{Ca} there exists a $\delta, \eta>0$ such that for all $p\in\Delta(I)$, $(t,x)\in [\bar t,\bar t+\eta]\times B_\eta(\bar x)$
\begin{equation}
V^-(t,x,p)\geq\phi(t,x,\bar p)+\langle \frac{\partial\phi}{\partial p} (t,x,\bar p),p-\bar p\rangle+ \delta |p-\bar p|^2.
\end{equation}
Consequently, for any $\hat p\in\mathbb{R}^I$
\begin{equation}
\begin{array}{l}
(V^-)^*(t,x,\hat p)=\sup_{p\in\Delta(I)}\{\langle\hat p, p\rangle-V^-(t,x,p)\}\\
\ \\
\ \ \ \leq -\phi(t,x,\bar p)+\sup_{p\in\Delta(I)}\{\langle\hat p, p\rangle-\langle \frac{\partial\phi}{\partial p} (t,x,\bar p),p-\bar p\rangle+ \delta |p-\bar p|^2\}\\
\ \\
\ \ \ \leq -\phi(t,x,\bar p)+\langle\hat p, \bar p\rangle+\frac{1}{4\delta} | \frac{\partial\phi}{\partial p} (t,x,\bar p)-\hat p|^2,
\end{array}
\end{equation}
which implies by choosing $\hat p=\frac{\partial\phi}{\partial p} (t,x,\bar p)$
\begin{equation*}
\begin{array}{rcl}
(V^-)^*(t,x,\frac{\partial\phi}{\partial p} (t,x,\bar p))
&\leq& -\phi(t,x,\bar p)+\langle\frac{\partial\phi}{\partial p} (t,x,\bar p), \bar p\rangle
\end{array}
\end{equation*}
and for $(t,x)=(\bar t,\bar x)$ with (4.15)
\begin{equation}
\begin{array}{rcl}
(V^-)^*(\bar t,\bar x,\frac{\partial\phi}{\partial p} (\bar t,\bar x,\bar p))
&=& -V(\bar t,\bar x,\bar p)+\langle\frac{\partial\phi}{\partial p} (\bar t,\bar x,\bar p), \bar p\rangle\\
&=&-\phi(\bar t,\bar x,\bar p)+\langle\frac{\partial\phi}{\partial p} (\bar t,\bar x,\bar p), \bar p\rangle.
\end{array}
\end{equation}
Note that (4.15) and (4.16) imply in particular:
\begin{lem}
If there is a test function $\phi:[0,T]\times\mathbb{R}^d\times\Delta(I)\rightarrow \mathbb{R}$ such that $V^--\phi$ has a strict global minimum at $(\bar{t},\bar x,\bar p)\in(0,T)\times\mathbb{R}^d\times\Delta(I)$ with $V^-(\bar{t},\bar x,\bar p)-\phi(\bar{t},\bar x,\bar p)=0$ and
\begin{equation}
 \lambda_{\min}\left(p,\frac{\partial ^2 \phi}{\partial p^2}\right)>0,
\end{equation}
then  $\frac{\partial(V^-)^*}{\partial p}$ exists at $(\bar t,\bar x,\hat p)$ and is equal to $\bar p$. 
 \end{lem}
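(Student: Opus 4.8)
The plan is to deduce the lemma directly from the estimates (4.15) and (4.16) established just above, together with the convexity of $V^-$ in $p$ (Proposition 4.5). Throughout, write $\hat p=\tfrac{\partial\phi}{\partial p}(\bar t,\bar x,\bar p)$ for the point appearing in the statement, and consider the function $q\mapsto(V^-)^*(\bar t,\bar x,q)$ on $\mathbb{R}^I$. It is everywhere finite, since $V^-$ is bounded and $\Delta(I)$ is compact, and it is convex, being a supremum of affine functions of $q$; recall that such a function is differentiable at a point if and only if its subdifferential there is a singleton.

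First I would specialise (4.15) to $(t,x)=(\bar t,\bar x)$. This gives, for every $q\in\mathbb{R}^I$,
\[
(V^-)^*(\bar t,\bar x,q)\ \le\ \psi(q):=-\phi(\bar t,\bar x,\bar p)+\langle q,\bar p\rangle+\tfrac{1}{4\delta}\,|\hat p-q|^2,
\]
so $(V^-)^*(\bar t,\bar x,\cdot)$ lies everywhere below the smooth quadratic $\psi$. On the other hand (4.16) says that at $q=\hat p$ one has $(V^-)^*(\bar t,\bar x,\hat p)=-\phi(\bar t,\bar x,\bar p)+\langle\hat p,\bar p\rangle=\psi(\hat p)$, since the quadratic term of $\psi$ vanishes at $\hat p$. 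Hence $(V^-)^*(\bar t,\bar x,\cdot)$ is touched from above by $\psi$ at $\hat p$.

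Then I would invoke the elementary fact that a finite convex function touched from above at a point by a function differentiable there is itself differentiable there, with the same gradient. Concretely, if $s$ is a subgradient of $(V^-)^*(\bar t,\bar x,\cdot)$ at $\hat p$, then for all $q$
\[
\psi(q)\ \ge\ (V^-)^*(\bar t,\bar x,q)\ \ge\ (V^-)^*(\bar t,\bar x,\hat p)+\langle s,q-\hat p\rangle\ =\ \psi(\hat p)+\langle s,q-\hat p\rangle ,
\]
and differentiability of $\psi$ at $\hat p$ forces $s=\nabla\psi(\hat p)$. A one-line computation gives $\nabla\psi(\hat p)=\bar p$, the gradient of the quadratic part being zero at $\hat p$. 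Thus the subdifferential of $(V^-)^*(\bar t,\bar x,\cdot)$ at $\hat p$ is the singleton $\{\bar p\}$, so $\tfrac{\partial(V^-)^*}{\partial p}(\bar t,\bar x,\hat p)$ exists and equals $\bar p$, as claimed.

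I do not expect a genuine obstacle here: all the substantive work sits in (4.14)--(4.16), which rely on the local semiconvexity result of \cite{Ca} (applied where the test function $\phi$ with $\lambda_{\min}(p,\partial^2\phi/\partial p^2)>0$ touches $V^-$) and on convexity of $V^-$. The only points that deserve a word of care are that $(V^-)^*(\bar t,\bar x,\cdot)$ is everywhere finite, so that the subgradient characterisation of differentiability is valid on all of $\mathbb{R}^I$, and that the derivative asserted in the lemma is the one in the dual variable.
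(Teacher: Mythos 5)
Your proof is correct and fills in precisely the details the paper leaves implicit: the paper simply asserts that (4.15) and (4.16) "imply in particular" the lemma, and your subgradient/touching argument (finite convex function touched from above at a point by a smooth function is differentiable there with the same gradient) is the natural and correct way to make that implication rigorous, so it is essentially the same approach.
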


\subsection{Subdynamic programming principle for $(V^-)^*$}
Instead of a superdynamic programming principle for $V^-$ we can with regard to (4.16) show a subdynamic programming principle for $(V^-)^*$. To that end the following reformulation of $(V^-)^*$ will be useful.

\begin{prop}
For any $(t,x, \hat p)\in[0,T]\times \mathbb{R}^d\times\mathbb{R}^I$ we have that
\begin{equation}
\begin{array}{rcl}
	(V^-)^*(t,x,\hat p)
	&=&\inf_{\nu\in\mathcal{T}^r(t)}\sup_{\mu\in\mathcal{T}^r(t)} \max_{i\in\{1,\dots, I\}}\left\{\hat p_i-J_i(t,x,\mu,\nu)\right\}.
\end {array}
\end{equation}
\end{prop}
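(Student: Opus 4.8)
The plan is to establish the identity in Proposition 4.11 by unfolding the definition of the convex conjugate and exchanging the order of the supremum over $p\in\Delta(I)$ with the $\sup$–$\inf$ over stopping times that defines $V^-$. Starting from
\[
(V^-)^*(t,x,\hat p)=\sup_{p\in\Delta(I)}\Big\{\langle\hat p,p\rangle-\sup_{\nu\in\mathcal{T}^r(t)}\inf_{\mu\in(\mathcal{T}^r(t))^I}\sum_{i=1}^I p_iJ_i(t,x,\mu_i,\nu)\Big\},
\]
I would first rewrite, as in the proof of Proposition 4.4, the inner $\inf$ over $(\mathcal{T}^r(t))^I$ as a coordinatewise infimum $\sum_i p_i\inf_{\mu_i\in\mathcal{T}^r(t)}J_i$, since once $\nu$ is fixed the $i$-th summand only involves $\mu_i$. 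That turns $-V^-(t,x,p)$ into an infimum over $\nu$ of the concave (in $p$) function $-\sum_i p_i\inf_{\mu_i}J_i(t,x,\mu_i,\nu)$.

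The heart of the argument is then a minimax exchange. I want to swap the $\sup_{p\in\Delta(I)}$ with the $\inf_{\nu\in\mathcal{T}^r(t)}$ coming from $-V^-$. For each fixed $\nu$, the map $p\mapsto\langle\hat p,p\rangle-\sum_i p_i\inf_{\mu_i}J_i(t,x,\mu_i,\nu)$ is affine in $p$ and hence concave and continuous on the compact convex set $\Delta(I)$; the dependence on $\nu$ should be handled either by Sion's minimax theorem on an appropriate convexification of $\mathcal{T}^r(t)$ (randomized stopping times already form a convex set in the sense that one can mix two of them via the auxiliary randomization device), or — more elementarily — by noting that one only needs one inequality nontrivially and the other is the easy direction. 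Concretely, for each $\nu$,
\[
\sup_{p\in\Delta(I)}\Big\{\langle\hat p,p\rangle-\sum_{i=1}^I p_i\inf_{\mu_i}J_i(t,x,\mu_i,\nu)\Big\}
=\max_{i\in\{1,\dots,I\}}\Big\{\hat p_i-\inf_{\mu_i\in\mathcal{T}^r(t)}J_i(t,x,\mu_i,\nu)\Big\}
=\max_{i}\sup_{\mu_i}\{\hat p_i-J_i(t,x,\mu_i,\nu)\},
\]
because the supremum of an affine function over the simplex is attained at a vertex. Taking $\inf_{\nu}$ on both sides, the left-hand side becomes $(V^-)^*(t,x,\hat p)$ provided the exchange $\sup_p\inf_\nu=\inf_\nu\sup_p$ is justified, and the right-hand side becomes exactly $\inf_{\nu\in\mathcal{T}^r(t)}\sup_{\mu\in\mathcal{T}^r(t)}\max_i\{\hat p_i-J_i(t,x,\mu,\nu)\}$, which is the claimed formula.

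The main obstacle I anticipate is precisely this interchange of $\sup_{p\in\Delta(I)}$ and $\inf_{\nu\in\mathcal{T}^r(t)}$: the set $\mathcal{T}^r(t)$ is not obviously compact in a useful topology, so a direct appeal to a minimax theorem requires care. I would circumvent this by following the strategy of \cite{CaRa2} (and its analogue for the differential-game case in \cite{CaRa1}): the ``$\leq$'' inequality in the final identity is immediate, since for each $\nu$ and each $p$ one has $\langle\hat p,p\rangle-\sum_i p_i\inf_{\mu_i}J_i\le\max_i\sup_{\mu_i}\{\hat p_i-J_i\}$; for the reverse inequality one argues directly: given $\varepsilon>0$ pick $\nu$ that is $\varepsilon$-optimal for the right-hand side, and for that $\nu$ choose the maximizing index $i^\star$ and an $\varepsilon$-optimal $\mu_{i^\star}$; then testing $(V^-)^*$ against the vertex $p=e_{i^\star}\in\Delta(I)$ (so that $\langle\hat p,p\rangle=\hat p_{i^\star}$ and $\inf_{\mu\in(\mathcal{T}^r(t))^I}\sum_ip_iJ_i=\inf_{\mu_{i^\star}}J_{i^\star}$) gives $(V^-)^*(t,x,\hat p)\ge\hat p_{i^\star}-\inf_{\mu_{i^\star}}J_{i^\star}(t,x,\mu_{i^\star},\nu)$, which up to $2\varepsilon$ dominates the right-hand side; here one uses that $V^-(t,x,e_{i^\star})=\sup_\nu\inf_{\mu_{i^\star}}J_{i^\star}(t,x,\mu_{i^\star},\nu)\ge\inf_{\mu_{i^\star}}J_{i^\star}(t,x,\mu_{i^\star},\nu)$ for the specific $\nu$ chosen. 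Letting $\varepsilon\to0$ closes the argument without ever needing compactness of $\mathcal{T}^r(t)$.
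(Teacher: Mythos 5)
Your overall strategy is genuinely different from the paper's: you try to prove the identity directly by swapping $\sup_{p\in\Delta(I)}$ and $\inf_{\nu\in\mathcal{T}^r(t)}$ in $(V^-)^*$, whereas the paper never touches $(V^-)^*$ directly. Instead, calling $w$ the right-hand side of (4.18), the paper shows $w$ is convex, computes its Fenchel transform $w^*$ by elementary algebra (swapping a $\sup_{\hat p}$ with $\sup_\nu$, which always commute, and reading off the maximizing $\hat p_j = \inf_\mu J_j$), obtains $w^* = V^-$, and then concludes $w = w^{**} = (V^-)^*$ by biduality of convex functions. That route completely sidesteps the minimax issue you correctly identified as the crux.

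Unfortunately your workaround for the hard direction contains an inequality that points the wrong way, so there is a genuine gap. You need $(V^-)^*(t,x,\hat p)\ge\inf_\nu\max_i\{\hat p_i-\inf_\mu J_i(\cdot,\nu)\}$. Testing the convex conjugate at $p=e_{i^\star}$ gives
\[
(V^-)^*(t,x,\hat p)\ \ge\ \hat p_{i^\star}-V^-(t,x,e_{i^\star})\ =\ \hat p_{i^\star}-\sup_{\nu'}\inf_\mu J_{i^\star}(t,x,\mu,\nu'),
\]
with the supremum over \emph{all} $\nu'$, not just your $\varepsilon$-optimal $\nu$. Since $\sup_{\nu'}\inf_\mu J_{i^\star}\ge\inf_\mu J_{i^\star}(\cdot,\nu)$, this lower bound on $(V^-)^*$ is \emph{smaller} than $\hat p_{i^\star}-\inf_\mu J_{i^\star}(\cdot,\nu)$, which is what would be needed to dominate the right-hand side; your own final remark, $V^-(t,x,e_{i^\star})\ge\inf_\mu J_{i^\star}(\cdot,\nu)$, is exactly the inequality in the wrong direction. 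The $\nu$ that is $\varepsilon$-optimal for the saddle $\inf_\nu\sup_p$ has no reason to be near-optimal for $\sup_{\nu'}$ at the fixed vertex $e_{i^\star}$, so the two quantities cannot be identified up to $O(\varepsilon)$. To repair this you would need an actual minimax argument (e.g.\ Sion) on the $(p,\nu)$ pair, which — as you anticipate — is delicate here because $\mathcal{T}^r(t)$ has no obvious compactness; the paper's detour through $w^*$ and biduality is precisely how it avoids having to prove such a theorem.
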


We recall:
\begin{equation*}
		\begin{array}{rcl}
        			J_i(t,x,\mu,\nu)&=&\mathbb{E}\bigg[f_i(\nu,X^{t,x}_\nu)1_{\nu<\mu,\nu<T}+h_i(\mu,X^{t,x}_\mu)1_{ \mu\leq\nu,\mu<T}+ g_i(X^{t,x}_T) 1_{\mu=\nu=T}\bigg].
		\end{array}
		\end{equation*}
		
		\begin{rem}
Again as in Remark 2.1. we can rewrite (4.18) as
\begin{equation}
\begin{array}{rcl}
	(V^-)^*(t,x,\hat p)
	&=&\inf_{\nu\in\mathcal{T}^r(t)}\sup_{\tau\in\mathcal{T}(t)} \max_{i\in\{1,\dots, I\}}\left\{\hat p_i-J_i(t,x,\tau,\nu)\right\}.
\end {array}
\end{equation}
\end{rem}

\begin{proof}
Denote $w(t,x,\hat p)$ the right hand side of (4.18). Since $V^-$ is convex in $p$ we have that $((V^-)^*)^*=V^-$. Hence it suffices to prove $w^*=V^-$.\\
First we show convexity of $w$ in $\hat p$. To that end let $\hat p, \hat p^1,\hat p^2\in\mathbb{R}^I$, $\lambda\in(0,1)$ such that $\hat p=\lambda \hat p^1+ (1-\lambda)\hat p^2$. Choose $\hat \nu^1,\hat \nu^2$ $\epsilon$-optimal for $w(t,x,\hat p^1)$, $w(t,x,\hat p^2)$ respectively. Furthermore define as in \cite{CaRa2} a $\hat \nu\in\mathcal{T}^r(t)$ such that for all $\mu\in\mathcal{T}^r(t)$
\begin{equation}
	J_i(t,x,\mu,\hat \nu)=\lambda J_i(t,x,\mu,\hat \nu^1)+(1-\lambda)J_i(t,x,\mu,\hat \nu^2).
\end{equation}
Then for all $\mu\in\mathcal{T}^r(t)$
\begin{equation*}
\begin{array}{rcl}
&&\max_{i\in\{1,\dots, I\}}\left\{\hat p_i-J_i(t,x,\mu,\hat \nu)\right\}\\
\ \\
&&=\max_{i\in\{1,\dots, I\}}\left\{\lambda(\hat p_i-J_i(t,x,\mu,\hat \nu^1))+(1-\lambda)(\hat p_i-J_i(t,x,\mu,\hat \nu^2))\right\}\\
\ \\
&&\leq \lambda \max_{i\in\{1,\dots, I\}}\left\{\hat p_i-J_i(t,x,\mu,\hat \nu^1)\right\}+(1-\lambda)\max_{i\in\{1,\dots, I\}}\left\{\hat p_i-J_i(t,x,\mu,\hat \nu^2)\right\}\\
\ \\
&&\leq \lambda w(t,x,\hat p^1)+(1-\lambda) w(t,x,\hat p^2).
\end{array}
\end{equation*}
The convexity follows then by choosing $\hat \mu$ $\epsilon$-optimal for $w(t,x,\hat p)$.\\
Next we calculate $w^*$. By definition of the convex conjugate we have
\begin{equation*}
\begin{array}{l}
w^*(t,x,p)\\
\ \\
\ \ \ =\sup_{\hat p\in\mathbb{R}^I}\left\{ \langle \hat p,p\rangle+\sup_{\nu\in\mathcal{T}^r(t)}\inf_{\mu\in\mathcal{T}^r(t)} \min_{j\in\{1,\dots, I\}}\left\{J_j(t,x,\mu,\nu)-\hat p_j\right\}\right\} \\
	\ \\
\ \ \ =\sup_{\nu\in\mathcal{T}^r(t)}\sup_{\hat p\in\mathbb{R}^I}\left\{\sum_{i=1}^I p_i  \min_{j\in\{1,\dots, I\}} \inf_{\mu\in\mathcal{T}^r(t)}\left\{ J_j(t,x,\mu,\nu)+\hat p_i-\hat p_j\right\}\right\},
\end{array}
\end{equation*}
where the supremum is attained for $\hat p_j=\inf_{\mu\in\mathcal{T}^r(t)} J_j(t,x,\mu,\nu)$.
Hence
\begin{equation*}
w^*(t,x,p)=\sup_{\nu\in\mathcal{T}^r(t)}\left\{\sum_{i=1}^I p_i \inf_{\mu\in\mathcal{T}^r(t)}J_i(t,x,\mu,\nu)\right\}=\sup_{\nu\in\mathcal{T}^r(t)}\inf_{\mu\in\mathcal({T}^r(t))^I}\sum_{i=1}^I p_i J_i(t,x,\mu,\nu).
\end{equation*}
\end{proof}

As a direct consequence of (4.18) we have:
\begin{prop}
For any $(t,x, \hat p)\in[0,T]\times \mathbb{R}^d\times\mathbb{R}^I$ we have that
\begin{equation}
\begin{array}{rcl}
	 \max_{i\in\{1,\dots, I\}}\left\{\hat p_i-h_i(x)\right\}\leq(V^-)^*(t,x,\hat p)\leq \max_{i\in\{1,\dots, I\}}\left\{\hat p_i-f_i(x)\right\}.
\end {array}
\end{equation}
\end{prop}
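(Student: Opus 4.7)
The plan is to exploit the characterization (4.18) of $(V^-)^*$ as the value of an inf-sup over randomized stopping times, and to prove each inequality by a judicious deterministic choice of one of the two stopping times at the trivial time $t$.

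For the upper bound, I would take $\nu \equiv t$ in the outer infimum. Assuming $t<T$, for any $\mu \in \mathcal{T}^r(t)$ the indicators in the definition of $J_i$ reduce (using $\mu \geq t$ a.s.) to $\mathbf{1}_{\nu<\mu,\nu<T} = \mathbf{1}_{\mu>t}$, $\mathbf{1}_{\mu\leq\nu,\mu<T} = \mathbf{1}_{\mu=t}$, and $\mathbf{1}_{\mu=\nu=T}=0$. Thus
\begin{equation*}
J_i(t,x,\mu,t) = f_i(t,x)\,\mathbb{P}(\mu>t) + h_i(t,x)\,\mathbb{P}(\mu=t) \geq f_i(t,x),
\end{equation*}
where the inequality uses assumption (A)(ii), $f_i\leq h_i$. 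Consequently $\hat p_i - J_i(t,x,\mu,t) \leq \hat p_i - f_i(t,x)$ for each $i$, and taking $\max_i$ and then $\sup_\mu$ gives the upper bound. The boundary case $t=T$ forces $\mu = \nu = T$, so $J_i = g_i(x)$, and the bound follows from $f_i(T,x) \leq g_i(x)$.

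For the lower bound, I would symmetrically take $\mu \equiv t$ inside the inner supremum. For $t<T$ and any $\nu \in \mathcal{T}^r(t)$, the indicators collapse (using $\nu \geq t$) to $\mathbf{1}_{\nu<\mu,\nu<T}=0$, $\mathbf{1}_{\mu\leq\nu,\mu<T}=1$, $\mathbf{1}_{\mu=\nu=T}=0$, so
\begin{equation*}
J_i(t,x,t,\nu) = h_i(t,x).
\end{equation*}
Therefore $\sup_\mu \max_i\{\hat p_i - J_i(t,x,\mu,\nu)\} \geq \max_i\{\hat p_i - h_i(t,x)\}$ for every $\nu$, and taking $\inf_\nu$ yields the lower bound. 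The case $t=T$ again forces $J_i = g_i(x) \leq h_i(T,x)$ and the inequality persists.

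There is essentially no obstacle here; the only thing to check carefully is the bookkeeping of the indicator functions at the boundary $\mu=t$ or $\nu=t$ (in particular that $\mathcal{T}^r(t)$ contains the constant stopping time $t$) and the degenerate terminal case $t=T$. Both inequalities then follow immediately from the pointwise ordering $f_i \leq h_i$ and $f_i(T,\cdot) \leq g_i \leq h_i(T,\cdot)$ built into assumption (A)(ii).
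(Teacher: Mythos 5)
Your proposal is correct and takes essentially the same approach the paper intends: the paper states Proposition~4.8 as a ``direct consequence of (4.18)'' without spelling out the argument, and substituting the constant stopping time $\nu\equiv t$ for the upper bound (resp.\ $\mu\equiv t$ for the lower bound), then invoking $f_i\leq h_i$ and the terminal ordering $f_i(T,\cdot)\leq g_i\leq h_i(T,\cdot)$, is exactly the intended elementary derivation. Your bookkeeping of the indicators and the degenerate case $t=T$ is accurate; the only cosmetic point is that the paper's display writes $f_i(x),h_i(x)$ where $f_i(t,x),h_i(t,x)$ are clearly meant, and you handle this correctly.
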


Furthermore we have with (4.18) as in Proposition 4.1.: 
\begin{prop}
$(V^-)^*(t,x,\hat p)$ is uniformly Lipschitz continuous in $x$ and $\hat p$ and H\"older continuous in $t$.
\end{prop}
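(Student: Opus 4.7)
The plan is to use the inf-sup-max representation of Proposition 4.6,
\[
(V^-)^*(t,x,\hat p) = \inf_{\nu\in\mathcal{T}^r(t)}\sup_{\mu\in\mathcal{T}^r(t)} \max_{i\in\{1,\ldots,I\}}\bigl\{\hat p_i - J_i(t,x,\mu,\nu)\bigr\},
\]
and to transfer regularity from the integrand to $(V^-)^*$ via the standard fact that $\inf\sup$ preserves uniform Lipschitz (resp.\ H\"older) constants. For the dependence in $\hat p$, the map $\hat p\mapsto\max_i\{\hat p_i-J_i(t,x,\mu,\nu)\}$ is $1$-Lipschitz in $\|\cdot\|_\infty$ uniformly in $(\mu,\nu,i)$, hence the same constant controls $(V^-)^*$. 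For the dependence in $x$, the standard SDE estimate under (A)(i), $\mathbb{E}[\sup_{s\in[t,T]}|X^{t,x}_s - X^{t,x'}_s|^2]\leq C|x-x'|^2$, combined with the Lipschitz continuity of $f_i,h_i,g_i$ from (A)(ii), gives $|J_i(t,x,\mu,\nu)-J_i(t,x',\mu,\nu)|\leq C|x-x'|$ uniformly in $(i,\mu,\nu)$; this bound then passes to $(V^-)^*$ through the $\max_i$ and the $\inf\sup$.

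For the H\"older continuity in $t$, I would closely imitate the proof of Proposition 4.1. Fix $t\leq t'$ and suppose first $(V^-)^*(t,x,\hat p)>(V^-)^*(t',x,\hat p)$. Since $\mathcal{T}^r(t')$ is naturally included in $\mathcal{T}^r(t)$, an $\epsilon$-optimal $\bar\nu\in\mathcal{T}^r(t')$ for the outer infimum at $(t',x,\hat p)$ can also be used at time $t$. Pick $\bar\mu\in\mathcal{T}^r(t)$ which is $\epsilon$-optimal for $\sup_\mu\max_i\{\hat p_i-J_i(t,x,\mu,\bar\nu)\}$, and set $\tilde\mu:=\bar\mu\vee t'\in\mathcal{T}^r(t')$, in analogy with the construction (4.1). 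Using that $a\mapsto\max_i(\hat p_i-a_i)$ is $1$-Lipschitz, one gets
\[
(V^-)^*(t,x,\hat p)-(V^-)^*(t',x,\hat p)\leq \max_{i}\bigl|J_i(t,x,\bar\mu,\bar\nu)-J_i(t',x,\tilde\mu,\bar\nu)\bigr|+2\epsilon,
\]
and the right-hand side is bounded by $C|t-t'|^{1/2}$ via the standard SDE estimates, splitting according to the events $\{\bar\nu<t'\}$, $\{t'\leq\bar\nu<\bar\mu\}$, $\{t'\leq\bar\mu\leq\bar\nu,\bar\mu<T\}$ and $\{\bar\mu=\bar\nu=T\}$, exactly as in the computation following (4.2). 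The reverse case $(V^-)^*(t,x,\hat p)<(V^-)^*(t',x,\hat p)$ is symmetric: one truncates an $\epsilon$-optimal $\bar\nu\in\mathcal{T}^r(t)$ by $\bar\nu\vee t'$ and uses an $\epsilon$-optimal $\bar\mu\in\mathcal{T}^r(t')\subset\mathcal{T}^r(t)$.

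The main obstacle is the piecewise bookkeeping in the H\"older-in-$t$ estimate, since $\bar\mu$, $\tilde\mu$ and $\bar\nu$ live on different intervals and $J_i$ is a sum of indicator contributions on incompatible events; the needed manipulations are identical to those in Proposition 4.1. An additional feature here is that the estimate must hold uniformly in $i\in\{1,\ldots,I\}$, but this is harmless because by (A) the Lipschitz and boundedness constants of $(f_i,h_i,g_i)$ are uniform in $i$ and the outer $\max_i$ does not worsen the Lipschitz/H\"older constants.
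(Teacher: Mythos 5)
Your proposal is correct and takes essentially the same approach as the paper: the paper's own proof of this proposition is just the one-line remark that it follows from the representation (4.18) "as in Proposition 4.1," and your argument fills in exactly that transfer of regularity (inf-sup preserves Lipschitz/Hölder moduli, the outer $\max_i$ is $1$-Lipschitz, and the H\"older-in-$t$ estimate mimics the truncation construction from (4.1)).
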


Now we can establish a subdynamic programming principle.

\begin{thm}
Let $(\bar t,\bar x,\hat p)\in[0,T]\times \mathbb{R}^d\times\mathbb{R}^I$. Then for all $t\in[\bar t,T]$
\begin{equation}
\begin{array}{l}
(V^-)^*(\bar t,\bar x,\hat p)\\
\ \\
\ \ \ \leq \inf_{\sigma\in\mathcal{T}(\bar t,t)}\sup_{\tau\in\mathcal{T}(\bar t,t)} \mathbb{E}\bigg[ \max_{i\in\{1,\dots, I\}}\{\hat p_i-f_i(X^{\bar t,\bar x}_{\nu}) \}1_{\sigma<\tau,\sigma<t}\\
\ \\
\ \ \ \ \ \ \ \ \ \ + \max_{i\in\{1,\dots, I\}} \{\hat p_i-h_i(X^{\bar t,\bar x}_{ \tau} )\}1_{ \tau\leq\sigma,\tau<t}+(V^-)^*(t,X^{\bar t,\bar x}_t, \hat p) 1_{\tau=\sigma=t}\bigg].
\end{array}
\end{equation}
\end{thm}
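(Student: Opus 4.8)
**Proof proposal for Theorem 4.3 (subdynamic programming principle for $(V^-)^*$).**

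The plan is to mimic the proof of the subdynamic programming principle for $V^+$ (Theorem 4.2), but working with the min-max representation \eqref{} of $(V^-)^*$ from Proposition 4.4, i.e. with $(V^-)^*(t,x,\hat p)=\inf_{\nu}\sup_{\mu}\max_i\{\hat p_i-J_i(t,x,\mu,\nu)\}$ in place of $V^+(t,x,p)=\inf_\mu\sup_\sigma J$. The key structural point is that in Proposition 4.4 the roles of the two players are swapped (the infimum is over $\nu\in\mathcal{T}^r(\bar t)$, the supremum over $\mu$), so the Markov-type concatenation must be performed on the $\nu$-variable. Fix $(\bar t,\bar x,\hat p)$ and $t\in[\bar t,T]$. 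Let $(A^j)_j$ be a Borel partition of $\mathbb{R}^d$ with $\mathrm{diam}(A^j)\le\delta$, pick $y^j\in A^j$, and choose $\nu^j\in\mathcal{T}^r(t)$ that is $\epsilon$-optimal for $(V^-)^*(t,y^j,\hat p)$ in the sense of the $\inf_\nu$. Also choose $\bar\nu\in\mathcal{T}^r(\bar t,t)$ that is $\epsilon$-optimal for the right-hand side of \eqref{}, and build the concatenated randomized stopping time
\begin{equation*}
\hat\nu=\begin{cases}\bar\nu&\text{on }\{\bar\nu<t\}\\ \nu^j&\text{on }\{\bar\nu=t,\ X^{\bar t,\bar x}_t\in A^j\}.\end{cases}
\end{equation*}

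Next I would estimate, for an arbitrary $\mu\in\mathcal{T}^r(\bar t)$, the quantity $\max_i\{\hat p_i-J_i(\bar t,\bar x,\mu,\hat\nu)\}$ by splitting the expectation defining $J_i$ at the time $t$ exactly as in \eqref{} of the proof of Theorem 4.2: on $\{\hat\nu<t\}$ (equivalently $\{\bar\nu<t\}$) the payoff is entirely determined by the early-execution terms before $t$, while on $\{\hat\nu\ge t\}$ (equivalently $\{\bar\nu=t\}$, intersected with $\{\mu\ge t\}$ or $\{\mu<t\}$) one wants to recognise $(V^-)^*(t,X^{\bar t,\bar x}_t,\hat p)$. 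The care point is that $\max_i$ does not commute with expectation nor with the conditioning, so I would first bring $\max_i$ outside by using $\max_i\{a_i+b_i\}\le \max_i a_i+\max_i b_i$ after separating the "before $t$" and "after $t$" contributions, then on each cell $A^j$ replace $X^{\bar t,\bar x}_\cdot$ started from $X^{\bar t,\bar x}_t\in A^j$ by $X^{t,y^j}_\cdot$ at the cost of $c\delta$ using the uniform Lipschitz continuity of $f,h,g$ (Assumption (A)) and of $(V^-)^*$ in $x$ (Proposition 4.5). Taking the supremum over $\mu$ inside each cell and using $\epsilon$-optimality of $\nu^j$ gives, on $\{\bar\nu=t\}$, a term bounded by $\mathbb{E}[(V^-)^*(t,X^{\bar t,\bar x}_t,\hat p)\,\mathbf{1}_{\{\ldots\}}]+c\delta+\epsilon$. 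Combining, and using $\epsilon$-optimality of $\bar\nu$ together with the defining inequality $(V^-)^*(\bar t,\bar x,\hat p)\le\sup_\mu\max_i\{\hat p_i-J_i(\bar t,\bar x,\mu,\hat\nu)\}$ (since $\hat\nu$ is just one admissible choice for the $\inf_\nu$), yields
\begin{equation*}
(V^-)^*(\bar t,\bar x,\hat p)\le\inf_{\nu\in\mathcal{T}^r(\bar t,t)}\sup_{\mu\in\mathcal{T}^r(\bar t,t)}\mathbb{E}\Big[\max_i\{\hat p_i-f_i(X^{\bar t,\bar x}_\nu)\}\mathbf{1}_{\nu<\mu,\nu<t}+\max_i\{\hat p_i-h_i(X^{\bar t,\bar x}_\mu)\}\mathbf{1}_{\mu\le\nu,\mu<t}+(V^-)^*(t,X^{\bar t,\bar x}_t,\hat p)\mathbf{1}_{\mu=\nu=t}\Big]+c\delta+2\epsilon,
\end{equation*}
and the claim follows by sending $\delta,\epsilon\to0$, after passing (as in Remark 2.1 / Remark 4.2) from randomized $\mu$ to ordinary stopping times $\tau$ on the inner supremum.

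The main obstacle I expect is the bookkeeping around the $\max_i$ operator: unlike in Theorem 4.2, where $J$ is linear in $p$ and the payoff splits cleanly, here one must ensure that the cellwise replacement of trajectories and the conditioning on $\{X^{\bar t,\bar x}_t\in A^j\}$ interact correctly with $\max_i\{\hat p_i-J_i\}$, since the maximising index $i$ generally depends on $\omega$ and on the cell $A^j$. The way around this is to never split $\max_i$ over the randomising partition $\{A^j\}$ — instead keep $\max_i$ outermost and only use the subadditivity $\max_i(a_i+b_i)\le\max_i a_i+\max_i b_i$ once, to detach the deterministic "after-$t$" continuation value $(V^-)^*(t,X^{\bar t,\bar x}_t,\hat p)$ from the "before-$t$" running payoff; the cellwise estimate is then applied only to the continuation term, where $\max_i\{\hat p_i-J_i(t,y^j,\mu^{j},\nu^j)\}\le(V^-)^*(t,y^j,\hat p)+\epsilon$ holds by definition of $\nu^j$, and the $x$-Lipschitz bound of Proposition 4.5 converts $(V^-)^*(t,y^j,\hat p)$ to $(V^-)^*(t,X^{\bar t,\bar x}_t,\hat p)+c\delta$ on $A^j$. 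A secondary technical point, as in Theorem 4.2, is measurability of the concatenated $\hat\nu$ and of the index choices; this is handled by choosing the partition $(A^j)$ countable and the selections $\nu^j$, $y^j$ fixed per cell, exactly as done there.
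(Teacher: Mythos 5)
The proposal is correct and follows essentially the same approach as the paper's proof: use the $\inf_\nu\sup_\tau$ representation of $(V^-)^*$ from Proposition 4.6, concatenate an $\epsilon$-optimal pre-$t$ stopping time $\bar\sigma$ with cellwise $\epsilon$-optimal post-$t$ stopping times $\nu^j$ to form $\hat\nu$, apply subadditivity of $\max_i$ once to separate the pre-$t$ and post-$t$ contributions, and use the Lipschitz estimates plus the Markov property cellwise on the continuation term. The one step you leave implicit is the Jensen-type inequality $\max_i\mathbb{E}[\cdot]\le\mathbb{E}[\max_i\mathbb{E}[\cdot\mid\mathcal{H}_{\bar t,t}]]$ (convexity of $v\mapsto\max_i v_i$), which is what actually licenses moving $\max_i$ inside the expectation so that the cellwise $\epsilon$-optimality of $\nu^j$ can be applied pathwise on each $A^j$; the paper invokes this explicitly, and your phrase ``keep $\max_i$ outermost'' is slightly at odds with it, but your subsequent description of the cellwise bound makes clear you are performing the same step.
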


\begin{proof}
Fix $(\bar t,\bar x,\hat p)\in[0,T]\times\mathbb{R}^d\times\Delta(I)$. Let $A^j$ be a partition of $\mathbb{R}^d$ with $diam(A^j)\leq \delta$ for a $\delta>0$. For any $j\in\mathbb{N}$, choose a $y^j\in A^j$ and $\nu^j\in\mathcal{T}^r(t)$ $\epsilon$-optimal for $(V^-)^* (t,y^j,\hat p)$. Furthermore fix some $\bar \sigma\in\mathcal{T}(\bar t,t)$ $\epsilon$-optimal for the right hand side of (4.22).\\
We shall build with $\bar \sigma$ and  $(\nu^j)_{j\in\mathbb{N}}$ a randomized stopping time $\hat\nu\in\mathcal{T}^r(\bar t)$ in the following way:
\begin{equation}
\hat \nu=\begin{cases}
\bar \nu & \text{on } \{\bar \nu<t\}\\
(\nu^j)_{j\in\mathbb{N}}& \text{on } \{\bar \nu=t, X^{\bar t,\bar x}_t\in A^j \}. \\
\end{cases}
\end{equation}
First note that for any $\tau\in\mathcal{T}(\bar t)$
\begin{equation}
\begin{array}{l}
\max_{i\in\{1,\dots, I\}}\left\{\hat p_i-\mathbb{E}\bigg[f_i(X^{\bar t,\bar x}_{\hat \nu}) 1_{\hat \nu<\tau,\hat \nu<T}+h_i(X^{\bar t,\bar x}_{\tau} )1_{\tau\leq\hat \nu,\tau<T} + g_i(X^{\bar t,\bar x}_T) 1_{\tau=\hat \nu=T}\bigg]\right\}\\
\ \\
= \max_{i\in\{1,\dots, I\}}\bigg\{\hat p_i-\mathbb{E}\bigg[f_i(X^{\bar t,\bar x}_{\hat \nu}) 1_{\hat \nu<\tau,\hat \nu<t}+h_i(X^{\bar t,\bar x}_{\tau} )1_{\tau\leq\hat \nu,\tau<t} \bigg]\\
\ \\
\ \ \ \ \ \ \ \ \ \ \ \ \ -\mathbb{E}\bigg[f_i(X^{\bar t,\bar x}_{\hat \nu}) 1_{t\leq\hat \nu<\tau,\hat \nu<T}+h_i(X^{\bar t,\bar x}_{\tau} )1_{t\leq\tau\leq\hat \nu,\tau<T}+g_i(X^{\bar t,\bar x}_T) 1_{\tau=\hat \nu=T} \bigg]\bigg\}\\
\ \\
\leq \max_{i\in\{1,\dots, I\}}\bigg\{\mathbb{E}\bigg[(\hat p_i-f_i(X^{\bar t,\bar x}_{\hat \nu})) 1_{\hat \nu<\tau,\hat \nu<t}+(\hat p_i-h_i(X^{\bar t,\bar x}_{\tau} ))1_{\tau\leq\hat \nu,\tau<t} \bigg]\bigg\}\\
\ \\
\ \ \ \ \ \ \ \ \ \ \ \ \ \ +\max_{i\in\{1,\dots, I\}}\bigg\{ \mathbb{E}\bigg[\hat p_i 1_{t\leq \tau,t\leq \hat \nu}-f_i(X^{\bar t,\bar x}_{\hat \nu}) 1_{t\leq\hat \nu<\hat\mu,\hat \nu<T}\\
\ \\
\ \ \ \ \ \ \ \ \ \ \ \ \ \ \ \ \ \ \ \ \ \ \ \ \ \ \ \ \ \ \ \ \ \ \ \ \ \ \ \ -h_i(X^{\bar t,\bar x}_{\tau} )1_{t\leq\tau\leq\hat \nu,\tau<T}-g_i(X^{\bar t,\bar x}_T) 1_{\tau=\hat \nu=T} \bigg]\bigg\}.
\end{array}
\end{equation}
Furthermore by the uniform Lipschitz continuity of the coefficients by (A)  we have for a generic constant $c>0$
\begin{equation*}
\begin{array}{l}
\bigg|\mathbb{E}\bigg[f_i(X^{\bar t,\bar x}_{\hat \nu}) 1_{t\leq\hat \nu<\tau,\hat \nu<T}+h_i(X^{\bar t,\bar x}_{\tau} )1_{t\leq\tau\leq\hat \nu,\tau<T}+g_i(X^{\bar t,\bar x}_T) 1_{\tau=\hat \nu=T} \bigg]\\
\ \\
\ \ -\sum_{j\in\mathbb{N}}\mathbb{E}\bigg[f_i(X^{t,y^j}_{\hat \nu}) 1_{t\leq\hat \nu<\tau,\hat \nu<T}+h_i(X^{t,y^j}_{\tilde \mu} )1_{t\leq\tau\leq\hat \nu,\tau<T}+g_i(X^{t,y^j}_T) 1_{\tau=\hat\nu=T} 1_{X^{\bar t,\bar x}_t\in A^j}\bigg]\bigg|\\
\\
\leq c \delta.
\end{array}
\end{equation*}
And  since $v\mapsto \max_{i\in\{1,\dots, I\}} v_i$ is convex, we have by taking conditional expectation, the fact that $X^{\bar t,\bar x}$ is Markovian and the choice of $\hat\nu$ in (4.24) 
\begin{equation*}
\begin{array}{rcl}
&&\max_{i\in\{1,\dots, I\}}\bigg\{\mathbb{E}\bigg[\hat p_i 1_{\tau\geq t,\hat \nu\geq t}-f_i(X^{\bar t,\bar x}_{\hat \nu}) 1_{t\leq\hat \nu<\tau,\hat \nu<T}\\
\ \\
&&\ \ \ \ \ \ \ \ \ \ \ \ \ \ \ \ \ \ \ \ \ \ \ \ \ \ \ -h_i(X^{\bar t,\bar x}_{\tau} )1_{t\leq\tau\leq\hat \nu,\tau<T}-g_i(X^{\bar t,\bar x}_T) 1_{\tau=\hat \nu=T} \bigg]\bigg\}\\
\ \\
&&\ \ \ \ \leq \sum_{j\in\mathbb{N}}\ \mathbb{E}\bigg[\max_{i\in\{1,\dots, I\}}\bigg\{\hat p_i -\mathbb{E}\bigg[f_i(X^{t,y^j}_{\nu^j}) 1_{\nu^j<\tau,\nu_j<T}+h_i(X^{t,y^j}_{\tau} )1_{\tau\leq \nu^j,\tau<T}\\
\ \\
&&\ \ \ \ \ \ \ \ \ \ \ \ \ \ \ \ \ \ +g_i(X^{t,y^j}_T) 1_{\tau=\nu^j=T}\bigg]\bigg\} 1_{X^{\bar t,\bar x}_t\in A^j}1_{\tau\geq t,\hat \nu\geq t}\bigg]+c\delta\\
\\
&&\ \ \ \ \leq \sum_{j\in\mathbb{N}}\ \mathbb{E}\bigg[(V^-)^*(t,y^j,\hat p) 1_{X^{\bar t,\bar x}_t\in A^j}1_{\tau\geq t,\hat \nu\geq t}\bigg]+c\delta+\epsilon,
\end{array}
\end{equation*}
which yields with the Lipschitz property of $(V^-)^*$ in $x$ by Proposition 4.6.
\begin{equation}
\begin{array}{rcl}
&&\max_{i\in\{1,\dots, I\}}\bigg\{\mathbb{E}\bigg[\hat p_i 1_{\tau\geq t,\hat \nu\geq t}-f_i(X^{\bar t,\bar x}_{\hat \nu}) 1_{t\leq\hat \nu<\tau,\hat \nu<T}\\
&&\ \ \ \ \ \ \ \ \ \ \ \ \ \ \ \ \ \ \ \ \ \ \ \ \ \ \ \ \ \ \ \ \ -h_i(X^{\bar t,\bar x}_{\tilde \mu} )1_{t\leq\tau\leq\hat \nu,\tau <T}-g_i(X^{\bar t,\bar x}_T) 1_{\tau=\hat \nu=T} \bigg]\bigg\}\\
\ \\
&&\ \ \ \ \leq  \mathbb{E}\bigg[(V^-)^*(t,X^{\bar t,\bar x}_t,\hat p)1_{\tau\geq t,\hat \nu\geq t}\bigg]+2c\delta+\epsilon.
\end{array}
\end{equation}
Let $\hat\tau\in\mathcal{T}(\bar t)$ be $\epsilon$-optimal for $(V^-)^* (\bar t,\bar x,\hat p)$ (4.18) 
then combining (4.24) with (4.25) we get
\begin{equation*}
\begin{array}{rcl}
&&(V^-)^* (\bar t,\bar x,\bar p)\\
\ \\
&&\leq \max_{i\in\{1,\dots, I\}}\bigg\{\mathbb{E}\bigg[(\hat p_i-f_i(X^{\bar t,\bar x}_{\hat \nu}) )1_{\hat \nu<\hat \tau,\hat \nu<t}+(\hat p_i-h_i(X^{\bar t,\bar x}_{\hat \tau} ))1_{\hat \tau\leq\hat \nu,\tau<t}\\
\ \\
&&\ \ \ \ \ \ \ \ \  \ \ \ \ \ \ \ \ \ \ \ \ \ \ \ \ \ \ \ \ \ \ +(V^-)^*(t,X^{\bar t,\bar x}_t, \hat p) 1_{\hat \tau\geq t, \hat \nu\geq t}\bigg]\bigg\}+\epsilon+2c \delta.
\ \\
&&\leq \inf_{\sigma\in\mathcal{T}(\bar t,t)}\sup_{\tau\in\mathcal{T}(\bar t,t)} \mathbb{E}\bigg[ \max_{i\in\{1,\dots, I\}}\{\hat p_i-f_i(X^{\bar t,\bar x}_{\sigma}) \}1_{\sigma<\tau,\sigma<t}\\
\ \\
&&\ \ \ \ \ \ \ \ \ \ + \max_{i\in\{1,\dots, I\}} \{\hat p_i-h_i(X^{\bar t,\bar x}_{ \tau} )\}1_{ \tau\leq\sigma,\tau<t}+(V^-)^*(t,X^{\bar t,\bar x}_t, \hat p) 1_{\sigma=\tau=t}\bigg]\\
\ \\
&&\ \ \ +2\epsilon+2c \delta.
\end{array}
\end{equation*}
The claim follows since $\epsilon$ and $\delta$ can be chosen arbitrarily small.

\end{proof}

\section{Viscosity solution property}
\subsection{Subsolution property for $V^+$}
\begin{thm}
$V^+$ is a viscosity subsolution to (3.5).
\end{thm}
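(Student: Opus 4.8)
The plan is to verify the three equivalent conditions in the definition of viscosity subsolution at an interior point $(\bar t,\bar x,\bar p)\in[0,T)\times\mathbb{R}^d\times\operatorname{Int}(\Delta(I))$, given a test function $\phi$ with $V^+-\phi$ having a strict maximum there and $V^+(\bar t,\bar x,\bar p)=\phi(\bar t,\bar x,\bar p)$. Condition (i), $\lambda_{\min}(\bar p,\partial^2\phi/\partial p^2)\geq 0$, follows directly from the convexity of $V^+$ in $p$ (Proposition 4.3): if some direction $z\in T_{\Delta(I)}(\bar p)$ had $\langle(\partial^2\phi/\partial p^2)z,z\rangle<0$, then $\phi$ would be strictly concave along $\bar p+sz$ near $s=0$ while $V^+\leq\phi$ touches at $s=0$, contradicting convexity of $s\mapsto V^+(\bar t,\bar x,\bar p+sz)$. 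Condition (ii), $\phi(\bar t,\bar x,\bar p)\leq\langle h(\bar t,\bar x),\bar p\rangle$, is immediate from Proposition 4.5, which gives $V^+\leq\langle h,p\rangle$ pointwise.

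The substance is condition (iii): assuming $\phi(\bar t,\bar x,\bar p)=V^+(\bar t,\bar x,\bar p)>\langle f(\bar t,\bar x),\bar p\rangle$, we must show $(\partial_t+\mathcal{L})[\phi](\bar t,\bar x,\bar p)\geq 0$. I would argue by contradiction: suppose $(\partial_t+\mathcal{L})[\phi](\bar t,\bar x,\bar p)<-2\theta<0$ for some $\theta>0$. By continuity of the coefficients, $\phi$, and $f$, there is a small parabolic neighbourhood $[\bar t,\bar t+r]\times B_r(\bar x)$ (with $\bar p$ fixed, since $\bar p$ is interior) on which $(\partial_t+\mathcal{L})[\phi](t,x,\bar p)<-\theta$ and simultaneously $\phi(t,x,\bar p)>\langle f(t,x),\bar p\rangle$; by the strict maximum, $V^+-\phi\leq -\kappa<0$ on the parabolic boundary of this neighbourhood for some $\kappa>0$. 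Now apply the subdynamic programming principle of Theorem 4.7 with the intermediate time $t=\bar t+h$ for small $h>0$ and with $\bar p$: it bounds $V^+(\bar t,\bar x,\bar p)$ above by $\inf_\tau\sup_\sigma\mathbb{E}[\dots]$ of the stopped payoff involving $V^+(t\wedge\text{exit},X,\bar p)$ on the event $\{\tau=\sigma=t\}$ and the $h$- and $f$-terms on the stopping events. One chooses the near-optimal $\tau$ for the informed player to be $\tau=\bar t+h\wedge(\text{exit time of }X \text{ from }B_r(\bar x))$, i.e. essentially not stopping early; then on all relevant events one replaces $V^+$ by $\phi$ (using $V^+\leq\phi$, with the improved strict inequality $V^+\leq\phi-\kappa$ when the process has exited $B_r$), uses $\langle f(\sigma,X_\sigma),\bar p\rangle\leq\phi(\sigma,X_\sigma,\bar p)$ whenever Player 2 stops inside the neighbourhood, and applies Dynkin's formula to $\phi(\cdot,X_\cdot,\bar p)$ along the diffusion. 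The drift term picked up is $\int_{\bar t}^{\cdot}(\partial_t+\mathcal{L})[\phi](s,X_s,\bar p)\,ds\leq -\theta\cdot(\text{elapsed time})<0$, which after taking $\sup_\sigma$ and using that the exit contributes the strict gap $\kappa$ with positive probability, yields $V^+(\bar t,\bar x,\bar p)\leq\phi(\bar t,\bar x,\bar p)-c(h)$ with $c(h)>0$ — contradicting $V^+(\bar t,\bar x,\bar p)=\phi(\bar t,\bar x,\bar p)$.

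The main obstacle is the bookkeeping in this Dynkin-formula argument: one has to handle the three indicator events $\{\sigma<\tau,\sigma<t\}$, $\{\tau\leq\sigma,\tau<t\}$, $\{\tau=\sigma=t\}$ from Theorem 4.7 simultaneously and show that, for the informed player's choice of $\tau$ as (essentially) the exit time, every term is dominated by $\phi$ along the path up to a strictly negative correction. The event where Player 2 stops first ($\sigma<\tau$) is controlled because $\langle f(\sigma,X_\sigma),\bar p\rangle\leq\phi(\sigma,X_\sigma,\bar p)$ inside the neighbourhood (by our assumption propagated to a neighbourhood) and $\leq\phi-\kappa$ on the boundary; the event $\tau=\sigma=t$ reduces to $V^+(t,X_t,\bar p)$ which is $\leq\phi(t,X_t,\bar p)$, with strict gap when $X$ has exited $B_r$; the event $\tau\leq\sigma$ essentially cannot occur before the exit time by our choice of $\tau$, and at the exit it again yields the $\kappa$ gap. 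Combining with the supermartingale decomposition of $\phi(\cdot,X_\cdot,\bar p)$ under the strict inequality $(\partial_t+\mathcal{L})[\phi]<-\theta$, the net estimate is strictly below $\phi(\bar t,\bar x,\bar p)$, which is the desired contradiction. Once this is set up the remaining estimates are routine localization and Gronwall-type bounds using Assumption (A) and Proposition 4.1.
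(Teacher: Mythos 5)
Your overall strategy coincides with the paper's: verify conditions (i) and (ii) of the subsolution definition from convexity and the pointwise bound $V^+\leq\langle h,p\rangle$ (Propositions 4.2 and 4.3 in the paper's numbering), then prove (iii) by contradiction via the subdynamic programming principle of Theorem 4.4 combined with It\^o's formula on a localized neighbourhood. Conditions (i) and (ii) are handled exactly as in the paper. However, your treatment of condition (iii) contains a genuine gap in the choice of the informed player's stopping time, and your localization device does not match the needs of the argument.

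The problem with choosing $\tau=(\bar t+h)\wedge(\text{exit time of }B_r(\bar x))$ is that whenever the exit occurs before $\bar t+h$, the dynamic programming inequality produces the term $\langle h(\tau,X_\tau),\bar p\rangle$, and you have no control over $\langle h(\tau,X_\tau),\bar p\rangle-\phi(\tau,X_\tau,\bar p)$. The strict-maximum gap $\kappa$ that you invoke gives $V^+\leq\phi-\kappa$ on the parabolic boundary; but since $V^+\leq\langle h,p\rangle$ (Assumption (A)(ii) gives $f\leq h$ and the game value sits between the obstacles), the inequality $\langle h,p\rangle\geq V^+$ goes the wrong way and cannot be converted into an upper bound of $\langle h,p\rangle$ by $\phi-\kappa$. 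So the ``exit yields the $\kappa$ gap'' claim is unfounded and the $h$-term can push the estimate above $\phi(\bar t,\bar x,\bar p)$. A second, related issue: in the $\{\tau=\sigma=t\}$ term the payoff is $V^+(t,X^{\bar t,\bar x}_t,\bar p)$, and $X^{\bar t,\bar x}_t$ is not confined to the ball, so the compact-boundary gap $\kappa$ gives you nothing there either — the strict maximum is only a local statement.

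The paper sidesteps both issues by taking $\tau=t$ deterministic, so the $h$-term never fires, and then controls the exit event probabilistically: with $A=\{\sup_{s\in[\bar t,t]}|X^{\bar t,\bar x}_s-\bar x|>h\}$ one has $\mathbb{P}[A]\leq c(t-\bar t)^2/h^4$, which is used to discard the contribution on $A$ and keep the uniform estimates $\phi-\langle f,\bar p\rangle\geq\delta$, $(-\partial_t-\mathcal{L})[\phi]\geq\delta$ only on $A^c$. The resulting inequality $\delta\,\mathbb{E}\left[1_{\sigma^\epsilon<t}+(\sigma^\epsilon-\bar t)\right]\geq\delta(t-\bar t)$ (equation (5.7) of the paper) then supplies the needed strictly positive gap of order $(t-\bar t)$, dominating the $O((t-\bar t)^2/h^4)$ error. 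You should replace your exit-time stopping rule and $\kappa$-gap bookkeeping with this deterministic $\tau=t$ plus probability estimate; the rest of your contradiction argument (the It\^o/Dynkin formula along the diffusion and the negativity of the drift contribution) then goes through as you describe.
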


\begin{proof}
Let $(\bar t,\bar x,\bar p) \in[0,T)\times\mathbb{R}^d\times \textnormal{Int}(\Delta(I))$ and $\phi:[0,T]\times\mathbb{R}^d\times\Delta(I)\rightarrow \mathbb{R}$ a test function such that $V^+-\phi$ has a strict global maximum at $(\bar{t},\bar x,\bar p)$ with $V^+(\bar{t},\bar x,\bar p)-\phi(\bar{t},\bar x,\bar p)=0$. 

Because of the convexity of $V^+$ by Proposition 4.2. and since $\bar p\in\textnormal{Int}(\Delta(I))$ we have
\begin{equation}
\lambda_{\min}\left(p,\frac{\partial ^2 \phi}{\partial p^2}\right)\geq0.
\end{equation}
So it remains to show
\begin{equation}
				\begin{array}{l}
	\max\{\min\{(-\frac{\partial} {\partial t}-\mathcal{L})(\phi),\phi-\langle f(t,x),p\rangle\}, \phi-\langle h(t,x),p\rangle\}\leq0
		\end{array}
\end{equation}
at $(\bar{t},\bar x,\bar p)$.\\
Note that by Proposition 4.3.  we already have
\begin{equation}
\phi(\bar t,\bar x,\bar p)-\langle h(\bar t,\bar x),\bar p\rangle\leq0.
\end{equation}
So it remains to show that for $V^+(\bar t,\bar x,\bar p) -\langle f(\bar t,\bar x),\bar p\rangle=\phi(\bar t,\bar x,\bar p) -\langle f(\bar t,\bar x),\bar p\rangle>0$ we have that 
$(-\frac{\partial \phi} {\partial t}-\mathcal{L})[\phi](\bar{t},\bar x,\bar p)\leq 0$, which is just a classical consequence of the subdynamic programming principle for $V^+$. Indeed if we set $\tau=t$ in the dynamic programming (4.22) we have for an $\epsilon(t-\bar t)$ optimal $\sigma_\epsilon\in\mathcal{T}(\bar t)$
 \begin{equation}
 \begin{array}{rcl}
\phi(\bar t, \bar x,\bar p)
&=&V^+ (\bar t,\bar x,\bar p)\\
\ \\
 &\leq&\mathbb{E}\bigg[\langle \bar p,f(X^{\bar t,\bar x}_{\sigma^\epsilon})\rangle 1_{\sigma^\epsilon<t}+ V^+(t,X^{\bar t,\bar x}_t,\bar p) 1_{\sigma^\epsilon=t}\bigg]-\epsilon(t-\bar t)\\
\ \\
&\leq& \mathbb{E}\bigg[\langle \bar p,f(X^{\bar t,\bar x}_{\sigma^\epsilon})\rangle 1_{\sigma^\epsilon<t}+ \phi(t,X^{\bar t,\bar x}_t,\bar p) 1_{\sigma^\epsilon=t}\bigg]-\epsilon(t-\bar t).
\end{array}
\end{equation}
If we now assume 
\begin{equation}
V^+(\bar t,\bar x,\bar p) -\langle f(\bar t,\bar x),\bar p\rangle=\phi(\bar t,\bar x,\bar p) -\langle f(\bar t,\bar x),\bar p\rangle>0
\end{equation}
and
\begin{equation}
(-\frac{\partial \phi} {\partial t}-\mathcal{L})[\phi](\bar{t},\bar x,\bar p)> 0,
\end{equation}
then there exists $h,\delta>0$ such that for all $(s,x)\in[\bar t,\bar t+h]\times B_h(\bar x)$
\begin{equation*}
\begin{array}{l}
\phi(s,x,\bar p) -\langle f(s,x),\bar p\rangle\geq \delta\ \ \ \ \ \ \ \ \ \ \ \ \ \textnormal{and}\ \ \ \ \ \ \ \ \ \ \ \ 
(-\frac{\partial \phi} {\partial t}-\mathcal{L})[\phi](s,x,\bar p)\geq \delta.
\end{array}
\end{equation*}
Define $A:=\{\inf_{s\in [\bar t, t]} |X^{\bar t,\bar x}_s-\bar x |>h\}$ and note that there exists a constant $c$ depending only on the parameters of $X^{\bar t.\bar x}$ such that $\mathbb{P}[A]\leq \frac{c(t-\bar t)^2}{h^4}$. By the It\^o formula we have since the coefficients $\phi$ and all its derivatives are bounded
\begin{equation*}
\begin{array}{rcl}
\phi(\bar t,\bar x,\bar p)&=&\mathbb{E}\left[\phi({\sigma^\epsilon},X^{\bar t,\bar x}_{{\sigma^\epsilon}},\bar p)+\int_{\bar t}^{{\sigma^\epsilon}} (-\frac{\partial}{\partial t}-\mathcal{L}) (s,X^{\bar t,\bar x}_s,\bar p) ds\right]\\
\ \\
&\geq&\mathbb{E}\left[1_{A^c}\left(\phi({\sigma^\epsilon},X^{\bar t,\bar x}_{{\sigma^\epsilon}},\bar p)+\int_{\bar t}^{{\sigma^\epsilon}} (-\frac{\partial}{\partial t}-\mathcal{L}) (s,X^{\bar t,\bar x}_s,\bar p) ds\right)\right]- c \frac{(t-\bar t)^2}{h^4}\\
\ \\
&\geq&\mathbb{E}\bigg[1_{A^c}\bigg((\langle f({\sigma^\epsilon},X^{\bar t,\bar x}_{{\sigma^\epsilon}}),\bar p\rangle+\delta) 1_{\sigma^\epsilon<t}+\phi({\sigma^\epsilon},X^{\bar t,\bar x}_{{\sigma^\epsilon}},\bar p) 1_{\sigma^\epsilon=t}\\
\ \\
&&\ \ \ \ \ \ \ \ \ \ \ \ +\delta( \sigma^\epsilon-\bar t)\bigg)\bigg]- c \frac{(t-\bar t)^2}{h^4}\\
&\geq&\mathbb{E}\bigg[\langle f({\sigma^\epsilon},X^{\bar t,\bar x}_{{\sigma^\epsilon}}),\bar p\rangle 1_{\sigma^\epsilon<t}+\phi({\sigma^\epsilon},X^{\bar t,\bar x}_{{\sigma^\epsilon}},\bar p) 1_{\sigma^\epsilon=t}\bigg]\\
\ \\
&&\ \ \ \ \ \ \ \ \ \ \ \ +\delta \mathbb{E}\left[1_{\sigma^\epsilon<t}+( \sigma^\epsilon-\bar t)\right]- 2 c \frac{(t-\bar t)^2}{h^4}.
\end{array}
\end{equation*}
Furthermore note that for $1\geq (t-\bar t)$ we have that
\begin{equation}
\mathbb{E}\left[1_{\sigma^\epsilon<t}+( \sigma^\epsilon-\bar t)\right]=\mathbb{E}\left[(1+{\sigma^\epsilon}-\bar t)1_{\sigma^\epsilon<t}+(t-\bar t)1_{\sigma^\epsilon=t}\right]\geq  (t-\bar t).
\end{equation}
So
\begin{equation*}
\begin{array}{rcl}
\phi(\bar t,\bar x,\bar p)
&\geq&\mathbb{E}\bigg[\langle f({\sigma^\epsilon},X^{\bar t,\bar x}_{{\sigma^\epsilon}}),\bar p\rangle 1_{\sigma^\epsilon<t}+\phi({\sigma^\epsilon},X^{\bar t,\bar x}_{{\sigma^\epsilon}},\bar p) 1_{\sigma^\epsilon=t}\bigg]\\
\ \\
&&\ \ \ \ \ \ \ \ \ \ \ \ +\delta (t-\bar t)- 2 c \frac{(t-\bar t)^2}{h^4},
\end{array}
\end{equation*}
which gives with (5.4)
\begin{equation*}
\delta (t-\bar t)- 2 c \frac{(t-\bar t)^2}{h^4}-\epsilon (\bar t-t)\leq 0.
\end{equation*}
Hence
\begin{equation}
\delta- 2 c \frac{(t-\bar t)}{h^4}-\epsilon\leq 0,
\end{equation}
which yields a contradiction, since $(t-\bar t)$ and $\epsilon$ can be choosen arbitrarily small.

\end{proof}

\subsection{Supersolution property of $V^-$}

With the subdynamic programming principle for $(V^-)^*$ Theorem 4.10. and the estimate in Proposition 4.9. we can now as in Theorem 5.1. establish:

\begin{thm}
$(V^-)^*$ is convex and is a viscosity subsolution to the obstacle problem
\begin{equation}
				\begin{array}{l}
	\max\bigg\{\min\left\{(-\frac{\partial } {\partial t}-\mathcal{L})[w],w-\max_{i\in\{1,\dots, I\}}\{\hat p_i-h_i(x)\}\right\},\\
	\ \ \ \ \ \ \ \ \ \ \ \ \ \ \ \ \ \ \ \ \ \ \ \ \ \ \ \ \ w-\max_{i\in\{1,\dots, I\}}\{\hat p_i-f_i(x)\}\bigg\}=0
		\end{array}
			\end{equation}
with terminal condition $w(T,x,p)=\max_{i\in\{1,\dots, I\}}\{\hat p_i-g_i(x)\}$.
\end{thm}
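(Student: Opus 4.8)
Plan of proof. The statement is an almost verbatim copy of Theorem 5.1, now for the conjugate function, so the idea is to replay that argument with the dictionary
\[
V^+ \leadsto (V^-)^*,\qquad \langle f(t,x),p\rangle \leadsto \psi_f(x,\hat p):=\max_{i}\{\hat p_i-f_i(x)\},\qquad \langle h(t,x),p\rangle \leadsto \psi_h(x,\hat p):=\max_{i}\{\hat p_i-h_i(x)\},
\]
the subdynamic programming principle of Theorem 4.4 being replaced by the one for $(V^-)^*$ (Theorem 4.10) and the bound of Proposition 4.3 by that of Proposition 4.9, i.e. $\psi_h(x,\hat p)\le (V^-)^*(t,x,\hat p)\le\psi_f(x,\hat p)$. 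Two simplifications occur: (5.11) is a \emph{parametric} obstacle problem in $(t,x)$ with $\hat p\in\mathbb{R}^I$ a passive parameter, so there is no condition on $\partial^2_{\hat p}\phi$ or on $\lambda_{\min}$ to verify and no interior restriction on $\hat p$; and convexity of $(V^-)^*$ in $\hat p$ is immediate, since it is the Legendre--Fenchel transform of the Lipschitz map $p\mapsto V^-(t,x,p)$ (alternatively it was already shown inside the proof of Proposition 4.6).

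For the subsolution property, take a test function $\phi$ such that $(V^-)^*-\phi$ has a strict global maximum at $(\bar t,\bar x,\hat p)\in[0,T)\times\mathbb{R}^d\times\mathbb{R}^I$ with value $0$; in particular $(V^-)^*\le\phi$ on the whole domain, with equality at $(\bar t,\bar x,\hat p)$. From $(V^-)^*\le\psi_f$ (Proposition 4.9) we get $\phi(\bar t,\bar x,\hat p)=(V^-)^*(\bar t,\bar x,\hat p)\le\psi_f(\bar x,\hat p)$, which takes care of the outer term $\phi-\psi_f\le 0$. Hence it only remains to prove $\min\{(-\tfrac{\partial}{\partial t}-\mathcal{L})[\phi],\,\phi-\psi_h\}\le 0$ at $(\bar t,\bar x,\hat p)$, i.e. that $\phi(\bar t,\bar x,\hat p)-\psi_h(\bar x,\hat p)>0$ forces $(-\tfrac{\partial}{\partial t}-\mathcal{L})[\phi](\bar t,\bar x,\hat p)\le 0$.

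Suppose, for contradiction, that $\phi(\bar t,\bar x,\hat p)-\psi_h(\bar x,\hat p)>0$ and $(-\tfrac{\partial}{\partial t}-\mathcal{L})[\phi](\bar t,\bar x,\hat p)>0$. Then there are $h,\delta>0$ with $\phi(s,x,\hat p)-\psi_h(x,\hat p)\ge\delta$ and $(-\tfrac{\partial}{\partial t}-\mathcal{L})[\phi](s,x,\hat p)\ge\delta$ on $[\bar t,\bar t+h]\times B_h(\bar x)$. In (4.22) the $\psi_f$-obstacle is attached to the infimizing stopping time $\sigma$, so plugging the particular choice $\sigma=t$ into the infimum (which only yields an upper bound on $(V^-)^*$) annihilates that term and gives, for any $t\le\bar t+h$,
\[
(V^-)^*(\bar t,\bar x,\hat p)\le \sup_{\tau\in\mathcal{T}(\bar t,t)}\mathbb{E}\Big[\psi_h(X^{\bar t,\bar x}_\tau,\hat p)\,1_{\tau<t}+(V^-)^*(t,X^{\bar t,\bar x}_t,\hat p)\,1_{\tau=t}\Big].
\]
Picking $\tau^\epsilon\in\mathcal{T}(\bar t,t)$ that is $\epsilon(t-\bar t)$-optimal and using $(V^-)^*\le\phi$,
\[
\phi(\bar t,\bar x,\hat p)\le \mathbb{E}\Big[\psi_h(X^{\bar t,\bar x}_{\tau^\epsilon},\hat p)\,1_{\tau^\epsilon<t}+\phi(t,X^{\bar t,\bar x}_t,\hat p)\,1_{\tau^\epsilon=t}\Big]+\epsilon(t-\bar t).
\]
On the other hand, applying It\^o's formula to $\phi$ between $\bar t$ and $\tau^\epsilon$, restricting to $A^c$ with $A:=\{\sup_{s\in[\bar t,t]}|X^{\bar t,\bar x}_s-\bar x|>h\}$ (so $\mathbb{P}[A]\le c(t-\bar t)^2/h^4$), and using boundedness of $\phi$ and its derivatives together with the two lower bounds $\ge\delta$, one obtains
\[
\phi(\bar t,\bar x,\hat p)\ge \mathbb{E}\Big[\psi_h(X^{\bar t,\bar x}_{\tau^\epsilon},\hat p)\,1_{\tau^\epsilon<t}+\phi(t,X^{\bar t,\bar x}_t,\hat p)\,1_{\tau^\epsilon=t}\Big]+\delta\,\mathbb{E}\big[1_{\tau^\epsilon<t}+(\tau^\epsilon-\bar t)\big]-2c\tfrac{(t-\bar t)^2}{h^4},
\]
and $\mathbb{E}[1_{\tau^\epsilon<t}+(\tau^\epsilon-\bar t)]\ge t-\bar t$ once $t-\bar t\le 1$. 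Comparing the last two displays yields $\delta\le 2c(t-\bar t)/h^4+\epsilon$, which is absurd after letting $t\downarrow\bar t$ and then $\epsilon\downarrow 0$. This establishes the viscosity subsolution property.

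The terminal condition is immediate: at $t=T$ the only admissible stopping time is $T$, so $V^-(T,x,p)=\langle p,g(x)\rangle$ and therefore $(V^-)^*(T,x,\hat p)=\sup_{p\in\Delta(I)}\{\langle\hat p,p\rangle-\langle p,g(x)\rangle\}=\max_i\{\hat p_i-g_i(x)\}$, a linear functional on the simplex attaining its maximum at a vertex. The only point requiring genuine care, as opposed to routine bookkeeping, is getting the correspondence obstacle$\leftrightarrow$player right in (4.22): because of the Fenchel conjugation the $f$-obstacle is now carried by the \emph{inf}-player, so it is $\sigma=t$ (not $\tau=t$, as one freezes in Theorem 5.1) that must be fixed to extract the one-sided principle compatible with the structure of (5.11); the remaining localization and It\^o estimates are exactly those of Theorem 5.1.
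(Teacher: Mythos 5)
Your proposal is correct and takes essentially the paper's own route: the paper proves Theorem 5.2 by invoking the argument of Theorem 5.1 together with the subdynamic programming of Theorem 4.10 and the obstacle bounds on $(V^-)^*$, and your write-up reproduces precisely this: isolate the upper-obstacle inequality from the pointwise bound, freeze the infimizing stopping time at the horizon $t$ in the one-sided DPP to suppress the upper obstacle, and run the same It\^o/localization/contradiction estimate with the $\delta$-margins on the remaining obstacle and the generator. The terminal-condition computation and the observation that no $\lambda_{\min}$ term or interior-of-simplex restriction is needed (since $\hat p$ is a free Euclidean parameter) are also right, and convexity of $(V^-)^*$ is indeed immediate from its definition as a Fenchel transform (and already appeared in the proof of Proposition 4.6).

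One small inconsistency worth flagging: the ``dictionary'' you announce at the outset, $\langle f,p\rangle\leadsto\psi_f$ and $\langle h,p\rangle\leadsto\psi_h$, does \emph{not} map the structure of (3.5) onto that of (5.9). Under conjugation the roles of the two obstacles swap: the lower bound $\langle f(t,x),p\rangle\le V^+\le\langle h(t,x),p\rangle$ becomes $\max_i\{\hat p_i-h_i\}\le(V^-)^*\le\max_i\{\hat p_i-f_i\}$, so the correct structural correspondence is $\langle f,p\rangle\leadsto\psi_h$ (inner $\min$, lower obstacle) and $\langle h,p\rangle\leadsto\psi_f$ (outer $\max$, upper obstacle). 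You in fact use the bounds and the DPP in the right way in the body of the argument --- the outer term is handled by $(V^-)^*\le\psi_f$, and the positivity hypothesis used in the contradiction step is $\phi>\psi_h$ --- so this is purely a misstatement in the orientation paragraph, not a gap in the proof.
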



We are now using Theorem 4.2 to conclude the supersolution property for $V^-$.\\

\begin{thm}
$V^-$ is a viscosity supersolution to (3.5).
\end{thm}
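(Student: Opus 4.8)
The plan is to transfer the subsolution property of $(V^-)^*$ (Theorem 5.2) into the supersolution property of $V^-$ by duality, exactly in the spirit of Cardaliaguet--Rainer \cite{CaRa2}. Fix $(\bar t,\bar x,\bar p)\in[0,T)\times\mathbb{R}^d\times\Delta(I)$ and a test function $\phi$ such that $V^--\phi$ has a strict global minimum at $(\bar t,\bar x,\bar p)$ with $V^-(\bar t,\bar x,\bar p)=\phi(\bar t,\bar x,\bar p)$. According to Definition~3.5, there is nothing to prove unless $\lambda_{\min}\!\left(\bar p,\frac{\partial^2\phi}{\partial p^2}(\bar t,\bar x,\bar p)\right)>0$, so assume this from now on. Then Lemma~4.7 applies: with $\hat p:=\frac{\partial\phi}{\partial p}(\bar t,\bar x,\bar p)$, the conjugate $(V^-)^*$ is differentiable in $\hat p$ at $(\bar t,\bar x,\hat p)$ with $\frac{\partial (V^-)^*}{\partial p}(\bar t,\bar x,\hat p)=\bar p$, and by (4.16) we have the contact relation $(V^-)^*(\bar t,\bar x,\hat p)=\langle\hat p,\bar p\rangle-V^-(\bar t,\bar x,\bar p)=\langle\hat p,\bar p\rangle-\phi(\bar t,\bar x,\bar p)$.

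Next I would build a test function for $(V^-)^*$ from $\phi$. The natural candidate is the partial Legendre transform $\psi(t,x,\hat q):=\sup_{p\in\Delta(I)}\{\langle\hat q,p\rangle-\phi(t,x,p)\}$, taken near $(\bar t,\bar x,\hat p)$; by the strict convexity estimate (4.14) coming from $\lambda_{\min}>0$ (via \cite{Ca}), this supremum is attained at an interior point $p(t,x,\hat q)$ depending smoothly on $(t,x,\hat q)$, with $p(\bar t,\bar x,\hat p)=\bar p$, so $\psi$ is a legitimate smooth test function locally. Since $V^-\ge\phi$ near $(\bar t,\bar x,\bar p)$ with equality at the point, inequality (4.15) gives $(V^-)^*\le\psi$ near $(\bar t,\bar x,\hat p)$ with equality at $(\bar t,\bar x,\hat p)$; thus $(V^-)^*-\psi$ has a local maximum there. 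Applying the subsolution property of $(V^-)^*$ (Theorem~5.2, equation (5.9)) to the test function $\psi$ at $(\bar t,\bar x,\hat p)$ then yields
\begin{equation*}
\max\Big\{\min\big\{(-\tfrac{\partial}{\partial t}-\mathcal{L})[\psi],\ \psi-\max_i\{\hat p_i-h_i(\bar x)\}\big\},\ \psi-\max_i\{\hat p_i-f_i(\bar x)\}\Big\}\le 0
\end{equation*}
at $(\bar t,\bar x,\hat p)$.

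Finally I would translate each of these three inequalities back through the involution $\psi\leftrightarrow\phi$ at the contact point. The envelope theorem gives $\frac{\partial\psi}{\partial t}(\bar t,\bar x,\hat p)=-\frac{\partial\phi}{\partial t}(\bar t,\bar x,\bar p)$ and similarly for the $D_x$ and $D_x^2$ terms (here one uses that the maximizer is interior and that $\frac{\partial\phi}{\partial p}(\bar t,\bar x,\bar p)=\hat p$, so the correction terms from differentiating $p(t,x,\hat p)$ drop out at the contact point), hence $(-\frac{\partial}{\partial t}-\mathcal{L})[\psi](\bar t,\bar x,\hat p)=-\,(-\frac{\partial}{\partial t}-\mathcal{L})[\phi](\bar t,\bar x,\bar p)$ — note the reversal of sign, which is exactly what turns ``subsolution'' into ``supersolution''. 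For the obstacle terms, $\psi(\bar t,\bar x,\hat p)=\langle\hat p,\bar p\rangle-\phi(\bar t,\bar x,\bar p)$ and $\max_i\{\hat p_i-h_i(\bar x)\}\ge\langle\hat p,\bar p\rangle-\langle h(\bar x),\bar p\rangle$ (indeed $\max_i a_i\ge\langle \bar p,a\rangle$ for $\bar p\in\Delta(I)$), so $\psi-\max_i\{\hat p_i-h_i(\bar x)\}\le 0$ becomes $\phi(\bar t,\bar x,\bar p)-\langle h(\bar x),\bar p\rangle\ge 0$, and likewise the $f$-obstacle inequality becomes $\phi(\bar t,\bar x,\bar p)-\langle f(\bar x),\bar p\rangle\ge 0$. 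A short case distinction (mirroring the one in Definition~3.5: if $\phi-\langle h,\bar p\rangle>0$ we must show $(-\frac{\partial}{\partial t}-\mathcal{L})[\phi]\ge 0$, which follows because then $\psi-\max_i\{\hat p_i-h_i\}<0$ forces the $\min$ above to equal $(-\frac{\partial}{\partial t}-\mathcal{L})[\psi]\le 0$) then assembles these into precisely the supersolution inequality of Definition~3.5 for $V^-$.

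The main obstacle is the second step: verifying rigorously that $\psi$ is a valid smooth test function for $(V^-)^*$ in a full neighbourhood of $(\bar t,\bar x,\hat p)$ — i.e.\ that the maximizer in the Legendre transform of $\phi$ stays interior to $\Delta(I)$ and depends smoothly on the parameters — and correctly handling the contact/ordering issue so that $(V^-)^*-\psi$ genuinely has a (strict, if needed) local maximum at the contact point rather than merely touching it. This is where the quantitative convexity estimate (4.14) from \cite{Ca} and the strictness of the extremum of $V^--\phi$ must be combined carefully; the sign-reversal bookkeeping in the envelope computation is routine once that is in place.
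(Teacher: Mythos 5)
Your high-level plan is the same as the paper's: invoke Lemma 4.7 to identify $\bar p$ as the gradient of $(V^-)^*$ at $\hat p$, conjugate the test function, and transfer the subsolution property of $(V^-)^*$ back to a supersolution property of $V^-$. However, several of your concrete steps contain genuine errors, and the key observation that makes the paper's argument close is absent.

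\textbf{The obstacle inequalities do not transfer the way you claim.} From $\psi(\bar t,\bar x,\hat p)\le\max_i\{\hat p_i-h_i\}$ and $\psi(\bar t,\bar x,\hat p)=\langle\hat p,\bar p\rangle-\phi(\bar t,\bar x,\bar p)$ you obtain $\phi(\bar t,\bar x,\bar p)\ge\langle\hat p,\bar p\rangle-\max_i\{\hat p_i-h_i\}$. Your auxiliary bound $\max_i\{\hat p_i-h_i\}\ge\langle\hat p,\bar p\rangle-\langle h,\bar p\rangle$ shows that the right-hand side is \emph{at most} $\langle h,\bar p\rangle$, not at least; the chain ``$\phi\ge a$ and $a\le b$'' does not give $\phi\ge b$, so $\phi\ge\langle h,\bar p\rangle$ does not follow. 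The same flaw affects the attempted $f$-obstacle transfer. In fact the conclusion you aim for is also not what Definition~3.5 asks for: the supersolution property requires $\phi\ge\langle f,\bar p\rangle$ (which the paper obtains directly from Proposition 4.3, without any duality), and, in case $\phi<\langle h,\bar p\rangle$, the PDE inequality; your ``$\phi-\langle h,\bar p\rangle>0$'' case has the sign backwards.

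\textbf{The missing ingredient is the strictness argument.} What actually drives the paper's proof is the observation that, whenever $\bar p\notin\{e_1,\dots,e_I\}$ and $\lambda_{\min}>0$, the conjugate is \emph{strictly} above its lower obstacle: $(V^-)^*(\bar t,\bar x,\hat p)>\max_i\{\hat p_i-h_i(\bar t,\bar x)\}$. This is shown by noting that $\hat p'\mapsto\max_i\{\hat p_i'-h_i\}$ is differentiable only at points where a unique index achieves the max, with derivative $e_{i_0}$; by Lemma 4.5, $(V^-)^*$ is differentiable at $\hat p$ with derivative $\bar p\ne e_{i_0}$, so equality is impossible. Strictness, combined with (4.16), simultaneously yields $V^-(\bar t,\bar x,\bar p)<\langle h,\bar p\rangle$ and forces the PDE branch of the subsolution condition of $(V^-)^*$ to activate. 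Your proposal does not contain this step and therefore cannot rule out the obstacle branch of the $\min$.

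\textbf{Two smaller gaps.} First, the envelope computation for the \emph{second} $x$-derivative of $\psi$ does not reduce cleanly to $D^2_x\psi=-D^2_x\phi$: differentiating the maximizer $p^*(t,x,\hat q)$ produces a correction term $D_{xp}\phi\,(D^2_p\phi)^{-1}D_{px}\phi$ which does not vanish at the contact point, so the transfer of the operator $\mathcal{L}$ requires the more careful sign-inequality argument that the paper delegates to \cite{Ca}. Second, the case $\bar p=e_i$ is not covered by your argument (the interiority of the Legendre maximizer, and Lemma 4.7 itself, are not available at a vertex of the simplex); the paper handles that case separately by reducing to the classical complete-information variational PDE.
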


\begin{proof}
Assume that $p=e_i$ for an $i\in\{1,\ldots,I\}$, where $e_i$ denotes the $i$-th coordinate vector in $\mathbb{R}^I$. Then (5.9) reduces to the PDE for a game with complete information, i.e.
\begin{equation}
\begin{array}{l}
	\max\{\min\{(-\frac{\partial } {\partial t}-\mathcal{L})[w],w-f_i(t,x)\},w- h_i(t,x)\}=0
\end{array}
\end{equation}
with terminal condition $w(T,x,p)=g_i(x)$ and the result is standard.\\

Let $\bar p\not\in\{e_i,\ i=1,\ldots,I\}$ and $\phi:[0,T]\times\mathbb{R}^d\times\Delta(I)\rightarrow \mathbb{R}$ such that $V^--\phi$ has a strict global minimum at $(\bar{t},\bar x,\bar p)\in [0,T)\times\mathbb{R}^d\times\Delta(I)$ with $V^-(\bar{t},\bar x,\bar p)-\phi(\bar{t},\bar x,\bar p)=0$. We have to show
 \begin{equation}
				\begin{array}{l}
	\max\big\{\max\{\min\{(-\frac{\partial } {\partial t}-\mathcal{L})[\phi],\phi-\langle f(t,x),p\rangle\},\\
	\ \\
	 \ \ \ \ \ \ \ \ \ \ \ \ \ \ \  \ \ \ \ \ \ \ \ \ \ \ \ \ \ \ \phi-\langle h(t,x),p\rangle\}, -\lambda_{\min}\left(p,\frac{\partial ^2 \phi}{\partial p^2}\right)\big\}\geq0
		\end{array}
\end{equation}
at $(\bar{t},\bar x,\bar p)$. If
\begin{equation*}
 \lambda_{\min}\left(p,\frac{\partial ^2 \phi}{\partial p^2}\right)\leq0
\end{equation*}
at $(\bar{t},\bar x,\bar p)$ (5.11) obviously holds. So assume
\begin{equation}
 \lambda_{\min}\left(p,\frac{\partial ^2 \phi}{\partial p^2}\right)>0.
\end{equation}
 Note that by Proposition 4.3. we have that $V^+(\bar t,\bar x,\bar p)-\langle f(\bar t,\bar x),p\rangle=\phi(\bar t,\bar x,\bar p)-\langle f(\bar t,\bar x),\bar p\rangle\geq 0$. So to show (5.11) it remains to show, that for $\phi(\bar t,\bar x,\bar p)<\langle h(t,x),p\rangle$, we have that
\begin{equation}
(-\frac{\partial } {\partial t}-\mathcal{L})[\phi](\bar{t},\bar x,\bar p)\geq0.
\end{equation}
Recall that (5.12) implies by Lemma 4.5. that $(V^-)^*(\bar t,\bar x,\hat p)$ is differentiable at $\hat p:=\frac{\partial\phi}{\partial p} (\bar t,\bar x,\bar p)$  with a derivative equal to $\frac{\partial (V^-)^*(\bar t,\bar x,\hat p)}{\partial \hat p}=\bar p$.\\
From Proposition 4.8. we have
\begin{equation}
\begin{array}{rcl}
(V^-)^*(\bar t,\bar x,\hat p)
&\geq&\max_{i\in\{1,\ldots,I\}}\{\hat p_i-h_i(\bar t,\bar x)\}.
\end{array}
\end{equation}
Indeed we have strict inequality in (5.14) for  $\bar p\not\in\{e_i,\ i=1,\ldots,I\}$. Assume that
\begin{equation}
\begin{array}{rcl}
(V^-)^*(\bar t,\bar x,\hat p)
&=&\max_{i\in\{1,\ldots,I\}}\{\hat p_i-h_i(\bar t,\bar x)\}.
\end{array}
\end{equation}
Since $\max_{i\in\{1,\ldots,I\}}\{\hat p_i-h_i(\bar t,\bar x)\}$ is convex in $\hat p$, we would have that $\max_{i\in\{1,\ldots,I\}}\{\hat p_i-h_i(\bar t,\bar x)\}$ is also differentiable at $\hat p$ with a derivative equal to $\frac{\partial (V^-)^*(\bar t,\bar x,\hat p)}{\partial \hat p}=\bar p$.\\
However the map $\hat p'\to \max_{i\in\{1,\ldots,I\}}\{\hat p_i'-h_i(\bar t,\bar x)\}$ is only differentiable at points for which there is a unique $i_0\in \{1, \dots, I\}$ such that $\max_{i\in\{1,\ldots,I\}}\{\hat p_i'-h_i(\bar t,\bar x)\}= \hat p_{i_0}'-h_{i_0}(\bar t,\bar x)$ and in this case its derivative is given by $e_{i_0}$. This is impossible since $\bar p\neq e_{i_0}$. Therefore
\begin{equation}
\begin{array}{rcl}
(V^-)^*(\bar t,\bar x,\hat p)
&> &\max_{i\in\{1,\ldots,I\}}\{\hat p_i-h_i(\bar t,\bar x)\}
\end{array}
\end{equation}
holds, which implies with (4.16)
\begin{equation}
\begin{array}{rcl}
V^-(\bar t,\bar x,\bar p)&<&\langle \hat p,\bar p\rangle-\max_{i\in\{1,\ldots,I\}}\{\hat p_i-h_i(\bar t,\bar x)\}\\
\ \\
&=&\langle \hat p,\bar p\rangle+\min_{i\in\{1,\ldots,I\}}\{-\hat p_i+h_i(\bar t,\bar x)\}\\
\ \\
&\leq&\langle h(\bar t,\bar x),\bar p\rangle.
\end{array}
\end{equation}
If we now recall the dynamic programming for $(V^-)^*$ with setting $\sigma=t$, i.e.
\begin{equation}
\begin{array}{rcl}
&&(V^-)^*(\bar t,\bar x,\hat p) \leq \sup_{\tau\in\mathcal{T}(\bar t,t)} \mathbb{E}\bigg[\max_{i\in\{1,\dots, I\}} \{\hat p_i-h_i(X^{\bar t,\bar x}_{ \tau} )\}1_{ \tau<t}\\
&&\ \ \ \ \ \ \ \ \ \ \ \ \ \ \ \ \ \ \ \ +(V^-)^*(t,X^{\bar t,\bar x}_t, \hat p) 1_{\tau=t}\bigg],
\end{array}
\end{equation}
we have with the upper bound of $(V^-)^*$ (5.16) that $(V^-)^*$ has the viscosity subsolution property to
\begin{equation}
(-\frac{\partial } {\partial t}-\mathcal{L})[w]=0
\end{equation}
at $(\bar t,\bar x,\hat p)$. And as in \cite{Ca} $V^-$ has the viscosity supersolution property to (5.19) at $(\bar t,\bar x,\bar p)$, hence (5.13) holds.
\end{proof}

\subsection{Viscosity solution property of the value function}

To establish Theorem 3.2. it remains with Remark 3.3. to show that $V^-\geq V^+$. This is however a direct consequence of Theorem 5.1. and Theorem 5.2. together with the comparison Theorem 3.7.. We then have the following characterization of the value.
\begin{cor}
The value function $V:[0,T]\times \mathbb{R}^d\times\Delta(I)\rightarrow\mathbb{R}$ is the unique viscosity solution to (3.5) in the class of bounded, uniformly continuous functions, which are uniformly Lipschitz continuous in $p$.
\end{cor}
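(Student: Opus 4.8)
The plan is to assemble the corollary from the results already established, without any new analysis. \emph{Existence.} First I would invoke Theorem 3.2, which gives that the game has a value and $V = V^+ = V^-$; consequently $V$ inherits \emph{simultaneously} the viscosity subsolution property of $V^+$ from Theorem 5.1 and the viscosity supersolution property of $V^-$ from Theorem 5.2, so $V$ is a viscosity solution to (3.5). Evaluating the payoff at the terminal time gives $J(T,x,p,\mu,\nu) = \langle p, g(x)\rangle$ for every $\mu,\nu$, hence $V(T,x,p) = \langle p, g(x)\rangle$, i.e. $V$ meets the prescribed terminal condition. Finally Proposition 4.1, together with the boundedness of the data in Assumption (A), shows that $V$ is bounded, uniformly continuous, and uniformly Lipschitz continuous in $p$, so $V$ lies in the class in which uniqueness is asserted.

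\emph{Uniqueness.} Next I would take an arbitrary bounded, uniformly continuous viscosity solution $W$ to (3.5) that is uniformly Lipschitz continuous in $p$; since it is a solution, $W$ is at once a subsolution and a supersolution, and it satisfies $W(T,x,p) = \langle p, g(x)\rangle = V(T,x,p)$. I would then apply the comparison principle Theorem 3.7 twice: once with $w_1 := W$ (as subsolution) and $w_2 := V$ (as supersolution), using $W(T,x,p)\le V(T,x,p)$ to obtain $W \le V$ on $[0,T]\times\mathbb{R}^d\times\Delta(I)$; and once with the roles reversed, $w_1 := V$ (which is a subsolution, being equal to $V^+$) and $w_2 := W$ (supersolution), using $V(T,x,p)\le W(T,x,p)$ to obtain $V \le W$. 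Combining the two inequalities yields $V = W$, the desired uniqueness, and hence $V$ is characterized as the unique such solution.

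I do not expect a genuine obstacle here: all the analytic content sits in the subsolution/supersolution statements (Theorems 5.1 and 5.2) and in the comparison Theorem 3.7. The only point demanding care is the bookkeeping of regularity classes — one must verify that both functions handed to Theorem 3.7 are uniformly Lipschitz in $p$, which is automatic for $V^{\pm}$ by Proposition 4.1 and is part of the hypothesis for the competitor $W$ — and, relatedly, that $V$ itself is genuinely both a sub- and a supersolution, which is exactly the force of the identity $V = V^+ = V^-$.
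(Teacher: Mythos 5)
Your argument is correct and follows essentially the same route as the paper: the paper itself deduces $V^+\le V^-$ from Theorems 5.1, 5.2 and the comparison Theorem 3.7, concludes $V=V^+=V^-$ is both a sub- and a supersolution, and then obtains uniqueness from the same comparison result. Your more explicit assembly — verifying the terminal condition, invoking Proposition 4.1 for boundedness/uniform continuity/Lipschitz in $p$, and applying Theorem 3.7 in both directions against a competitor $W$ — is exactly the intended proof spelled out in full.
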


\section{Alternative representation}

In a second part we use the PDE characterization to establish a representation of the value function via a minimization procedure over certain martingale measures. To do so we  enlarge the canonical Wiener space to a space which carries besides a Brownain motion $B$ a new dynamic $\bold p$. We use this additional dynamic to model the incorporation of the private information into the game. More precisely we model the probability in which scenario the game is played in according to the information of the uninformed Player 2.

\subsection{Enlargement of the canonical space}

To that end let us denote by $\mathcal{D}([0,T];\Delta(I))$ the set of c\`adl\`ag functions from $\mathbb{R}$ to $\Delta(I)$, which are constant on $(-\infty,0)$ and on $[T , +\infty)$. We denote by $\bold p_s(\omega_p)=\omega_{{p}}(s)$ the coordinate mapping on $\mathcal{D}([0,T];\Delta(I))$ and by $\mathcal{G}=(\mathcal{G}_s)$ the filtration generated by $s \mapsto \bold p_s$.
Furthermore we recall that $\mathcal{C}([0,T];\mathbb{R}^d)$ denotes the set of continuous functions from $\mathbb{R}$ to $\mathbb{R}^d$, which are constant on $(-\infty,0]$ and on $[T,+\infty)$. We denote by $B_s(\omega_B)=\omega_B(s)$ the coordinate mapping on $\mathcal{C}([0,T];\mathbb{R}^d)$ and by $\mathcal{H}=(\mathcal{H}_s)$ the filtration generated by $s \mapsto B_s$.
We equip the product space $\Omega:=\mathcal{D}([0,T];\Delta(I))\times \mathcal{C}([0,T];\mathbb{R}^d)$ with the right-continuous filtration $\mathcal{F}$, where $\mathcal{F}_t=\cap_{s>t} \mathcal{F}^0_t$ with $(\mathcal{F}^0_s)=(\mathcal{G}_s)\otimes(\mathcal{H}_s)$.  In the following we shall, whenever we work under a fixed probability $\mathbb{P}$ on $\Omega$, complete the filtration $\mathcal{F}$ with $\mathbb{P}$-nullsets without changing the notation.\\

For $0\leq t\leq T$ we denote $\Omega_t=\mathcal{D}([t,T];\Delta(I))\times\mathcal{C}([t,T];\mathbb{R}^d)$ and $\mathcal{F}_{t,s}$ the (right-continuous) $\sigma$-algebra generated by paths up to time $s\geq t$ in $\Omega_t$. Furthermore we define the space 
\[\Omega_{t,s}=\mathcal{D}([t,s];\Delta(I))\times\mathcal{C}([t,s];\mathbb{R}^d)\]
for $0\leq t\leq s \leq T$. If $r\in(t,T]$ and $\omega\in\Omega_t$ then let
\begin{eqnarray*}
\omega_1=1_{[-\infty,r)}\omega\ \ \ \ \ \ \ \ \ \ 
\omega_2=1_{[r,+\infty]}(\omega-\omega_{r-})
\end{eqnarray*}
and denote $\pi\omega=(\omega_1,\omega_2)$. The map $\pi:\Omega_t\rightarrow\Omega_{t,r}\times\Omega_{r}$ induces the identification
	$\Omega_t=\Omega_{t,r}\times\Omega_{r}$
moreover $\omega=\pi^{-1}(\omega_1,\omega_2)$, where the inverse is defined in an evident way.\\

For any measure $\mathbb{P}$ on $\Omega$, we denote by $\mathbb{E}_\mathbb{P}[\cdot]$ the expectation with respect to $\mathbb{P}$. We equip $\Omega$ with a certain class of measures.

\begin{defi}
Given $p\in\Delta(I )$, $t\in[0,T]$, we denote by $\mathcal{P}(t,p)$ the set of probability measures $\mathbb{P}$ on $\Omega$
such that, under $\mathbb{P}$
\begin{itemize}
\item[(i)]	$\bold p$ is a martingale, such that
	$\bold p_s = p$  $\forall s< t$,
	$\bold p_s \in\{e_i , i = 1, \ldots , I \}$  $\forall s \geq T$   $\mathbb{P}$-a.s., where $e_i$ denotes the $i$-th coordinate vector in $\mathbb{R}^I$, and
 $\bold p_T$ is independent of $(B_s)_{s\in(-\infty,T]},$
 \item[(ii)]	$(B_s)_{s \in [0,T]}$ is a Brownian motion. 
\end{itemize}
\end{defi}

\begin{com}
Assumption (ii) is naturally given by the Brownian structure of the game. Assumption (i) is motivated as follows. Before the game starts the information of the uninformed player is just the initial distribution $p$. The martingale property, implying $\bold p_t=\mathbb{E}_{\mathbb{P}}[\bold p_T|\mathcal{F}_t]$, is due to the best guess of the uninformed player about the scenario he is in. Finally, at the end of the game the information is revealed hence $\bold p_T \in\{e_i , i = 1, \ldots , I \}$ and  since the scenario is picked before the game starts the outcome $\bold p_T $ is independent of the Brownian motion. 
\end{com}

\subsection{Auxiliary games and representation}

From now on we will consider stopping times on the enlarged space  $\Omega=\mathcal{D}([0,T];\Delta(I))\times\mathcal{C}([0,T];\mathbb{R}^d)$ .

\begin{defi}
At time $t\in[0,T]$ an admissible stopping time for either player is a $(\mathcal{F}_{s})_{s\in[t,T]}$ stopping time with values in $[t,T]$. We denote the set of admissible stopping times by $\bar{\mathcal{T}}(t,T)$. In the following we shall omit $T$ in the notation whenever it is obvious.
\end{defi}
We note that in contrast to Definition 2.2. the admissible stopping times at time $t$ might now also depend on the paths of the Brownian motion before time $t$.\\

One can now consider a stopping game with this additional dynamic, namely with a payoff given by
\begin{equation}
\begin{array}{rcl}
J(t,x,\tau,\sigma,\mathbb{P})_{t-}&:=&\mathbb{E}_\mathbb{P}\bigg[\langle \bold p_\sigma,f(\sigma,X^{t,x}_\sigma)\rangle1_{\sigma<\tau,\sigma<T}+\langle \bold p_\tau, h(\tau,X^{t,x}_\tau)\rangle1_{ \tau\leq\sigma,\tau<T}\\
&&\ \ \ \ \ \ \ \ \ \ \ \ \ \ \ \ \ \ \ \ \ \ \ \ \ \ \ \ \ + \langle \bold p_T,g(X^{t,x}_T)\rangle 1_{\sigma=\tau=T}|\mathcal{F}_{t-}\bigg],
\end{array}
\end{equation}
 where $\tau\in\bar{\mathcal{T}}(t)$ denotes the stopping time choosen by Player 1, who minimizes, and  $\sigma\in\bar{\mathcal{T}}(t)$ denotes the stopping time choosen by Player 2, who maximizes the expected outcome. In contrast to the previous consideration here we are only working with non randomized stopping times. Indeed the randomization is in some sense shifted to the additional dynamic $\bold p$.\\
 
 Note that the known results in literature do not imply that these games have a value for any fixed $\mathbb{P}\in\mathcal{P}(t,p)$, i.e.
 \begin{equation}
 \begin{array}{rcl}
&&\textnormal{esssup}_{\sigma \in \bar{\mathcal{T}}(t)} \textnormal{essinf}_{\tau\in \bar{\mathcal{T}}(t)}J(t,x,\tau,\sigma,\mathbb{P})_{t-}\\
\ \\
&&\ \ \ \ \ \ \ \ \ \ \ =  \textnormal{essinf}_{\tau\in \bar{\mathcal{T}}(t)}\textnormal{esssup}_{\sigma \in \bar{\mathcal{T}}(t)}J(t,x,\tau,\sigma,\mathbb{P})_{t-}.
 \end{array}
\end{equation}
  Indeed since $\bold p$ is only assumed to be c\`adl\`ag the theorems of \cite{HaLe2} or \cite{HH} requiring basically the continuity of $\bold p$ do not apply. For us however it is for now not important since our first goal is an alternative representation of the value function, for which we have a PDE representation. Since $\bold p$ can be interpreted as a manipulation of the uninformed player by the informed one the outcome of the game should be some minimum in this manipulation.\\
 
 Fix $t\in[0,T]$, $x\in\mathbb{R}^d$, $p\in\Delta(I)$.  Note that all $\mathbb{P}\in \mathcal{P}(t,p)$ are equal on $\mathcal{F}_{t-}$, i.e. the distribution of $(B_s,\bold p_s)$ on $[0,t)$ is given by $\delta(p)\otimes\mathbb{P}_0$, where $\delta(p)$ is the measure under which $\bold p$ is constant and equal to $p$ and $\mathbb{P}_0$ is the Wiener measure on $\Omega_{0,t}$. So we can identify each $\mathbb{P}\in \mathcal{P}(t,p)$ on $\mathcal{F}_{t-}$ with a common probability measure $\mathbb{Q}$ and define $\mathbb{Q}\textnormal{-a.s.}$ the lower value function
\begin{equation}
\begin{array}{l}
W^-(t,x,p)=\textnormal{essinf}_{\mathbb{P}\in\mathcal{P}(t,p)} \textnormal{esssup}_{\sigma \in \bar{\mathcal{T}}(t)} \textnormal{essinf}_{\tau\in \bar{\mathcal{T}}(t)}J(t,x,\tau,\sigma,\mathbb{P})_{t-}
\end{array}
\end{equation}
and the upper value function
\begin{equation}
\begin{array}{l}
		 			W^+(t,x,p) =\textnormal{essinf}_{\mathbb{P}\in\mathcal{P}(t,p)}  \textnormal{essinf}_{\tau\in \bar{\mathcal{T}}(t)}\textnormal{esssup}_{\sigma \in \bar{\mathcal{T}}(t)}J(t,x,\tau,\sigma,\mathbb{P})_{t-},
\end{array}
\end{equation}
where by definition we have $W^-(t,x,p)\leq W^+(t,x,p)$.

\begin{thm}
 For any $(t,x,p)\in[0,T]\times\mathbb{R}^d\times\Delta(I)$ we have that
		  \begin{equation}
		 			W(t,x,p):=W^+(t,x,p)=W^-(t,x,p).
		\end{equation}
		 Furthermore the value of the Dynkin game with incomplete information can be written as
		 \begin{equation}
		 			V(t,x,p)=W(t,x,p).
		\end{equation}
\end{thm}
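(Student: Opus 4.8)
The plan is to show that $W$, like $V$, is the unique bounded, uniformly-continuous-in-$t$, uniformly-Lipschitz-in-$p$ viscosity solution of the obstacle PDE (3.5), and then invoke the comparison Theorem 3.7 to conclude $V=W$. Since equation (6.6) will be an immediate consequence of (6.5) together with Corollary 5.3 once we know that $W$ solves (3.5), the real work is entirely in establishing (6.5) and the viscosity-solution property. I would first record the elementary structural facts: by the very definitions $W^-\le W^+$, and both $W^\pm$ take values in $[\langle f(t,x),p\rangle,\langle h(t,x),p\rangle]$ because for any $\mathbb{P}\in\mathcal P(t,p)$ the inner Dynkin value lies between the two obstacles (the terminal condition $W(T,x,p)=\langle p,g(x)\rangle$ follows similarly from independence of $\bold p_T$). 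Next I would prove the regularity of $W^\pm$: boundedness and Lipschitz continuity in $x$ are inherited from the Lipschitz estimates on $X^{t,x}$ and on $f,h,g$; Lipschitz continuity in $p$ is inherited because two measures $\mathbb P\in\mathcal P(t,p)$ and $\mathbb P'\in\mathcal P(t,p')$ can be coupled so that $\bold p$ differs by roughly $|p-p'|$; H\"older continuity in $t$ is proved exactly as in Proposition 4.1, by freezing the extra time interval and estimating with Assumption (A).

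\textbf{The dynamic programming / PDE step.} The substantive part is a dynamic programming principle for $W^+$ and for (the convex conjugate of) $W^-$, leading to the subsolution and supersolution properties. For $W^+$: a sub-dynamic programming inequality of the form (4.7), with $V^+$ replaced by $W^+$, should hold because one can restrict $\mathbb P$ on $[\bar t,t)$ to keep $\bold p$ constant equal to $p$ (the cost of doing so is zero by the martingale/convexity argument, exactly the point that makes $W$ a \emph{minimum} over the manipulation) and then paste in $\epsilon$-optimal measures and stopping times after time $t$; the Markov property of $(X,\bold p)$ and the partition argument of Theorem 4.8 close it. From this sub-DPP one obtains, verbatim as in Theorem 5.1, that $W^+$ is a viscosity subsolution of (3.5): the $\lambda_{\min}\ge 0$ branch comes from convexity of $p\mapsto W^+(t,x,p)$ (itself inherited from concatenating martingale measures, as in Proposition 4.2), the $w-\langle h,p\rangle\le 0$ branch from the a priori bound, and the parabolic branch from an It\^o expansion as in (5.4)--(5.5). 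For $W^-$ one passes, as in Section 4.3--4.4, to the convex conjugate $(W^-)^*$ in the $p$-variable; here the key identity is that $(W^-)^*(t,x,\hat p)$ equals the right-hand side of (4.18) --- i.e.\ the conjugate collapses the $\textnormal{essinf}_{\mathbb P}$ into the explicit finite max over $i$ --- which one proves by the same bidual argument ($((W^-)^*)^* = W^-$ by convexity). Then the sub-DPP for $(W^-)^*$ (analogue of Theorem 4.10) gives, as in Theorems 5.1$'$ and 5.2, that $W^-$ is a viscosity supersolution of (3.5).

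\textbf{Conclusion and main obstacle.} Once $W^+$ is a subsolution and $W^-$ a supersolution of (3.5), both with the same terminal data $\langle p,g(x)\rangle$ and both bounded, continuous, uniformly Lipschitz in $p$, the comparison Theorem 3.7 yields $W^+\le W^-$; combined with $W^-\le W^+$ this gives (6.5). Applying comparison once more against $V$ --- which by Corollary 5.3 is the unique such solution --- gives $V=W$, i.e.\ (6.6). I expect the main obstacle to be the careful justification that one may, at zero cost, replace an arbitrary $\mathbb P\in\mathcal P(t,p)$ on a small initial interval $[\bar t,t)$ by the measure under which $\bold p$ stays frozen at $p$, and more generally the measurable pasting of $\epsilon$-optimal measures $\mathbb P^j$ (indexed by the partition cell $A^j$ containing $X_t$) into a single element of $\mathcal P(t,p)$; this is the place where the c\`adl\`ag-only (not continuous) nature of $\bold p$, emphasised in the text, must be handled, since the convenient RBSDE machinery is unavailable and one must argue directly with the conditional structure of the martingale measures and the independence of $\bold p_T$ from $B$. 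A secondary technical point is verifying that the esssup/essinf over $\bar{\mathcal T}(t)$ (stopping times that may see the Brownian path before $t$) produces a deterministic function of $(t,x,p)$ after the outer $\textnormal{essinf}_{\mathbb P}$; this follows because all $\mathbb P\in\mathcal P(t,p)$ agree on $\mathcal F_{t-}$ and the minimal manipulation washes out the pre-$t$ dependence, but it should be stated carefully.
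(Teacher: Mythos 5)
Your overall skeleton (regularity estimates, dynamic programming, viscosity sub/super-solution properties, then the comparison Theorem 3.7 to conclude $W^+\le W^-$ and $V=W$) matches the paper's, and your remarks on regularity, convexity, the $\mathbb{P}_0\otimes\delta(\bar p)$ specialization in the subsolution step, the deterministicness of $W^\pm$ via Cameron--Martin perturbation, and the need for measurable pasting of $\epsilon$-optimal measures are all on target.

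Where you diverge --- and where there is a genuine gap --- is the supersolution property of $W^-$. You propose to mirror Sections 4.3--4.4 by passing to the convex conjugate $(W^-)^*$ and proving a sub-DPP for it, asserting that $(W^-)^*$ ``equals the right-hand side of (4.18)'' and that this follows from the bidual identity $((W^-)^*)^*=W^-$. The bidual identity only gives back $W^-$; it does \emph{not} give the explicit representation (4.18). In the $V^-$ case the formula for $(V^-)^*$ was derived from the structural rewriting $V^-=\sup_\nu\sum_i p_i\inf_\mu J_i$, which is linear in $p$ inside the sup; $W^-$ has no such form: its $p$-dependence enters through the initial value of the martingale $\bold p$ inside an $\textnormal{essinf}_{\mathbb P}\,\textnormal{esssup}_\sigma\,\textnormal{essinf}_\tau$, and it is not obvious that the conjugate collapses to a finite max over $i$. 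Of course $(V^-)^*=(W^-)^*$ would follow \emph{once} $V=W$ is proved, but that is what you are trying to establish, so it cannot be assumed. The paper sidesteps this entirely: the whole point of the $W$-formulation is that a \emph{super}-dynamic programming principle for $W^-$ can be proved directly (Theorem 7.4), precisely because the minimization over $\mathbb P\in\mathcal P(t,p)$ and the essinf over $\tau$ are both on the same side. The supersolution property is then proved directly (Theorem 8.2): setting $\sigma=t$ in the super-DPP, combining with the strict-convexity estimate (8.8) to control $\mathbb{E}|\bold p_{\tau^\epsilon-}-\bar p|^2$, and running an It\^o expansion in which the sum over jumps of $\bold p$ is nonnegative by the convexity of the (suitably extended) test function $\phi$ in $p$. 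This jump-handling is a key ingredient your proposal does not anticipate, and it is also where the c\`adl\`ag-only nature of $\bold p$ actually bites; you correctly flag c\`adl\`ag-ness as an issue but locate the difficulty in the wrong place (measure-pasting rather than the It\^o argument for $\bold p$). In short: replace your conjugate route for $W^-$ by the direct super-DPP and the It\^o-with-jumps argument, and the proposal becomes the paper's proof.
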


To prove the theorem we establish a subdynamic programming for $W^+$ and a superdynamic programming principle for $W^-$. Then we show that $W^+$ is a subsolution and $W^-$ a supersolution to the PDE (3.5). After establishing that $W^+$ and $W^-$ are bounded, uniformly continuous functions, which are uniformly Lipschitz continuous in $p$, the comparison result Theorem 3.7. gives us the equalities (6.5) and (6.6).

\subsection{Optimal strategies for the informed player}

The motivation for the alternative representation is that, as in \cite{CaRa1}, \cite{CG}  it allows to determine optimal strategies for the informed player. Indeed, if we assume that there exists a $\bar{\mathbb{P}}\in\mathcal{P}(t,p)$, such that  
\begin{equation}
\begin{array}{rcl}
V(t,x,p)&=& \textnormal{essinf}_{\tau\in \bar{\mathcal{T}}(t)}\textnormal{esssup}_{\sigma \in \bar{\mathcal{T}}(t)}J(t,x,\tau,\sigma,\bar{\mathbb{P}})_{t-},
\end{array}
\end{equation}
then we can define for any scenario $i\in\{1,\ldots,I\}$ a probability measure $\bar{\mathbb{P}}_i$ by: 
for all $A\in\mathcal{F}$ we have that
 \[\bar{\mathbb{P}}_i[A]= \bar{\mathbb{P}}[A|\bold{p}_T=e_i]=\frac{1}{p_i}  \bar{\mathbb{P}} [A\cap\{\bold{p}_T=e_i\}],\ \ \textnormal {if } p_i>0,\] 
and $\bar{\mathbb{P}}_i[A]= \bar{\mathbb{P}}[A]$ else. It is clear by Definition 6.1. that $B$ is still a Brownian motion under $\mathbb{P}^i$.\\
We note that the right-continuity of $\bold p$ allows to define the stopping time $\tau^*=\inf\{s\in[0,T], (s,X^{t,x}_s,\bold p_s)\in D\}$, where $D=\{(t,x,p)\in[0,T]\times\mathbb{R}^d\times\Delta(I): V(t,x,p)\geq\langle h(t,x),p\rangle\}$ is a closed set by the continuity of $V$ and $g$.\\
The couple $(\tau^*,\bar{\mathbb{P}}_i)$ then defines a randomized stopping time for the first player. Indeed, for each state of nature $i\in\{1,\ldots, I\}$ the informed player stops when $(s,X^{t,x}_s,\bold p_s)$ enters $D$ under $\bar{\mathbb{P}}_i$, where $X^{t,x}$ is the diffusion both players observe and $\bold p$ under $\bar{\mathbb{P}}_i$ represents his own randomization device.
 
\begin{thm}
For any scenario $i=1,\ldots,I$ and any stopping time of the uninformed player $\sigma\in\bar{\mathcal{T}}(t)$ playing  $(\tau^*,\bar{\mathbb{P}}_i)$ is optimal for the informed player in the sense that 
\begin{equation}
\begin{array}{l}
\sum_{i=1}^I p_i\mathbb{E}_{\bar{ \mathbb{P}}_i} \bigg[f_i(\sigma,X^{t,x}_\sigma)1_{\sigma<\tau^*,\sigma<T}\\
			\ \ \ \ \ \ \ \ \ \ \ \ \ \ \ \ \ +h_i(\tau^*,X^{t,x}_{\tau^*})1_{ \tau^*\leq\sigma,\tau^*<T}+ g_i(X^{t,x}_T) 1_{\tau^*=\sigma=T}\bigg]\leq V(t,x,p).
\end{array}
\end{equation}
\end{thm}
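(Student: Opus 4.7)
The proof decomposes into a payoff identity and an upper-bound verification.

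\textbf{Step 1 (payoff identity).} I would first show that
$$ \sum_{i=1}^I p_i \mathbb{E}_{\bar{\mathbb{P}}_i}\!\left[\textnormal{payoff}_i(\tau^*,\sigma)\right] = J(t,x,\tau^*,\sigma,\bar{\mathbb{P}})_{t-} $$
by combining the disintegration $\sum_i p_i \bar{\mathbb{P}}_i = \bar{\mathbb{P}}$ --- immediate from Definition 6.1(i) and the formula $\bar{\mathbb{P}}_i = \bar{\mathbb{P}}(\cdot\,|\,\bold p_T=e_i)$ --- with the $\bar{\mathbb{P}}$-martingale property of $\bold p$. Concretely, for any bounded $\mathcal{F}$-measurable vector $\Phi=(\Phi_i)$ one obtains $\sum_i p_i\mathbb{E}_{\bar{\mathbb{P}}_i}[\Phi_i] = \mathbb{E}_{\bar{\mathbb{P}}}[\langle \bold p_T,\Phi\rangle]$; applied to each of the three payoff terms (for $f$, $h$ and $g$) and then followed by a conditioning on $\mathcal{F}_\sigma$ resp.\ $\mathcal{F}_{\tau^*}$ to replace $\mathbb{E}_{\bar{\mathbb{P}}}[\bold p_T\,|\,\mathcal{F}_\cdot]$ by $\bold p_\cdot$ via optional sampling, this yields $J(t,x,\tau^*,\sigma,\bar{\mathbb{P}})_{t-}$ on the nose.

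\textbf{Step 2 (upper bound).} It then remains to prove $J(t,x,\tau^*,\sigma,\bar{\mathbb{P}})_{t-}\leq V(t,x,p)$ for every $\sigma\in\bar{\mathcal{T}}(t)$. By the hypothesis (6.7), $V(t,x,p) = \textnormal{essinf}_{\tau}\,\textnormal{esssup}_{\sigma} J(t,x,\tau,\sigma,\bar{\mathbb{P}})_{t-}$, so it suffices to check that $\tau^*$ attains this essinf. Consider the Markovian value process $Y_s := V(s, X^{t,x}_s, \bold p_s)$ under $\bar{\mathbb{P}}$. The key technical step is to show that $Y_{\cdot\wedge\tau^*}$ is a $\bar{\mathbb{P}}$-supermartingale. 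Granted this, optional sampling at $\sigma\wedge\tau^*$ gives
$$ V(t,x,p) = Y_t \geq \mathbb{E}_{\bar{\mathbb{P}}}[Y_{\sigma\wedge\tau^*} \mid \mathcal{F}_{t-}], $$
and three boundary identities finish the argument: on $\{\sigma<\tau^*,\sigma<T\}$, Prop.~4.3 yields $Y_\sigma\geq\langle \bold p_\sigma,f(\sigma,X_\sigma)\rangle$; on $\{\tau^*\leq\sigma,\tau^*<T\}$, closedness of $D$ together with right-continuity of $(X,\bold p)$ and continuity of $V$ and $h$ force $Y_{\tau^*}=\langle \bold p_{\tau^*},h(\tau^*,X_{\tau^*})\rangle$; and at $\tau^*=\sigma=T$ the terminal condition gives $Y_T=\langle \bold p_T,g(X_T)\rangle$. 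Dominating the $J$-integrand termwise by $Y_{\sigma\wedge\tau^*}$, then combining with the identity of Step 1, concludes (6.8).

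\textbf{Main obstacle.} The delicate step is the $\bar{\mathbb{P}}$-supermartingale property of $Y_{\cdot\wedge\tau^*}$. Since $V$ is only a viscosity solution to (3.5) and $\bold p$ is merely c\`adl\`ag under $\bar{\mathbb{P}}$, neither It\^o's formula nor the reflected-BSDE framework of \cite{HaLe2,HH} (which, as noted in the introduction, essentially requires continuity of $\bold p$) is available off the shelf. The heuristic picture is that on $[t,\tau^*)$ the PDE (3.5) gives $(-\partial_t-\mathcal{L})V\geq 0$ --- with equality on $\{V>\langle f,p\rangle\}$ and the inequality $\geq 0$ on $\{V=\langle f,p\rangle\}$ --- producing a supermartingale drift in $X$, while the optimality of $\bar{\mathbb{P}}\in\mathcal{P}(t,p)$ forces $\bold p$ to move only in directions along which $V$ is locally affine in $p$, killing the would-be submartingale contribution from the convexity $\lambda_{\min}(p,\partial^2_{pp}V)\geq 0$. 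The rigorous justification would proceed by smoothing $V$ from above by a family of classical supersolutions to (3.5) (e.g., via sup-convolution in $(t,x)$ together with a convexity-preserving regularization in $p$), applying It\^o to the approximations, and passing to the limit using the comparison principle Thm 3.7; the jumps of $\bold p$ must be handled via the general It\^o formula for semimartingales, with the convexity of $V$ ensuring the sign of the jump contributions.
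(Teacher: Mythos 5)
Your Step 1 is exactly the paper's argument: the paper writes $\sum_i p_i\mathbb{E}_{\bar{\mathbb{P}}_i}[\cdot]$ as $\mathbb{E}_{\bar{\mathbb{P}}}[\langle\bold p_T,\cdot\rangle]$ using $\bar{\mathbb{P}}_i=\bar{\mathbb{P}}(\cdot\,|\,\bold p_T=e_i)$ and $p_i=\bar{\mathbb{P}}[\bold p_T=e_i]$, and then replaces $\bold p_T$ by $\bold p_\sigma$, $\bold p_{\tau^*}$ via optional sampling, which is precisely your conditioning step. After this identity, however, the paper's proof stops: it invokes ``(6.8) follows then with (6.7) by standard results.'' Your Step 2 and ``Main obstacle'' section are an honest unpacking of what those ``standard results'' would have to be, namely a verification that $\tau^*$ attains the essinf in (6.7), which as you say amounts to a supermartingale property of $Y_s=V(s,X^{t,x}_s,\bold p_s)$ stopped at $\tau^*$ under $\bar{\mathbb{P}}$, closed by the boundary identities at $\sigma$, $\tau^*$ and $T$.

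You are also right that this is the delicate point, and your diagnosis is sharper than the paper's remark: $V$ is only a viscosity solution, $\bold p$ is merely c\`adl\`ag (so the reflected-BSDE results the paper itself says do not apply), and moreover the convexity of $V$ in $p$ makes $s\mapsto V(s,X_s,\bold p_s)$ a \emph{sub}martingale in the $\bold p$-direction by Jensen, so the supermartingale property can only hold because the optimal $\bar{\mathbb{P}}$ steers $\bold p$ along flat directions of $V(\cdot,\cdot,p)$. This is exactly the mechanism behind the ``$\epsilon$-optimal martingale measure'' picture of Sections 6--8, and your regularization-plus-It\^o plan (sup-convolution in $(t,x)$, convexity-preserving smoothing in $p$, jump terms controlled by convexity, comparison to pass to the limit) is the natural route. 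Note, though, that neither you nor the paper carries this step out; the paper's proof is essentially your Step 1 followed by a citation of folklore, while you have at least made the content of that citation explicit and flagged where the genuine work lies.
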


\begin{proof}
By definition of $\bar{\mathbb{P}}_i$ we have
\begin{eqnarray*}
&&\sum_{i=1}^I p_i\mathbb{E}_{\bar{ \mathbb{P}}_i} \bigg[f_i(\sigma,X^{t,x}_\sigma)1_{\sigma<\tau^*,\sigma<T}+h_i(\tau^*,X^{t,x}_{\tau^*})1_{ \tau^*\leq\sigma,\tau^*<T}+ g_i(X^{t,x}_T) 1_{\tau^*=\sigma=T}\bigg]\\
&&\ \ \ {=}\sum_{i=1}^I \bar {\mathbb{P}}[\bold {p}_T=e_i] \mathbb{E}_{\bar{ \mathbb{P}}} \bigg[f_i(\sigma,X^{t,x}_\sigma)1_{\sigma<\tau^*,\sigma<T}\\
&&			\ \ \ \ \ \ \ \ \ \ \ \ \ \ \ \ \ +h_i(\tau^*,X^{t,x}_{\tau^*})1_{ \tau^*\leq\sigma,\tau^*<T}+ g_i(X^{t,x}_T) 1_{\tau^*=\sigma=T}| \bold{p}_T=e_i \bigg]\\
&&\ \ \ {=}\sum_{i=1}^I  \mathbb{E}_{\bar{ \mathbb{P}}} \bigg[ 1_{\{\bold {p}_T=e_i\}} \bigg(f_i(\sigma,X^{t,x}_\sigma)1_{\sigma<\tau^*,\sigma<T}\\
&&			\ \ \ \ \ \ \ \ \ \ \ \ \ \ \ \ \ +h_i(\tau^*,X^{t,x}_{\tau^*})1_{ \tau^*\leq\sigma,\tau^*<T}+ g_i(X^{t,x}_T) 1_{\tau^*=\sigma=T}\bigg) \bigg]\\
&&\ \ \ {=} \mathbb{E}_{\bar{ \mathbb{P}}} \bigg[ \langle \bold p_T,f(\sigma,X^{t,x}_\sigma)\rangle 1_{\sigma<\tau^*,\sigma<T}\\
&&			\ \ \ \ \ \ \ \ \ \ \ \ \ \ \ \ \ +\langle \bold p_T,h(\tau^*,X^{t,x}_{\tau^*})\rangle 1_{ \tau^*\leq\sigma,\tau^*<T}+ \langle \bold p_T, g(X^{t,x}_T)\rangle 1_{\tau^*=\sigma=T}\bigg],
\end{eqnarray*}
while, since $\bold p$ is a martingale, we have by conditioning
\begin{eqnarray*}
&&  \mathbb{E}_{\bar{ \mathbb{P}}} \bigg[ \langle \bold p_T,f(\sigma,X^{t,x}_\sigma)\rangle 1_{\sigma<\tau^*,\sigma<T}\\
&&			\ \ \ \ \ \ \ \ \ \ \ \ \ \ \ \ \ +\langle  \bold p_T,h(\tau^*,X^{t,x}_{\tau^*})\rangle 1_{ \tau^*\leq\sigma,\tau^*<T}+\langle \bold p_T, g(X^{t,x}_T)\rangle 1_{\tau^*=\sigma=T} \bigg]\\
&&\ \ \ =\mathbb{E}_{\bar{ \mathbb{P}}} \bigg[ \langle \bold p_\sigma,f(\sigma,X^{t,x}_\sigma)\rangle 1_{\sigma<\tau^*,\sigma<T}\\
&&\ \ \ \ \ \ \ \ \ \ \ \ \ \ \ \ \ +\langle \bold p_{\tau^*},h(\tau^*,X^{t,x}_{\tau^*})\rangle 1_{ \tau^*\leq\sigma,\tau^*<T}+\langle \bold p_T, g(X^{t,x}_T)\rangle 1_{\tau^*=\sigma=T}\bigg].
\end{eqnarray*}
(6.8) follows then with  (6.7) by standard results.
\end{proof}

\subsection{The functions $W^+,W^-$ and $\epsilon$-optimal martingale measures}

We conclude this section with some important technical remarks. Note that by its very definition $W^+(t,x,p)$ and $W^-(t,x,p)$ are merely ${\mathcal{F}_{t-}}$ measurable random fields. However we can show that they are deterministic and hence a good candidate to represent the deterministic value function $V(t,x,p)$. The proof is mainly based on the methods in \cite{BuLi} using perturbation of $\mathcal{C}([0,T];\mathbb{R}^d)$ with certain elements of the Cameron-Martin space. We already adapted these arguments to the framework of games with incomplete information in \cite{CG}. The proof is very similar here and thus omitted.

\begin{prop}
For any $t\in[0,T]$, $x\in\mathbb{R}^d$, $p\in\Delta(I)$ we have that
\begin{eqnarray*}
W^+(t,x,p)&=&\mathbb{E}_{\mathbb{Q}}[W^+(t,x,p)]\ \ \ \ \ \ \ \ {\mathbb{Q}}\textnormal{-a.s.}\\
W^-(t,x,p)&=&\mathbb{E}_{\mathbb{Q}}[W^-(t,x,p)]\ \ \ \ \ \ \ \ {\mathbb{Q}}\textnormal{-a.s.}
\end{eqnarray*}
Hence identifying $W^+$, $W^-$ respectively with its deterministic version we can consider $W^+:[0,T]\times\mathbb{R}^d\times\Delta(I)\rightarrow\mathbb{R}$ and $W^-:[0,T)\times\mathbb{R}^d\times\Delta(I)\rightarrow\mathbb{R}$ as deterministic functions.
\end{prop}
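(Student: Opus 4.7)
The plan follows the Cameron--Martin shift argument of \cite{BuLi}, adapted to games with incomplete information in \cite{CG}. The starting point is that, by Definition 6.1, every $\mathbb{P}\in\mathcal{P}(t,p)$ agrees with $\mathbb{Q}=\delta(p)\otimes\mathbb{P}_0$ on $\mathcal{F}_{t-}$, so $W^+(t,x,p)$ and $W^-(t,x,p)$ are $\mathcal{F}_{t-}$-measurable functionals of the pre-$t$ Brownian path $(B_s)_{s\in[0,t)}$. To conclude that they are deterministic it suffices, by the standard fact that any Wiener functional invariant under all Cameron--Martin translations is $\mathbb{P}_0$-a.s.\ constant (ergodicity of the shift group, or equivalently a Malliavin-derivative argument), to establish invariance of $W^\pm$ under pre-$t$ Cameron--Martin shifts of $\omega_B$.

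The invariance is obtained by an explicit bijection at the level of the data. For $h$ in the Cameron--Martin space $\mathbb{H}_t$ of $[0,t]$-supported elements, define the shift $\tau_h(\omega_{\bold p},\omega_B)=(\omega_{\bold p},\omega_B+h)$ on $\Omega$. Given $\mathbb{P}\in\mathcal{P}(t,p)$ and $\sigma,\tau\in\bar{\mathcal{T}}(t)$, set $\sigma^h:=\sigma\circ\tau_{-h}$, $\tau^h:=\tau\circ\tau_{-h}$, and define $\mathbb{P}^h$ via a Girsanov density along $h$ so that $B$ remains a $\mathbb{P}^h$-Brownian motion after the shift. Since $\tau_h$ acts trivially on $\omega_{\bold p}$, the dynamics of $\bold p$, its martingale property, and its independence from $B$ stipulated in Definition 6.1 (i) are preserved; hence $\mathbb{P}^h\in\mathcal{P}(t,p)$, and the maps $\mathbb{P}\mapsto\mathbb{P}^h$, $\sigma\mapsto\sigma^h$, $\tau\mapsto\tau^h$ are bijections of the respective sets. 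As $X^{t,x}_s$ for $s\geq t$ depends only on the post-$t$ increments of $B$, which are unaffected by $\tau_h$, a direct change of variables yields
\[J(t,x,\tau^h,\sigma^h,\mathbb{P}^h)_{t-}(\omega_B+h)=J(t,x,\tau,\sigma,\mathbb{P})_{t-}(\omega_B)\quad\mathbb{Q}\text{-a.s.}\]
Passing to the essinf/esssup over the bijectively transformed sets in the definitions (6.3)--(6.4) gives $W^\pm(t,x,p)(\omega_B+h)=W^\pm(t,x,p)(\omega_B)$ $\mathbb{Q}$-a.s.

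The main technical obstacle is twofold. First, in constructing $\mathbb{P}^h$ one must reconcile the Girsanov density on the $\omega_B$-coordinate with the martingale constraint on $\bold p$; the saving grace is the product structure of $\Omega$ together with the independence of $\bold p_T$ from $B$, which ensures that the density factors through the $\omega_B$-coordinate alone and does not perturb the law of $\bold p$. Second, the invariance identity above a priori holds $\mathbb{Q}$-a.s.\ with an $h$-dependent null set, whereas the determinism lemma requires a uniform version valid for all $h\in\mathbb{H}_t$ outside a single null set; this is handled, exactly as in \cite{CG}, by verifying the identity on a countable dense family of $h$'s and then using the Lipschitz/H\"older regularity of the payoff in $\omega_B$ to extend by approximation. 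Once both points are settled, applying the zero-one property of Cameron--Martin-invariant Wiener functionals yields the claimed identities $W^\pm(t,x,p)=\mathbb{E}_\mathbb{Q}[W^\pm(t,x,p)]$ $\mathbb{Q}$-a.s.
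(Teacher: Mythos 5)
Your proposal reconstructs, in the right order and with the right key ingredients, exactly the Cameron--Martin shift argument that the paper defers to \cite{BuLi} and \cite{CG}: reduce to showing that the $\mathcal{F}_{t-}$-measurable random variables $W^\pm(t,x,p)$ are invariant under shifts $\omega_B\mapsto\omega_B+h$ with $\dot h$ supported on $[0,t]$, establish this by bijectively transforming the data $(\tau,\sigma,\mathbb{P})\mapsto(\tau^h,\sigma^h,\mathbb{P}^h)$, handle the pointwise-in-$h$ null sets via a countable dense family and regularity, and invoke the Buckdahn--Li zero--one law for Cameron--Martin-invariant functionals. One minor imprecision worth flagging: $\mathbb{P}^h$ is not defined ``via a Girsanov density along $h$ so that $B$ remains a $\mathbb{P}^h$-Brownian motion'' in the literal sense (a Girsanov tilt alone turns $B$ into a Brownian motion \emph{with drift}); the correct construction composes a Girsanov tilt on the $\omega_B$-coordinate with a push-forward under $\tau_{-h}$, and one then checks, as you correctly anticipate using the $\mathcal H_t$-measurability of the density together with the independence of $\bold p_T$ from $B$, that the martingale property of $\bold p$ and the constraint $\bold p_T\in\{e_i\}$ are preserved so that $\mathbb{P}^h\in\mathcal{P}(t,p)$.
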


In the following section we establish some regularity results and a dynamic programming principle. To this end we work with $\epsilon$-optimal measures. Note that since we are taking the essential infimum over a family of random variables, existence of an $\epsilon$-optimal $\mathbb{P}^\epsilon\in \mathcal{P}(t,p)$ is as in \cite{CG} not standard. Therefore we provide a technical lemma, the proof of which can be provided along the lines of \cite{BuLi}, \cite{CG} respectively.

\begin{lem}
For any $(t,x,p)\in[0,T]\times\mathbb{R}^d\times\Delta(I)$ there is an $\epsilon$-optimal $\mathbb{P}^\epsilon\in \mathcal{P}(t,p)$ in the sense that $\mathbb{Q}\textnormal{-a.s.}
$
\begin{equation*}
\begin{array}{rcl}
W^-(t,x,p)+\epsilon&\geq&  \textnormal{esssup}_{\sigma \in \bar{\mathcal{T}}(t)} \textnormal{essinf}_{\tau\in \bar{\mathcal{T}}(t)}J(t,x,\tau,\sigma,\mathbb{P}^\epsilon)_{t-}.
 \end{array}
 \end{equation*}
 Furthermore for any $(t,x,p)\in[0,T]\times\mathbb{R}^d\times\Delta(I)$ there is an $\epsilon$-optimal $\mathbb{P}^\epsilon\in \mathcal{P}(t,p)$ in the sense that $\mathbb{Q}\textnormal{-a.s.}
$
\begin{equation*}
\begin{array}{rcl}
W^+(t,x,p)+\epsilon&\geq&   \textnormal{essinf}_{\tau\in \bar{\mathcal{T}}(t)} \textnormal{esssup}_{\sigma \in \bar{\mathcal{T}}(t)}J(t,x,\tau,\sigma,\mathbb{P}^\epsilon)_{t-}.
 \end{array}
 \end{equation*}
\end{lem}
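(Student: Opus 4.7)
The plan is to establish the statement via a classical downward-directedness and pasting argument applied to the $\mathcal{F}_{t-}$-measurable families
\[F^-(\mathbb{P}) := \textnormal{esssup}_{\sigma \in \bar{\mathcal{T}}(t)} \textnormal{essinf}_{\tau\in \bar{\mathcal{T}}(t)}J(t,x,\tau,\sigma,\mathbb{P})_{t-},\qquad \mathbb{P} \in \mathcal{P}(t,p),\]
and its $F^+$-analogue with $\textnormal{esssup}$ and $\textnormal{essinf}$ interchanged. The argument would proceed in two stages: first I show that $\mathcal{P}(t,p)$ is closed under pasting at time $t$ along $\mathcal{F}_{t-}$-measurable sets in such a way that $\{F^\pm(\mathbb{P})\}$ is downward directed in the $\mathbb{Q}$-a.s.\ order; then, combining this with Proposition 6.6 (which asserts that $W^\pm$ is deterministic), a countable pasting produces a single $\mathbb{P}^\epsilon \in \mathcal{P}(t,p)$ with $F^\pm(\mathbb{P}^\epsilon) \leq W^\pm(t,x,p)+\epsilon$ $\mathbb{Q}$-a.s.

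For the pasting step, given $\mathbb{P}^1, \mathbb{P}^2 \in \mathcal{P}(t,p)$ and any set $A \in \mathcal{F}_{t-}$ (typically $A = \{F^-(\mathbb{P}^1)\leq F^-(\mathbb{P}^2)\}$), I would use that every $\mathbb{P}\in\mathcal{P}(t,p)$ restricts to the common measure $\mathbb{Q}$ on $\mathcal{F}_{t-}$, so regular conditional probabilities given $\mathcal{F}_{t-}$ are well-defined on the Polish space $\Omega$, and define
\[\mathbb{P}^3(C) := \int_{\Omega} \Bigl[\mathbb{P}^1(C\mid \mathcal{F}_{t-})(\omega)\,1_A(\omega) + \mathbb{P}^2(C\mid \mathcal{F}_{t-})(\omega)\,1_{A^c}(\omega)\Bigr]\,d\mathbb{Q}(\omega).\]
Membership $\mathbb{P}^3 \in \mathcal{P}(t,p)$ would then be checked by splitting along $A$ and $A^c$: on each half the $\mathbb{P}^3$-conditional law given $\mathcal{F}_{t-}$ coincides with that of $\mathbb{P}^1$ or $\mathbb{P}^2$, so the martingale property of $\bold p$ (including the matching $\bold p_{t-}=p$ at the initial jump), the Brownian character of $B$ on $[t,T]$, and the independence $\bold p_T \perp B$ are all inherited, while the pre-$t$ behaviour is common to all three measures and causes no difficulty.

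The main obstacle is the resulting pointwise comparison $F^-(\mathbb{P}^3) \leq F^-(\mathbb{P}^1)\wedge F^-(\mathbb{P}^2)$. Admissible stopping times in $\bar{\mathcal{T}}(t)$ are adapted to $(\mathcal{F}_s)_{s\in[t,T]}$, which contains the pre-$t$ path of $B$, so one cannot simply paste $\epsilon$-optimal stopping times pointwise along $A$ and $A^c$. I would handle this via the measurable-selection and aggregation technique developed in \cite{BuLi} and already adapted to the incomplete-information framework in \cite{CG}: starting from $\epsilon$-optimal stopping times for $F^-(\mathbb{P}^1)$ and $F^-(\mathbb{P}^2)$, aggregate them into a single admissible stopping time whose $\mathbb{P}^3$-conditional law coincides with the source on $A$ and the other source on $A^c$, and then take essential sup and inf inside each half. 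The argument for $F^+$ is symmetric, exchanging the roles of the stopping variables. Once directedness is in place, enumerating a minimising sequence $(\mathbb{P}^n)$ with $F^\pm(\mathbb{P}^n) \downarrow W^\pm(t,x,p)$ $\mathbb{Q}$-a.s.\ and performing a further countable pasting along the $\mathcal{F}_{t-}$-measurable sets $A_n := \{F^\pm(\mathbb{P}^n) \leq W^\pm + \epsilon\}$, which cover $\Omega$ up to $\mathbb{Q}$-nullsets precisely because $W^\pm$ is a constant, yields the sought $\mathbb{P}^\epsilon$. No extra regularity of the c\`adl\`ag dynamic $\bold p$ is needed here because the pasting occurs at the deterministic time $t$ along an $\mathcal{F}_{t-}$-measurable set.
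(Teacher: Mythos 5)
The paper does not spell out a proof of this lemma; it simply states that it ``can be provided along the lines of \cite{BuLi}, \cite{CG}'', which is precisely the directedness-plus-pasting argument you describe. Your outline is therefore the expected one: use that all $\mathbb{P}\in\mathcal{P}(t,p)$ coincide with a common $\mathbb{Q}$ on $\mathcal{F}_{t-}$, paste regular conditional kernels along an $\mathcal{F}_{t-}$-measurable event $A$ to stay inside $\mathcal{P}(t,p)$, deduce downward directedness of the $\mathcal{F}_{t-}$-measurable family, and then exploit Proposition~6.6 (that $W^\pm$ is deterministic) together with a countable pasting to manufacture a single $\epsilon$-optimal $\mathbb{P}^\epsilon$. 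One small correction: the obstacle you flag concerning stopping times is not a real one. If $A\in\mathcal{F}_{t-}\subset\mathcal{F}_t$ and $\tau^1,\tau^2\in\bar{\mathcal{T}}(t)$ take values in $[t,T]$, then $\tau:=\tau^1 1_A+\tau^2 1_{A^c}$ satisfies $\{\tau\le s\}=(\{\tau^1\le s\}\cap A)\cup(\{\tau^2\le s\}\cap A^c)\in\mathcal{F}_s$ for every $s\ge t$, hence $\tau\in\bar{\mathcal{T}}(t)$. Consequently, for any fixed $\sigma$ one has the a.s.\ identity
\[
\textnormal{essinf}_{\tau\in\bar{\mathcal{T}}(t)} J(t,x,\tau,\sigma,\mathbb{P}^3)_{t-}
=1_A\,\textnormal{essinf}_{\tau} J(t,x,\tau,\sigma,\mathbb{P}^1)_{t-}+1_{A^c}\,\textnormal{essinf}_{\tau} J(t,x,\tau,\sigma,\mathbb{P}^2)_{t-},
\]
and likewise after the outer esssup. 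So the pointwise comparison $F^-(\mathbb{P}^3)\le F^-(\mathbb{P}^1)\wedge F^-(\mathbb{P}^2)$ follows by elementary pasting of admissible stopping times, without needing the heavier measurable-selection machinery you gesture at; that machinery in \cite{BuLi}, \cite{CG} is used for the aggregation and for Proposition~6.6 itself rather than for this step. With that adjustment the argument is sound and completes the proof of the lemma as claimed.
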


For technical reasons we furthermore introduce the set $\mathcal{P}^{f}(t,p)$ as the set of all measures $\mathbb{P} \in  \mathcal{P}(t,p)$, such that there exists a finite set $S\subset\Delta (I)$ with $\bold p_s\in S$ $\mathbb{P}$-a.s. for all $s\in[t,T]$.

\begin{rem}  
Note that for any $(t,x,p)\in[0,T]\times\mathbb{R}^d\times\Delta(I)$ $\epsilon>0$ we can choose an $\epsilon$-optimal $\mathbb{P}^\epsilon$ in the smaller class $\mathcal{P}^f(t,p)$.
The idea of the proof is as follows: first choose $\frac{\epsilon}{2}$-optimal measure $\mathbb{P}^\epsilon\in\mathcal{P}(t,p)$ for $W^-(t,x,p)$. Since $\bold p$ progressively measurable we can approximate it  by an elementary processes $\bar{\bold p}^\epsilon$, such that one has
\begin{equation*}
				|\textnormal{esssup}_{\sigma \in \bar{\mathcal{T}}(t)}  \textnormal{essinf}_{\tau\in \bar{\mathcal{T}}(t)}J(t,x,\tau,\sigma,\mathbb{P}^\epsilon)_{t-}- \textnormal{esssup}_{\sigma \in \bar{\mathcal{T}}(t)} \textnormal{essinf}_{\tau\in \bar{\mathcal{T}}(t)}J(t,x,\tau,\sigma,\bar{\mathbb{P}}^\epsilon)_{t-}|\leq\frac{\epsilon}{2},
\end{equation*}
where $\bar{\mathbb{P}}^\epsilon$ distribution of $(B,\bar{\bold p}^\epsilon)$. The same argument works for $W^+$.
\end{rem}

\section{Dynamic programming for $W^+,W^-$}

\subsection{Regularity properties}

\begin{prop}
For all $(t,x)\in[0,T]\times\mathbb{R}^d$ $W^+(t,x,p)$ and $W^-(t,x,p)$ are convex in $p$.
\end{prop}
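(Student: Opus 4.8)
The plan is to mimic the convexity argument used for $V^+$ and $V^-$ in Propositions 4.2 and 4.3, but now carrying the additional dynamic $\bold p$ along. The key new ingredient compared to the complete-information Dynkin game is that a convex combination of two martingale measures $\mathbb{P}^1\in\mathcal{P}(t,p^1)$ and $\mathbb{P}^2\in\mathcal{P}(t,p^2)$ must be turned into a single measure in $\mathcal{P}(t,p)$, where $p=\lambda p^1+(1-\lambda)p^2$. Fix $(t,x)\in[0,T]\times\mathbb{R}^d$, let $p^1,p^2\in\Delta(I)$, $\lambda\in[0,1]$ and set $p=\lambda p^1+(1-\lambda)p^2$. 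For $\epsilon>0$ choose, using Lemma 6.9, measures $\mathbb{P}^1\in\mathcal{P}(t,p^1)$ and $\mathbb{P}^2\in\mathcal{P}(t,p^2)$ that are $\epsilon$-optimal for $W^+(t,x,p^1)$ and $W^+(t,x,p^2)$ respectively.

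The main step is the construction of a glued measure $\hat{\mathbb{P}}\in\mathcal{P}(t,p)$. On the enlarged space $\Omega$ one introduces an auxiliary Bernoulli random variable $\theta$ with $\mathbb{P}[\theta=1]=\lambda$, $\mathbb{P}[\theta=2]=1-\lambda$, independent of everything else, and defines $\hat{\mathbb{P}}$ so that, conditionally on $\{\theta=k\}$, the pair $(B,\bold p)$ has law $\mathbb{P}^k$ shifted appropriately: before time $t$, $\bold p$ is constant equal to $p$ (which is consistent because all measures in $\mathcal{P}(t,\cdot)$ agree with the deterministic constant path up to $t^-$ and the relevant conditioning only affects the path on $[t,T]$), and at time $t$ the process $\bold p$ jumps to $p^1$ on $\{\theta=1\}$ and to $p^2$ on $\{\theta=2\}$, thereafter evolving as under $\mathbb{P}^k$. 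One must check the three defining properties of Definition 6.1: (ii) $B$ is a Brownian motion under $\hat{\mathbb{P}}$ — immediate since it is so under each $\mathbb{P}^k$ and $\theta$ is independent of $B$; in (i) $\bold p_T\in\{e_i\}$ a.s. and is independent of $B$ — inherited from $\mathbb{P}^1,\mathbb{P}^2$ together with independence of $\theta$; and the martingale property with $\bold p_t=p$: here one computes $\mathbb{E}_{\hat{\mathbb{P}}}[\bold p_T\mid\mathcal{F}_{t-}]=\lambda\,\mathbb{E}_{\mathbb{P}^1}[\bold p_T]+(1-\lambda)\mathbb{E}_{\mathbb{P}^2}[\bold p_T]=\lambda p^1+(1-\lambda)p^2=p$, and more generally that $\bold p$ remains a martingale under $\hat{\mathbb{P}}$ by conditioning on $\theta$ and using that $\bold p$ is a martingale under each $\mathbb{P}^k$ (the single jump at $t$ is precisely what makes the martingale property hold since $\mathbb{E}_{\hat{\mathbb{P}}}[\bold p_t\mid\mathcal{F}_{t-}]=p$). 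This measure-gluing, and in particular verifying the martingale property through the jump at $t$, is the main obstacle; it is the analogue of the splitting construction in \cite{CaRa2}, Proposition 2.1, adapted to the continuous-time martingale-measure setting.

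Once $\hat{\mathbb{P}}\in\mathcal{P}(t,p)$ is in hand, the estimate is routine. For any $\tau,\sigma\in\bar{\mathcal{T}}(t)$ one decomposes $J(t,x,\tau,\sigma,\hat{\mathbb{P}})_{t-}$ by conditioning on $\theta$; since the payoff integrand involves $\bold p$ evaluated at stopping times $\geq t$ and under $\hat{\mathbb{P}}(\cdot\mid\theta=k)$ the law of $(B,\bold p)$ on $[t,T]$ coincides with that under $\mathbb{P}^k$ (started from $p^k$), one gets
\[
J(t,x,\tau,\sigma,\hat{\mathbb{P}})_{t-}=\lambda\,J(t,x,\tau,\sigma,\mathbb{P}^1)_{t-}+(1-\lambda)\,J(t,x,\tau,\sigma,\mathbb{P}^2)_{t-}.
\]
For the upper value, one would have the informed player (the minimizer, who also selects $\mathbb{P}$) use $\hat{\mathbb{P}}$, and for each fixed $\tau$ the uninformed player's supremum over $\sigma$ of a sum is bounded by the sum of the suprema, so $\textnormal{essinf}_\tau\textnormal{esssup}_\sigma J(t,x,\tau,\sigma,\hat{\mathbb{P}})_{t-}\leq\lambda(W^+(t,x,p^1)+\epsilon)+(1-\lambda)(W^+(t,x,p^2)+\epsilon)$ after taking $\tau$ to be a near-optimal choice built from the near-optimal $\tau^k$'s for $\mathbb{P}^k$ (glued along $\{\theta=k\}$, which is $\mathcal{F}_t$-measurable hence admissible), whence $W^+(t,x,p)\leq\lambda W^+(t,x,p^1)+(1-\lambda)W^+(t,x,p^2)+2\epsilon$. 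Letting $\epsilon\downarrow0$ gives convexity of $W^+$. For $W^-$ the argument is the same with the roles of $\textnormal{esssup}_\sigma$ and $\textnormal{essinf}_\tau$ exchanged: one uses that the infimum over $\mathbb{P}$ and over $\tau$ of a convex combination, bounded below by the max–min structure, yields $W^-(t,x,p)\leq\lambda W^-(t,x,p^1)+(1-\lambda)W^-(t,x,p^2)+2\epsilon$, and again sends $\epsilon\to0$.
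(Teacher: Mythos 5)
Your proof is correct and takes essentially the same route as the paper: both glue the two $\epsilon$-optimal measures $\mathbb{P}^1\in\mathcal{P}(t,p^1)$ and $\mathbb{P}^2\in\mathcal{P}(t,p^2)$ into a single measure in $\mathcal{P}(t,p)$ and then decompose the payoff along the mixture label --- the paper writes this glued measure as the law-mixture $\lambda\mathbb{P}^1+(1-\lambda)\mathbb{P}^2$ together with the informal remark about ``identifying $\Omega$ with $\Omega\times\{1,2\}$,'' and your construction (keeping $\bold p$ constant $=p$ on $[0,t)$ and encoding the mixture index via a jump of $\bold p$ at $t$) is the precise rendering of that identification, which also explains why the component $\{\theta=k\}$ is observable and hence why the near-optimal $\tau^k$ can be glued into one admissible stopping time. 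The only caveat, shared with the paper, is that under $\mathbb{P}^k$ Definition 6.1 only forces $\bold p_{t-}=p^k$ and not $\bold p_t=p^k$ a.s., so to make $\{\theta=k\}$ genuinely $\mathcal{F}_t$-measurable one should first adjust $\mathbb{P}^k$ (e.g.\ by a brief time-shift of the $\bold p$-dynamics, which keeps $\mathbb{P}^k\in\mathcal{P}(t,p^k)$ and preserves $\epsilon$-optimality, or by passing to $\mathcal{P}^f$ as in Remark 6.8) so that $\bold p$ actually sits at $p^k$ in a right neighbourhood of $t$.
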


\begin{proof}
Let $(t,x)\in[0,T]\times\mathbb{R}^d$ and $p_1,p_2\in\Delta(I)$. Let $\mathbb{P}^{1}\in\mathcal{P}(t,p_1)$, $\mathbb{P}^{2}\in\mathcal{P}(t,p_2)$ be $\epsilon$-optimal for $W^+(t,x,p_1)$, $W^+(t,x,p_2)$ respectively. For $\lambda\in[0,1]$ define a martingale measure $\mathbb{P}^\lambda\in\mathcal{P}(t,p_\lambda)$, such that for all measurable $\phi:\mathcal{D}([0,T];\Delta(I))\times\mathcal{C}([0,T];\mathbb{R}^d)\rightarrow\mathbb{R}_+$
\begin{eqnarray*}
\mathbb{E}_{\mathbb{P}^\lambda}[\phi(\bold p,B)]=\lambda \mathbb{E}_{\mathbb{P}^1}[\phi(\bold p,B)]+(1-\lambda)\mathbb{E}_{\mathbb{P}^2}[\phi(\bold p,B)].
\end{eqnarray*}
Observe that this can be understood as identifying $\Omega$ with $\Omega\times\{1,2\}$ with weights $\lambda$ and $(1-\lambda)$ for $\Omega\times\{1\}$ and  $\Omega\times\{2\}$, respectively. So
\begin{eqnarray*}
W^+(t,x,p_\lambda)&\leq&\textnormal{essinf}_{\tau\in \bar{\mathcal{T}}(t)}\textnormal{esssup}_{\sigma \in \bar{\mathcal{T}}(t)}  J(t,x,\tau,\sigma,\mathbb{P}^\lambda)_{t-}\\
&=& 1_{\Omega\times\{1\}} \textnormal{essinf}_{\tau\in \bar{\mathcal{T}}(t)}\textnormal{esssup}_{\sigma \in \bar{\mathcal{T}}(t)} J(t,x,\tau,\sigma,\mathbb{P}^1)_{t-}\\
&&\ \ \ \ \ \ \ \ \ \ \ \ \ +1_{\Omega\times\{2\}} \textnormal{essinf}_{\tau\in \bar{\mathcal{T}}(t)}\textnormal{esssup}_{\sigma \in \bar{\mathcal{T}}(t)} J(t,x,\tau,\sigma,\mathbb{P}^2)_{t-}\\
&\leq&  1_{\Omega\times\{1\}} W^+(t,x,p_1)+1_{\Omega\times\{2\}} W^+(t,x,p_2)+2\epsilon
\end{eqnarray*}
and the convexity follows by taking expectation, since $\epsilon$ can be chosen arbitrarily small. The proof for $W^-$ follows by similar arguments.
\end{proof}

\begin{prop}
$W^+(t,x,p)$ and $W^-(t,x,p)$ are uniformly Lipschitz continuous in $x$ and $p$ and H\"older continuous in $t$.
\end{prop}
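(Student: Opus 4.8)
The plan is to establish the three regularity statements separately, in increasing order of difficulty, mimicking the structure of the proof of Proposition 4.1 but now carrying the extra dynamic $\bold p$ and the essential infimum over $\mathbb{P}\in\mathcal{P}(t,p)$ through the estimates. For the Lipschitz continuity in $x$, fix $t$, $p$, and $x,x'\in\mathbb{R}^d$. For any $\mathbb{P}\in\mathcal{P}(t,p)$ and any pair of stopping times $\tau,\sigma\in\bar{\mathcal{T}}(t)$, I would compare $J(t,x,\tau,\sigma,\mathbb{P})_{t-}$ with $J(t,x',\tau,\sigma,\mathbb{P})_{t-}$; since the stopping times and $\bold p$ do not depend on the starting point of $X$, and since $f,h,g$ are bounded Lipschitz by (A)(ii) while $\bold p_s\in\Delta(I)$ is bounded, one gets $|J(t,x,\tau,\sigma,\mathbb{P})_{t-}-J(t,x',\tau,\sigma,\mathbb{P})_{t-}|\le C\,\mathbb{E}_{\mathbb{P}}[\sup_{s}|X^{t,x}_s-X^{t,x'}_s|\mid\mathcal{F}_{t-}]\le C|x-x'|$ by the standard SDE flow estimate under (A)(i), with $C$ independent of $\tau,\sigma,\mathbb{P}$. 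Taking $\textnormal{essinf}_\tau$, $\textnormal{esssup}_\sigma$ (resp.\ $\textnormal{esssup}_\sigma$, $\textnormal{essinf}_\tau$) and then $\textnormal{essinf}_{\mathbb{P}}$ preserves this uniform bound, giving the Lipschitz constant for both $W^+$ and $W^-$ in $x$.

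For Lipschitz continuity in $p$, the bound on $\mathcal{P}(t,p)$ versus $\mathcal{P}(t,p')$ is where one must be slightly more careful, but the convexity already proven in Proposition 7.1, together with the two-sided bound $\langle f(t,x),p\rangle\le W^\pm(t,x,p)\le\langle h(t,x),p\rangle$ (which follows for $W^\pm$ exactly as Proposition 4.3 does for $V^\pm$, using that $\bold p$ is a martingale with $\bold p_t=p$), yields Lipschitz continuity in $p$ by the classical argument: a bounded convex function on the simplex whose graph is squeezed between two affine functions agreeing up to a bounded gap is automatically Lipschitz, with constant controlled by $\|f\|_\infty+\|h\|_\infty$ and the geometry of $\Delta(I)$. (Alternatively one can couple $\mathbb{P}\in\mathcal{P}(t,p)$ to a measure in $\mathcal{P}(t,p')$ by shifting $\bold p$ by the constant $p'-p$ when $p'$ is close enough to $p$ to stay in $\Delta(I)$; I would use whichever is cleaner.)

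For the H\"older continuity in $t$, I would adapt the argument of Proposition 4.1 to the enlarged space. Take $t\le t'$. Given an $\epsilon$-optimal $\mathbb{P}\in\mathcal{P}(t',x,p)$ for $W^+(t',x,p)$, it extends to a measure in $\mathcal{P}(t,p)$ (keep $\bold p$ constant equal to $p$ on $[t,t']$ and let $B$ run as a Brownian motion on $[t,t']$); conversely, given $\mathbb{P}\in\mathcal{P}(t,p)$, restricting to $[t',T]$ and relabelling gives a measure in $\mathcal{P}(t',p)$. Stopping times $\tau,\sigma\in\bar{\mathcal{T}}(t)$ are pushed to $\bar{\mathcal{T}}(t')$ by the same cut-off $\hat\sigma=t'\vee\sigma$ used in (4.1), and the resulting difference of payoffs is estimated term by term: the discrepancy comes from (a) the event $\{\sigma<t'\}$ or $\{\tau<t'\}$, whose contribution is $O(\sqrt{t'-t})$ by boundedness of $f,h$ together with $\mathbb{P}[\sigma<t']$ being small when the stopping region is not yet hit --- here one uses the Lipschitz-in-$t$ bound on $f,h$ and an estimate on $\sup_{s\le t'}|X^{t,x}_s-x|$ --- and (b) the flow estimate $\mathbb{E}[\sup_s|X^{t,x}_s-X^{t',x}_s|]\le C\sqrt{t'-t}$, plus the Lipschitz-in-$x$ continuity of $W^+(t,\cdot,p)$ just established to handle the terminal term $W^+(t',X_{t'},p)$ versus $W^+(t,X_{t'},p)$ (this last piece also needs $|W^+(t',y,p)-W^+(t,y,p)|$, so one runs the estimate to get the self-referential H\"older bound and closes it as in the classical proof). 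Taking the essential suprema/infima and the essential infimum over $\mathbb{P}$ throughout, and noting $\epsilon$ and $\delta$ are arbitrary, gives $|W^+(t,x,p)-W^+(t',x,p)|\le C|t-t'|^{1/2}$, uniformly in $x,p$; the bound for $W^-$ is identical after swapping the order of the sup and inf. The main obstacle is the bookkeeping in step (a)/(b) for the H\"older bound --- making sure the cut-off of the stopping times does not introduce an $O(1)$ error and that the self-referential term in $W^+$ is absorbed correctly --- but this is the same mechanism as in Proposition 4.1, now simply carried verbatim on $\Omega$ with the extra (bounded) factor $\bold p$ present in every payoff term.
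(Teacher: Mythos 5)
Your treatment of the Lipschitz continuity in $x$ is fine and matches the paper's "straightforward" dismissal. The issue is the Lipschitz continuity in $p$, where both alternatives you offer have genuine gaps. The squeeze argument (``a bounded convex function on the simplex whose graph is squeezed between two affine functions agreeing up to a bounded gap is automatically Lipschitz, with constant controlled by $\|f\|_\infty+\|h\|_\infty$'') is simply false: on $[0,1]$ the functions $W_n(p)=\max(0,n(p-1+1/n))$ are convex, take values in $[0,1]$, lie between the constants $0$ and $1$, yet have Lipschitz constant $n$ at the right endpoint. Boundedness plus convexity alone gives local Lipschitz continuity in the interior but no control near $\partial\Delta(I)$, which is precisely where the value function can become steep. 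The coupling alternative (``shift $\bold p$ by the constant $p'-p$'') also fails, because Definition 6.1 requires $\bold p_T\in\{e_1,\dots,e_I\}$ and the shifted process $\bold p+(p'-p)$ no longer terminates in the set of vertices; there is no easy way to map $\mathcal P(t,p)$ onto $\mathcal P(t,p')$ by a translation. The paper's actual argument is different and exploits a structural fact you do not use: by convexity it is enough to check Lipschitz continuity along segments from an arbitrary $p$ to a vertex $e_i$, and at a vertex the set $\mathcal{P}(t,e_i)$ collapses to the single measure $\delta(e_i)\otimes\mathbb{P}_0$ (since a $\Delta(I)$-valued martingale started at $e_i$ is constant). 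This allows an explicit comparison of $W^+(t,x,e_i)$ and $W^+(t,x,p)$ in which the martingale property and the elementary bounds $|p-e_i|\le c(1-p_i)$ and $1-p_i\le c\sqrt I\,|p-e_i|$ close the estimate. Without identifying that $\mathcal P(t,e_i)$ is a singleton, the Lipschitz bound in $p$ does not follow.

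A secondary remark on the H\"older continuity in $t$: the general strategy (cut off stopping times at $t'$, compare payoffs under a common measure, use the SDE flow estimate) is the right one, and the paper indeed defers to Proposition 4.1 and to \cite{CG}. But the estimate on $\{\sigma<t'\}$ is not that ``$\mathbb P[\sigma<t']$ is small''; $\sigma$ is arbitrary and that probability need not be small. As in the proof of Proposition 4.1, on that event the difference is $f_i(\sigma,X^{t,x}_\sigma)-f_i(t',x)$, which is $O(\sqrt{t'-t})$ by Lipschitz continuity of $f_i$ and the moment bound on $\sup_{s\in[t,t']}|X^{t,x}_s-x|$. There is also no ``self-referential H\"older bound'': the comparison is carried out directly on the definitions of $W^+$ and $W^-$, not through the dynamic programming identity, so the term $W^+(t',X_{t'},p)$ versus $W^+(t,X_{t'},p)$ never arises.
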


\begin{proof}
The proof of Lipschitz continuity in $x$ is straightforward, while the H\"older continuity in $t$ can be shown as in Proposition 4.1. and Proposition 4.6. in \cite{CG}.\\
It remains to prove the uniform Lipschitz continuity in $p$. Since we have convexity in $p$, it is sufficient to establish the Lipschitz continuity with respect to $p$ on the extreme points $e_i$. Observe that $\mathcal{P}(t,e_i)$ consists in the single probability measure $\delta(e_i)\otimes\mathbb{P}_0$, where $\delta(e_i)$ is the measure under which $\bold p$ is constant and equal to $e_i$ and $\mathbb{P}_0$ is a Wiener measure.\\
Assume $W^+(t,x,e_i)-W^+(t,x,p)>0$. For $\epsilon>0$ let $\mathbb{P}^\epsilon\in \mathcal{P}(t,p)$ be $\epsilon$-optimal for $W^+(t,x,p)$. Then
\begin{equation}
\begin{array}{rcl}
&&W^+(t,x,e_i)-W^+(t,x,p)-3\epsilon\\
 \ \\
&&\leq   \textnormal{essinf}_{\tau\in \bar{\mathcal{T}}(t)}\textnormal{esssup}_{\sigma \in \bar{\mathcal{T}}(t)} J(t,x,\tau,\sigma,\delta(e_i)\otimes\mathbb{P}_0)_{t-}\\
\ \\
&&\ \ \ \ \ \ \ -\textnormal{essinf}_{\tau\in \bar{\mathcal{T}}(t)}\textnormal{esssup}_{\sigma \in \bar{\mathcal{T}}(t)} J(t,x,\tau,\sigma,\mathbb{P}^\epsilon)_{t-} -2\epsilon.
\end{array}
\end{equation}
Choose now $\bar{\tau}\in\bar{\mathcal{T}}(t)$ to be $\epsilon$-optimal for  $ \textnormal{essinf}_{\tau\in \bar{\mathcal{T}}(t)} \textnormal{esssup}_{\sigma \in \bar{\mathcal{T}}(t)} J(t,x,\tau,\sigma,\mathbb{P}^\epsilon)_{t-}$ and $\bar{\sigma}\in\bar{\mathcal{T}}(t)$ to be $\epsilon$-optimal for  $\textnormal{esssup}_{\sigma \in \bar{\mathcal{T}}(t)}  J(t,x,\bar \tau,\sigma,,\delta(e_i)\otimes\mathbb{P}_0))_{t-}$. Then we have with (7.1)
\begin{equation}
\begin{array}{rcl}
&& W^+(t,x,e_i)-W^+(t,x,p)-3\epsilon\\
&&\leq \mathbb{E}_{{\mathbb{P}^{\epsilon}}}\bigg[\langle e_i-\bold p_{\bar{\sigma}},f({\bar{\sigma}},X^{t,x}_{\bar{\sigma}})\rangle1_{{\bar{\sigma}}<{\bar{\tau}}\leq T}+\langle e_i-\bold p_{\bar{\tau}}, h(\bar \tau,X^{t,x}_{\bar{\tau}})\rangle1_{\bar{ \tau}\leq\bar {\sigma},\bar{\tau}<T}\\
&&\ \ \ \ \ \ \ \ \ \ \ \ \ \ \ \ \ \ \ \ \ \ \ \ \ \ \ \ \ + \langle e_i-\bold p_T,g(X^{t,x}_T)\rangle 1_{\bar \sigma=\bar \tau=T}|\mathcal{F}_{t-}\bigg].
\end{array}
\end{equation}
Since for all $p\in\Delta(I)$  $0\leq|p-e_i|\leq c(1- p_i)$ we have by the boundedness of the coefficients with (7.2) and the fact that $\bold p$ is a ${{\mathbb{P}^{\epsilon}}}$-martingale with mean $p$
\begin{equation*}
\begin{array}{rcl}
&& W^+(t,x,e_i)-W^+(t,x,p)-3\epsilon\\
\ \\
&\leq& c \left(1- \mathbb{E}_{{\mathbb{P}^{\epsilon}}}\left[(\bold p_{\bar{\sigma}})_i1_{{\bar{\sigma}}<{\bar{\tau}}\leq T}+(\bold p_{\bar{\tau}})_i1_{\bar{ \tau}\leq\bar {\sigma},\bar{\tau}<T}+(\bold p_{T})_i 1_{\bar \sigma=\bar \tau=T}|\mathcal{F}_{t-}\right]\right)\\
\ \\
&\leq&c(1- p_i).
\end{array}
\end{equation*}
Using now
\begin{equation*}
1- p_i\leq c  \sum_{j} |(p)_j-\delta_{ij}|\leq c \sqrt{I}|p-e_i|,
\end{equation*}
the claim follows since $\epsilon$ can be chosen arbirarily small. The case $W^+(t,x,p)-W^+(t,x,e_i)>0$ is immediate.\\
The Lipschitz continuity of $W^-$ in $p$ can be established by similar arguments.
\end{proof}

\subsection{Subdynamic programming for $W^+$}

\begin{thm}
Let $(\bar t,\bar x,\bar p)\in[0,T]\times \mathbb{R}^d\times\Delta(I)$. Then for all $t\in[\bar t,T]$
\begin{equation}
\begin{array}{l}
W^+ (\bar t,\bar x,\bar p)\\
\ \ \ \leq\textnormal{essinf}_{\mathbb{P}\in\mathcal{P}(\bar t,\bar p)}  \textnormal{essinf}_{\tau\in \bar{\mathcal{T}}(\bar t,t)}\textnormal{esssup}_{\sigma \in \bar{\mathcal{T}}(\bar t,t)}\mathbb{E}_{\mathbb{P}}\bigg[\langle \bold p_\sigma,f(\sigma, X^{\bar t,\bar x}_{\sigma}) 1_{\sigma<\tau,\sigma<t}\rangle\\
 \ \ \ \ \ \ \ \ \ \ \ \ \ \ \ \ \ \ \ \ \ +\langle \bold p_\tau, h(\tau,X^{\bar t,\bar x}_{\tau} )1_{\tau\leq\sigma,\tau<t}\rangle+ W^+(t,X^{\bar t,\bar x}_t,\bold p_{t-}) 1_{\tau=\sigma=t}|\mathcal{F}_{\bar t-}\bigg].
\end{array}
\end{equation}
\end{thm}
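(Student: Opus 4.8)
The plan is to mimic the proof of the subdynamic programming principle for $V^+$ (Theorem 4.7), transplanted to the enlarged space $\Omega$ with its additional dynamic $\bold p$. Fix $(\bar t,\bar x,\bar p)$ and $t\in[\bar t,T]$. The first step is to reduce to a convenient class of martingale measures: by Remark 6.11, for $\epsilon>0$ we may pick an $\epsilon$-optimal measure $\mathbb{P}^\epsilon\in\mathcal{P}^f(\bar t,\bar p)$ for $W^+(\bar t,\bar x,\bar p)$, so that $\bold p$ takes values in a \emph{finite} set $S\subset\Delta(I)$ under $\mathbb{P}^\epsilon$; this finiteness is what will allow the pasting argument below to go through without continuity hypotheses on $\bold p$. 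Since all measures in $\mathcal{P}(\bar t,\bar p)$ agree on $\mathcal{F}_{\bar t-}$ (the common measure $\mathbb{Q}$), and by Proposition 6.8 $W^+$ is deterministic, it suffices to produce, for arbitrary $\epsilon,\delta>0$, a measure $\mathbb{P}\in\mathcal{P}(\bar t,\bar p)$ and a stopping time $\bar\tau\in\bar{\mathcal{T}}(\bar t,t)$ witnessing the right-hand side up to $c\delta+c\epsilon$.

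The second step is the construction of a pasted measure. Choose a partition $(A^j)$ of $\mathbb{R}^d$ with $\mathrm{diam}(A^j)\le\delta$, points $y^j\in A^j$; additionally, since $S$ is finite, index by pairs $(j,q)$ with $q\in S$ and pick $\mathbb{P}^{j,q}\in\mathcal{P}^f(t,q)$ that is $\epsilon$-optimal for $W^+(t,y^j,q)$. Using the identification $\Omega_{\bar t}=\Omega_{\bar t,t}\times\Omega_t$ via the map $\pi$ from Section 6.1, define $\mathbb{P}$ to coincide with $\mathbb{P}^\epsilon$ on $\Omega_{\bar t,t}$ (i.e.\ on $\mathcal{F}_{\bar t,t}$) and, conditionally on $\{X^{\bar t,\bar x}_t\in A^j,\ \bold p_{t-}=q\}$, to evolve on $\Omega_t$ according to $\mathbb{P}^{j,q}$; one checks $\mathbb{P}\in\mathcal{P}(\bar t,\bar p)$ because the martingale property of $\bold p$ across time $t$ is preserved (each $\mathbb{P}^{j,q}$ starts $\bold p$ at $q=\bold p_{t-}$), $\bold p_T\in\{e_i\}$ and the independence of $\bold p_T$ from $B$ hold piecewise, and $B$ remains a Brownian motion by the strong Markov property. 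The third step is the estimate: for any $\sigma\in\bar{\mathcal{T}}(\bar t)$, split $J(\bar t,\bar x,\tau,\sigma,\mathbb{P})_{\bar t-}$ at time $t$ into the part before $t$ and the part after $t$ exactly as in (4.10); on the event $\{\tau\wedge\sigma\ge t\}$, condition on $\mathcal{F}_t$ and use the Markov property of $X^{\bar t,\bar x}$ together with the Lipschitz continuity of $f,h,g$ (Assumption (A)) and of $W^+$ in $x$ (Proposition 7.2) to replace the tail by $\mathbb{E}_{\mathbb{P}^{j,q}}[\cdots]\le W^+(t,y^j,q)+\epsilon\le W^+(t,X^{\bar t,\bar x}_t,\bold p_{t-})+c\delta+\epsilon$, again mirroring (4.11). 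Taking $\bar\tau$ to be an $\epsilon$-optimal stopping time for the inner problem over $[\bar t,t]$ under $\mathbb{P}^\epsilon$ and $\epsilon$-optimality of $\mathbb{P}^\epsilon$ for $W^+(\bar t,\bar x,\bar p)$ assembles the inequality; letting $\epsilon,\delta\to0$ finishes the proof.

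The main obstacle is the measure-pasting step: one must verify carefully that the conditional stitching of $\mathbb{P}^\epsilon$ with the family $(\mathbb{P}^{j,q})$ along the $\mathcal{F}_t$-measurable data $(X^{\bar t,\bar x}_t,\bold p_{t-})$ yields an element of $\mathcal{P}(\bar t,\bar p)$ — in particular that $\bold p$ is a \emph{global} martingale under $\mathbb{P}$, which requires $\mathbb{E}_{\mathbb{P}^{j,q}}[\bold p_T]=q=\bold p_{t-}$ on the relevant event and hence exactly the reason for passing to $\mathcal{P}^f$ so that only finitely many "branches" $q\in S$ occur and the construction is a genuine finite mixture over a measurable partition. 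The reduction to $\mathcal{P}^f(t,p)$ (Remark 6.11) and the essential-infimum/$\epsilon$-optimality machinery (Lemma 6.10) are precisely the tools that make this rigorous; everything else is a routine transcription of the proof of Theorem 4.7 with $\langle\bold p_\cdot,f\rangle$, $\langle\bold p_\cdot,h\rangle$ in place of $\langle\bar p,f\rangle$, $\langle\bar p,h\rangle$ and with $\textnormal{essinf}_{\mathbb{P}}$ added in front.
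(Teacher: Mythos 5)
The overall architecture of your proposal is the right one and matches the paper's: reduce to $\mathcal{P}^f(\bar t,\bar p)$ via Remark 6.11 so that $\bold p$ only takes finitely many values in some set $S$ at time $t-$, partition $\mathbb{R}^d$ into small cells $A^j$, build a pasted measure on $\Omega_{\bar t,t}\times\Omega_t$ by branching on $(X^{\bar t,\bar x}_t,\bold p_{t-})\in A^j\times\{q\}$ into $\epsilon$-optimal $\mathbb{P}^{j,q}$ for $W^+(t,y^j,q)$, paste the corresponding $\epsilon$-optimal stopping times, and use the Lipschitz estimates to recover $W^+(t,X^{\bar t,\bar x}_t,\bold p_{t-})$. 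This is exactly the paper's Theorem~7.3 proof transplanted from Theorem~4.7.

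However, there is a genuine gap in where you attach the $\epsilon$-optimality of the pre-$t$ measure, and it breaks the logic. You take $\mathbb{P}^\epsilon\in\mathcal{P}^f(\bar t,\bar p)$ to be $\epsilon$-optimal for $W^+(\bar t,\bar x,\bar p)$, i.e.\ for the full-horizon essinf. That choice only yields $\textnormal{essinf}_{\tau}\textnormal{esssup}_\sigma J(\bar t,\bar x,\tau,\sigma,\mathbb{P}^\epsilon)_{\bar t-}\le W^+(\bar t,\bar x,\bar p)+\epsilon$, a \emph{lower} bound on $W^+$, which is the direction one needs for a superdynamic programming principle (it is precisely what the paper does for $W^-$ in Theorem~7.4) and is useless here. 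For the subdynamic inequality the chain must be: fix $\mathbb{P}\in\mathcal{P}^f(\bar t,\bar p)$ and $\tau\in\bar{\mathcal{T}}(\bar t,t)$ (eventually chosen $\epsilon$-optimal for the \emph{right-hand side} of (7.3)); build the pasted $\mathbb{P}^\epsilon$ and $\hat\tau$ out of them; invoke the definition of $W^+$ as an essinf to get $W^+(\bar t,\bar x,\bar p)\le\textnormal{esssup}_\sigma J(\bar t,\bar x,\hat\tau,\sigma,\mathbb{P}^\epsilon)_{\bar t-}$; then estimate the post-$t$ tail to replace it by $W^+(t,X^{\bar t,\bar x}_t,\bold p_{t-})$ and use that $\mathbb{P}^\epsilon$ agrees with $\mathbb{P}$ on $\mathcal{F}_{\bar t,t}$ to pass back to the truncated expression under $\mathbb{P}$. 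In your write-up the pasted measure is built from a $\mathbb{P}^\epsilon$ that bears no relation to the infimum on the right-hand side, so after the estimate you are left with $W^+(\bar t,\bar x,\bar p)\le\textnormal{esssup}_\sigma(\text{truncated})_{\mathbb{P}^\epsilon,\bar\tau}+\mathrm{err}$, and the quantity on the right is $\ge$ the right-hand side of (7.3) (it is one particular term inside the essinf), not approximately equal to it; nothing then lets you close the inequality. The fix is simply to choose $\mathbb{P}$ and $\tau$ $\epsilon$-optimal for the right-hand side of (7.3) (approximating in $\mathcal{P}^f$ as in Remark~6.11 so the finite branching is available), as the paper does, and discard the $\epsilon$-optimality of $\mathbb{P}^\epsilon$ for $W^+(\bar t,\bar x,\bar p)$ altogether.
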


\begin{proof}
Let $\mathbb{P}\in\mathcal{P}^f(t,p)$, $t\in[\bar t,T]$. By assumption there exist $S=\{p^1,\ldots,p^k\}$, such that $\mathbb{P}[\bold p_{t-}\in S]=1$.
Furthermore let $(A_l)_{l\in\mathbb{N}}$ be a partition of $\mathbb{R}^d$ by Borel sets, such that diam$(A_l)\leq \bar \epsilon$ and choose for any $l\in\mathbb{N}$ some $y^l\in A_l$.\\

Define for any $l,m$ measures $\mathbb{P}^{l,m}\in\mathcal{P}^f(t,p^m)$, such that they are $\epsilon$-optimal for $W^+(t,p^m,y^l)$ and $\epsilon$-optimal stopping times $\tau^{l,m}$.
We define the probablility measure ${\mathbb{P}}^\epsilon$, such that on $\Omega=\Omega_{0,t}\times\Omega_{t}$
\begin{equation}
\begin{array}{l}
 \mathbb{P}^\epsilon=(\mathbb{P}|_{\Omega_{0,t}})\otimes \hat{\mathbb{P}},
\end{array}
\end{equation}
where for all $A\in\mathcal{B}(\Omega_t)$:
\begin{equation*}
\hat{\mathbb{P}}[A]=\sum_{m=1}^k\sum_{l=1}^\infty \mathbb{P}[X_{t}^{\bar t,\bar x}\in A^l,\bold p_{t-}=p_m]\mathbb{P}^{l,m}[A],
\end{equation*}
and the stopping time
\begin{equation}
\hat \tau=\begin{cases}
 \tau & \text{on } \{ \tau<t\}\\
\tau^{l,m} & \text{on } \{\bar \tau \geq t, X^{\bar t,\bar x}_t\in A^l,\bold p_{t-}=p^m \}. \\
\end{cases}
\end{equation}
Note that by definition $(B_s)_{s\in[\bar t,T]}$ is a Brownian motion under $\mathbb{P}^\epsilon$. Also $(\bold p_s)_{s\in[\bar t,T]}$ is a martingale, since for $ t\leq r\leq s\leq T$
\[
\mathbb{E}_{\mathbb{P}^\epsilon}[\bold p_s|\mathcal{F}_{r}]=\sum_{m=1}^k \sum_{l=1}^\infty 1_{\{X_{t}^{t,x}\in A^l,\bold p_{t-}=p^m\}}\mathbb{E}_{\mathbb{P}^{l,m}}[\bold p_s|\mathcal{F}_{r}]=\sum_{m=1}^k \sum_{l=1}^\infty 1_{\{X_{t}^{\bar t,\bar x}\in A^l,\bold p_{t-}=p^m\}}\bold p_r=\bold p_r.
\]
Furthermore the remaining conditions of Definition 6.1. are obviously met, hence $\mathbb{P}^\epsilon\in\mathcal{P}^{f}(t,p)$. By the definition of $W^+$ we have 
\begin{equation}
\begin{array}{rcl}
&&W^+ (\bar t,\bar x,\bar p)\\
\ \\
&&\ \ \ \leq  \textnormal{esssup}_{\sigma \in \bar{\mathcal{T}}(\bar t)}\mathbb{E}_{\mathbb{P}^\epsilon}\bigg[\langle \bold p_\sigma,f(\sigma,X^{\bar t,\bar x}_\sigma)\rangle1_{\sigma<\hat \tau,\sigma<T}+\langle \bold p_{\hat \tau}, h(\hat \tau,X^{\bar t,\bar x}_{\hat \tau})\rangle1_{ \hat\tau\leq\sigma,\hat\tau<T}\\
\ \\
&&\ \ \ \ \ \ \ \ \ \ \ \ \ \ \ \ \ \ \ \ \ \ \ \ \ \ \ \ \ \ \ \ \ \ \ \ + \langle \bold p_T,g(X^{t,x}_T)\rangle 1_{\sigma=\hat \tau=T}|\mathcal{F}_{t-}\bigg].
\end{array}
\end{equation}
Note that using the Lipschitz continuity of $W^+$ we have for any $\sigma\in\bar{\mathcal{T}}(\bar t)$
\begin{equation*}
\begin{array}{l}
\mathbb{E}_{\mathbb{P}^\epsilon}\bigg[\langle \bold p_\sigma,f(\sigma,X^{\bar t,\bar x}_\sigma)\rangle1_{t \leq\sigma<\hat \tau,\sigma<T}+\langle \bold p_{\hat \tau}, h(\hat \tau,X^{\bar t,\bar x}_{\hat \tau})\rangle1_{t\leq\hat\tau\leq\sigma,\hat \tau<T}+\langle \bold p_T,g(X^{\bar t,\bar x}_T)\rangle 1_{\sigma=\tau=T}|\mathcal{F}_{t-}\bigg]\\
\ \\
\ \ \ \leq \mathbb{E}_{\mathbb{P}^\epsilon}\bigg[W^+(t,y^l,p^m)  1_{\{X_{t}^{\bar t,\bar x}\in A^l,\bold p_{t-}=p^m\}}1_{\{\sigma\geq t,\hat \tau \geq t\}} |\mathcal{F}_{t-}\bigg]+c \delta+ 2\epsilon\\
\ \\
\ \ \ \leq \mathbb{E}_{\mathbb{P}^\epsilon}\bigg[W^+(t,X_{t}^{\bar t,\bar x},\bold p_{t-})1_{\{\sigma\geq t,\hat \tau \geq t\}} |\mathcal{F}_{t-}\bigg]+ 2 c \delta+2\epsilon.
\end{array}
\end{equation*}
Hence we have with (7.6)
\begin{equation*}
\begin{array}{rcl}
&&W^+ (\bar t,\bar x,\bar p)\\
\ \\
&&\ \ \ \leq  \textnormal{esssup}_{\sigma \in \bar{\mathcal{T}}(\bar t,t)}\mathbb{E}_{\mathbb{P}^\epsilon}\bigg[\langle \bold p_\sigma,f(\sigma,X^{\bar t,\bar x}_\sigma)\rangle1_{\sigma<\tau,\sigma<t}+\langle \bold p_{\tau}, h(\tau,X^{\bar t,\bar x}_{\tau})\rangle1_{\tau\leq\sigma,\tau<t}\\
\ \\
&&\ \ \ \ \ \ \ \ \ \ \ \ \ \ \ \ \ \ \ \ \ \ \ \ \ \ \ \ \ \ \ \ \ \ \ \ +W^+(t,X_{t}^{\bar t,\bar x},\bold p_{t-})1_{\{\sigma= \tau=t\}}|\mathcal{F}_{t-}\bigg]+ 2 c \delta+ 2\epsilon.
\end{array}
\end{equation*}
Now choosing $\mathbb{P}$, $\tau\in\bar{\mathcal{T}}(\bar t,t)$ such that they are $\epsilon$ optimal for the right hand side of (7.3) gives the desired result.
\end{proof}

\subsection{Superdynamic programming for $W^-$}

\begin{thm}
Let $(\bar t,\bar x,\bar p)\in[0,T]\times \mathbb{R}^d\times\Delta(I)$. Then for all $t\in[\bar t,T]$
\begin{equation}
\begin{array}{l}
W^- (\bar t,\bar x,\bar p)\\
\ \ \ \geq\textnormal{essinf}_{\mathbb{P}\in\mathcal{P}(\bar t,\bar p)}  \textnormal{esssup}_{\sigma \in \bar{\mathcal{T}}(\bar t,t)}\textnormal{essinf}_{\tau\in \bar{\mathcal{T}}(\bar t,t)}\mathbb{E}_{\mathbb{P}}\bigg[\langle \bold p_\sigma,f(\sigma, X^{\bar t,\bar x}_{\sigma})\rangle 1_{\sigma<\tau,\sigma<t}\\
 \ \ \ \ \ \ \ \ \ \ \ \ \ \ \ \ \ \ \ \ \ +\langle \bold p_\tau, h(\tau,X^{\bar t,\bar x}_{\tau} )1_{\tau\leq\sigma,\tau<t}\rangle+ W^-(t,X^{\bar t,\bar x}_t,\bold p_{t-}) 1_{\tau=\sigma=t}|\mathcal{F}_{\bar t-}\bigg].
\end{array}
\end{equation}
\end{thm}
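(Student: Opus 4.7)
The plan is to mirror the construction in the proof of Theorem 7.3, but now pasting stopping strategies of the uninformed player rather than martingale measures, and starting from an $\epsilon$-optimal measure for the full game $W^-(\bar t,\bar x,\bar p)$.

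Fix $\epsilon,\delta>0$. By Lemma 6.6 together with Remark 6.7, choose $\mathbb{P}^\epsilon\in\mathcal{P}^f(\bar t,\bar p)$ which is $\epsilon$-optimal for $W^-(\bar t,\bar x,\bar p)$, and let $\{p^1,\ldots,p^k\}$ be the finite set of values taken by $\bold p_{t-}$ under $\mathbb{P}^\epsilon$. Fix also a Borel partition $(A^l)_{l\in\mathbb{N}}$ of $\mathbb{R}^d$ with $\textnormal{diam}(A^l)\leq\delta$ and representatives $y^l\in A^l$. Let $\mathbb{P}^{\epsilon,l,m}$ denote the regular conditional law on $\Omega_t$ of $\mathbb{P}^\epsilon$ given the event $\{X^{\bar t,\bar x}_t\in A^l\}\cap\{\bold p_{t-}=p^m\}$; a direct verification from Definition 6.1 shows that $\mathbb{P}^{\epsilon,l,m}\in\mathcal{P}(t,p^m)$, since $(\bold p_s)_{s\geq t}$ remains a martingale starting at $p^m$, the terminal constraint $\bold p_T\in\{e_i\}$ is preserved, and $(B_s-B_t)_{s\geq t}$ is still a Brownian motion. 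For each such $(l,m)$, pick $\sigma^{l,m}\in\bar{\mathcal{T}}(t)$ which is $\epsilon$-optimal for $\textnormal{esssup}_\sigma\textnormal{essinf}_\tau J(t,y^l,\tau,\sigma,\mathbb{P}^{\epsilon,l,m})_{t-}$; this quantity is itself at least $W^-(t,y^l,p^m)$, because $W^-$ is an essinf over $\mathcal{P}(t,p^m)$ and $\mathbb{P}^{\epsilon,l,m}$ lies in that set.

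For any $\sigma\in\bar{\mathcal{T}}(\bar t,t)$ form the paste $\hat\sigma\in\bar{\mathcal{T}}(\bar t)$ by setting $\hat\sigma=\sigma$ on $\{\sigma<t\}$ and $\hat\sigma=\sigma^{l,m}$ on $\{\sigma=t,\,X^{\bar t,\bar x}_t\in A^l,\,\bold p_{t-}=p^m\}$. For an arbitrary $\tau\in\bar{\mathcal{T}}(\bar t)$, decompose $J(\bar t,\bar x,\tau,\hat\sigma,\mathbb{P}^\epsilon)_{\bar t-}$ according to $\{\sigma\wedge\tau<t\}$ and $\{\sigma\wedge\tau\geq t\}$. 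On the latter event, conditioning on $\mathcal{F}_t$, using the Markov property of $X^{\bar t,\bar x}$ and the Lipschitz bounds from assumption (A), the continuation part equals $J(t,y^l,\tau|_{[t,T]},\sigma^{l,m},\mathbb{P}^{\epsilon,l,m})_{t-}$ on $\{X^{\bar t,\bar x}_t\in A^l,\bold p_{t-}=p^m\}$ up to an error of order $\delta$. Since each $\tau\in\bar{\mathcal{T}}(\bar t)$ factors as $\tau\wedge t\in\bar{\mathcal{T}}(\bar t,t)$ paired with a continuation $\tau|_{[t,T]}\in\bar{\mathcal{T}}(t)$ on $\{\tau\geq t\}$, the essinf over $\tau$ decomposes into two independent essinfs; on the continuation the inner essinf combined with the $\epsilon$-optimality of $\sigma^{l,m}$ yields the lower bound $W^-(t,y^l,p^m)-\epsilon$, which Proposition 7.2 upgrades to $W^-(t,X^{\bar t,\bar x}_t,\bold p_{t-})-\epsilon-c\delta$.

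Taking $\textnormal{esssup}$ over $\sigma\in\bar{\mathcal{T}}(\bar t,t)$, invoking the $\epsilon$-optimality of $\mathbb{P}^\epsilon$, and noting that the essinf over $\mathcal{P}(\bar t,\bar p)$ is bounded above by the value at the particular $\mathbb{P}^\epsilon$, one obtains
\[
W^-(\bar t,\bar x,\bar p)+\epsilon\geq\textnormal{essinf}_{\mathbb{P}\in\mathcal{P}(\bar t,\bar p)}\textnormal{esssup}_{\sigma\in\bar{\mathcal{T}}(\bar t,t)}\textnormal{essinf}_{\tau\in\bar{\mathcal{T}}(\bar t,t)}\mathbb{E}_\mathbb{P}\bigl[\cdots\bigr]-\epsilon-c\delta,
\]
with the omitted integrand being exactly that of (7.7); letting $\epsilon,\delta\to 0$ gives the claim. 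The main obstacle is the rigorous joint-measurable construction of the conditional measures $\mathbb{P}^{\epsilon,l,m}$ and of the $\epsilon$-optimal cell-wise stopping times $\sigma^{l,m}$, together with the justification that pasting them into $\hat\sigma$ produces the expected conditional decomposition of the payoff; the reduction to $\mathcal{P}^f(\bar t,\bar p)$ afforded by Remark 6.7 is essential here, as it turns the construction into a finite sum over cells $(l,m)$ to which Lipschitz continuity of $W^-$ in $x$ and $p$ can be applied.
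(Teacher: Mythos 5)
Your argument follows the paper's proof of this theorem essentially step for step: start from an $\epsilon$-optimal $\mathbb{P}^\epsilon\in\mathcal{P}^f(\bar t,\bar p)$ via Lemma 6.6 and Remark 6.7, condition on the finitely many cells $\{X^{\bar t,\bar x}_t\in A^l,\bold p_{t-}=p^m\}$ to obtain measures in $\mathcal{P}(t,p^m)$, pick cell-wise $\epsilon$-optimal $\sigma^{l,m}$, paste into $\hat\sigma$, and use the Lipschitz and Markov estimates to lower-bound the continuation by $W^-(t,X^{\bar t,\bar x}_t,\bold p_{t-})-\epsilon-c\delta$. The only cosmetic difference is that you phrase $\mathbb{P}^{\epsilon,l,m}$ directly as a regular conditional law on $\Omega_t$, whereas the paper extends it to a full product measure $(\mathbb{P}_0\otimes\delta(p^m))\otimes\hat{\mathbb{P}}^{l,m}$ on $\Omega$; this does not change the argument, and you also acknowledge the same measurability technicalities the paper leaves implicit.
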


\begin{proof}
We choose a $\mathbb{P}^\epsilon\in\mathcal{P}^f(\bar t,\bar p)$ to be $\epsilon$-optimal for $W^-(\bar t,\bar x,\bar p)$,
\begin{equation}
\begin{array}{l}
W^- (\bar t,\bar x,\bar p)\\
\ \ \ \geq \textnormal{essup}_{\sigma\in\bar{\mathcal{T}}(\bar t)}\textnormal{essinf}_{\tau\in \bar{\mathcal{T}}(\bar t)}\mathbb{E}_{\mathbb{P}^\epsilon}\bigg[\langle \bold p_\sigma,f(\sigma,X^{t,x}_\sigma)\rangle1_{\sigma<\tau,\sigma<T}+\langle \bold p_\tau, h(\tau,X^{t,x}_\tau)\rangle1_{ \tau\leq\sigma,\tau<T}\\
\ \ \ \ \ \ \ \ \ \ \ \ \ \ \ \ \ \ \ \ \ \ \ \ \ \ \ \ \ + \langle \bold p_T,g(X^{t,x}_T)\rangle 1_{\sigma=\tau=T}|\mathcal{F}_{\bar t-}\bigg]-\epsilon.
\end{array}
\end{equation}
By assumption there exist $S=\{p^1,\ldots,p^k\}$, such that $\mathbb{P}^\epsilon[\bold p_{t-}\in S]=1$.
Furthermore let $(A_l)_{l\in\mathbb{N}}$ be a partition of $\mathbb{R}^d$ by Borel sets, such that diam$(A_l)\leq \bar \epsilon$ and choose for any $l\in\mathbb{N}$ some $y^l\in A_l$.\\
With the help of $\mathbb{P}^\epsilon$ define $\mathbb{P}^{l,m}$ as
\begin{equation}
\mathbb{P}^{l,m}=(\mathbb{P}_0\otimes\delta(p^m))\otimes\hat{\mathbb{P}}^{l,m},
\end{equation}
where $\delta(p^m)$ denotes the measure under which $\bold p$ is constant and equal to $p^m$, $\mathbb{P}_0$ is a Wiener measure on $\Omega_{0,t}$ and for all $A\in\mathcal{B}(\Omega_t)$ 
\begin{equation*}
\hat{\mathbb{P}}^{l,m}=\mathbb{P}^\epsilon[\bold p_{t-}=p^m,X^{\bar t,\bar x}_t\in A^l]\mathbb{P}^{\epsilon}[A|\bold p_{t-}=p^m,X^{\bar t,\bar x}_t\in A^l].
\end{equation*}
Furthermore define stopping times $\sigma^{l,m}\in\bar{\mathcal{T}}(t)$ which are $\epsilon$-optimal for
\begin{equation}
\begin{array}{l}
 \textnormal{esssup}_{\sigma\in\bar{\mathcal{T}}(t)} \textnormal{essinf}_{\tau\in\bar{\mathcal{T}}(t)}\mathbb{E}_{\mathbb{P}^{l,m}}\bigg[\langle \bold p_\sigma,f(\sigma,X^{t,y^l}_\sigma)\rangle1_{\sigma<\tau,\sigma<T}\\
\ \ \ \ \ \ \ \ \ \ \ \ \ \ \ \ +\langle \bold p_{\tau}, h(\tau,X^{t,y^l}_{ \tau})\rangle1_{\tau\leq\sigma,\tau<T}+\langle \bold p_T,g(X^{\bar t,\bar x}_T)\rangle 1_{\sigma=\tau=T}|\mathcal{F}_{t-}\bigg],
\end{array}
\end{equation}
which implies that for all $\tau\in\bar{\mathcal{T}}(t)$
\begin{equation}
\begin{array}{l}
\mathbb{E}_{\mathbb{P}^{l,m}}\bigg[\langle \bold p_{\sigma^{l,m}},f(\sigma^{l,m},X^{t,y^l}_{\sigma^{l,m}})\rangle1_{{\sigma}^{l,m}<\tau,{\sigma}^{l,m}<T}+\langle \bold p_{\tau}, h(\tau,X^{\bar t,\bar x}_{ \tau})\rangle1_{\tau\leq\sigma^{l,m},\tau<T}\\
\ \ \ \ \ \ \ \ \ \ \ \ \ \ \ \ +\langle \bold p_T,g(X^{\bar t,\bar x}_T)\rangle 1_{\sigma^{l,m}=\tau=T}|\mathcal{F}_{t-}\bigg]\\
\geq \textnormal{esssup}_{\sigma\in\bar{\mathcal{T}}(t)} \textnormal{essinf}_{\tau\in\bar{\mathcal{T}}(t)}\mathbb{E}_{\mathbb{P}^{l,m}}\bigg[\langle \bold p_\sigma,f(\sigma,X^{t,y^l}_\sigma)\rangle1_{\sigma<\tau,\sigma<T}\\
\ \ \ \ \ \ \ \ \ \ \ \ \ \ \ \ +\langle \bold p_{\tau}, h(\tau,X^{t,y^l}_{ \tau})\rangle1_{\tau\leq\sigma,\tau<T}+\langle \bold p_T,g(X^{\bar t,\bar x}_T)\rangle 1_{\sigma=\tau=T}|\mathcal{F}_{t-}\bigg]-\epsilon
\end{array}
\end{equation}
and
\begin{equation}
\begin{array}{l}
\mathbb{E}_{\mathbb{P}^{l,m}}\bigg[\langle \bold p_{\sigma^{l,m}},f(\sigma^{l,m},X^{t,y^l}_{\sigma^{l,m}})\rangle1_{{\sigma}^{l,m}<\tau,{\sigma}^{l,m}<T}+\langle \bold p_{\tau}, h(\tau,X^{\bar t,\bar x}_{ \tau})\rangle1_{\tau\leq {\sigma}^{l,m},\tau<T}\\
\ \ \ \ \ \ \ \ \ \ \ \ \ \ \ \ +\langle \bold p_T,g(X^{\bar t,\bar x}_T)\rangle 1_{\sigma^{l,m}=\tau=T}|\mathcal{F}_{t-}\bigg]\\
\geq
\textnormal{essinf}_{\mathbb{P}\in\mathcal P({t,p^m})}\textnormal{esssup}_{\sigma\in\bar{\mathcal{T}}(t)} \textnormal{essinf}_{\tau\in\bar{\mathcal{T}}(t)}\mathbb{E}_{P}\bigg[\langle \bold p_\sigma,f(\sigma,X^{t,y^l}_\sigma)\rangle1_{\sigma<\tau,\sigma<T}\\
\ \ \ \ \ \ \ \ \ \ \ \ \ \ \ \ +\langle \bold p_{\tau}, h(\tau,X^{t,y^l}_{ \tau})\rangle1_{\tau\leq\sigma,\tau<T}+\langle \bold p_T,g(X^{\bar t,\bar x}_T)\rangle 1_{\sigma=\tau=T}|\mathcal{F}_{t-}\bigg]-\epsilon\\
= W^-(t,p^m,y^l)-\epsilon.
\end{array}
\end{equation}

For any $\sigma\in\bar{\mathcal{T}}(\bar t)$ define
\begin{equation}
\hat \sigma=\begin{cases}
 \sigma & \text{on } \{ \sigma<t\}\\
\sigma^{l,m} & \text{on } \{\sigma \geq t, X^{\bar t,\bar x}_t\in A^l,\bold p_{t-}=p^m \}. \\
\end{cases}
\end{equation}

Note that using the Lipschitz continuity of the coefficients and $W^-$ and the definition of $\hat \sigma$ and $\mathbb{P}^{l,m}$ we have for any $\tau\in\bar{\mathcal{T}}(\bar t)$ 
\begin{equation*}
\begin{array}{l}
\mathbb{E}_{\mathbb{P}^\epsilon}\bigg[\langle \bold p_{\hat\sigma},f({\hat\sigma},X^{t,x}_{\hat\sigma})\rangle1_{t\leq{\hat\sigma}<\tau,\hat\sigma<T}+\langle \bold p_\tau, h(\tau,X^{t,x}_\tau)\rangle1_{t\leq\tau\leq{\hat\sigma},\tau<T}\\
\ \ \ \ \ \ \ \ \ \ \ \ \ \ \ \ \ \ \ \ \ \ \ \ \ \ \ \ \ + \langle \bold p_T,g(X^{t,x}_T)\rangle 1_{\hat \sigma=\tau=T,\hat\sigma,\tau\geq t}|\mathcal{F}_{\bar t-}\bigg]\\
=\mathbb{E}_{\mathbb{P}^\epsilon}\bigg[\mathbb{E}_{\mathbb{P}^\epsilon}\bigg[\langle \bold p_{\hat \sigma},f(\hat \sigma,X^{t,x}_{\hat \sigma})\rangle1_{t\leq\hat \sigma<\tau,\hat \sigma<T}+\langle \bold p_\tau, h(\tau,X^{t,x}_\tau)\rangle1_{t\leq\tau\leq{\hat\sigma},\tau<T}\\
\ \ \ \ \ \ \ \ \ \ \ \ \ \ \ \ \ \ \ \ \ \ \ \ \ \ \ \ \ + \langle \bold p_T,g(X^{t,x}_T)\rangle 1_{\hat \sigma=\tau=T,\hat \sigma,\tau\geq t}|\mathcal{F}_{t-}\bigg]|\mathcal{F}_{\bar t-}\bigg]\\
\geq \mathbb{E}_{\mathbb{P}^\epsilon}\bigg[1_{\sigma,\tau \geq t, X^{\bar t,\bar x}_t\in A^l,\bold p_{t-}=p^m }\mathbb{E}_{\mathbb{P}^{l,m}}\bigg[\langle \bold p_{\sigma^{l,m}},f(\sigma^{l,m},X^{t,y^l}_{\sigma^{l,m}})\rangle1_{{\sigma}^{l,m}<\tau,\sigma^{l,m}<T}\\
\ \ \ \ \ \ \ \ \ \ \ \ \ \ \ \ +\langle \bold p_{\tau}, h(\tau,X^{\bar t,\bar x}_{ \tau})\rangle1_{\tau\leq {\sigma}^{l,m},\tau<T}+\langle \bold p_T,g(X^{\bar t,\bar x}_T)\rangle 1_{\sigma^{l,m}=\tau=T}|\mathcal{F}_{t-}\bigg]|\mathcal{F}_{\bar t-}\bigg]\\
\ \ \ \ \ \ \ \ -c\delta\\
\geq \mathbb{E}_{\mathbb{P}^\epsilon}\left[1_{\sigma,\tau \geq t, X^{\bar t,\bar x}_t\in A^l,\bold p_{t-}=p^m } W(t,y^l,p^m)|\mathcal{F}_{\bar t-}\right]-c\delta-\epsilon\\
\ \\
\geq \mathbb{E}_{\mathbb{P}^\epsilon}\left[ 1_{\sigma,\tau \geq t}W(t,X^{\bar t,\bar x}_t,\bold p_{t-})|\mathcal{F}_{\bar t-}\right]-2c\delta-\epsilon.
\end{array}
\end{equation*}
This gives with (7.12) for any $\sigma\in\bar{\mathcal{T}}(\bar t,t)$
\begin{equation}
\begin{array}{l}
W^- (\bar t,\bar x,\bar p)\\
\ \ \ \geq \textnormal{essinf}_{\tau\in \bar{\mathcal{T}}(\bar t,t)}\mathbb{E}_{\mathbb{P}^\epsilon}\bigg[\langle \bold p_\sigma,f(\sigma,X^{t,x}_\sigma)\rangle1_{\sigma<\tau,\sigma<t}+\langle \bold p_\tau, h(\tau,X^{t,x}_\tau)\rangle1_{\tau\leq\sigma,\tau<t}\\
\ \ \ \ \ \ \ \ \ \ \ \ \ \ \ \ \ \ \ \ \ \ \ \ \ \ \ \ \ +W(t,X^{\bar t,\bar x}_t,\bold p_{t-}) 1_{\sigma=\tau=t}|\mathcal{F}_{\bar t-}\bigg]-2c\delta-\epsilon.
\end{array}
\end{equation}
So in particular when choosing $\bar \sigma$ $\epsilon$-optimal for
\begin{equation}
\begin{array}{l}
\textnormal{esssup}_{\sigma\in \bar{\mathcal{T}}(\bar t,t)}\textnormal{essinf}_{\tau\in \bar{\mathcal{T}}(\bar t,t)}\mathbb{E}_{\mathbb{P}^\epsilon}\bigg[\langle \bold p_\sigma,f(\sigma,X^{t,x}_\sigma)\rangle1_{\sigma<\tau,\sigma<t}\\
\ \ \ \ \ \ \ \ \ \ \ \ \ \ \ \ \ \ \ \ \ \ \ \ +\langle \bold p_\tau, h(\tau,X^{t,x}_\tau)\rangle1_{ \tau\leq\sigma,\tau<t}+W(t,X^{\bar t,\bar x}_t,\bold p_{t-}) 1_{\sigma=\tau=t}|\mathcal{F}_{\bar t-}\bigg]
\end{array}
\end{equation}
we get
\begin{equation*}
\begin{array}{l}
W^- (\bar t,\bar x,\bar p)\\
\ \ \ \geq \textnormal{esssup}_{\sigma\in \bar{\mathcal{T}}(\bar t,t)}\textnormal{essinf}_{\tau\in \bar{\mathcal{T}}(\bar t,t)}\mathbb{E}_{\mathbb{P}^\epsilon}\bigg[\langle \bold p_\sigma,f(\sigma,X^{t,x}_\sigma)\rangle1_{\sigma<\tau,\sigma<t}+\langle \bold p_\tau, h(\tau,X^{t,x}_\tau)\rangle1_{\tau\leq\sigma,\tau<t}\\
\ \ \ \ \ \ \ \ \ \ \ \ \ \ \ \ +\langle \bold p_\tau, h(\tau,X^{t,x}_\tau)\rangle1_{t>\sigma\geq \tau}+W(t,X^{\bar t,\bar x}_t,\bold p_{t-}) 1_{\sigma=\tau=t}|\mathcal{F}_{\bar t-}\bigg]-2c\delta-2\epsilon
\end{array}
\end{equation*}
and the claim follows by taking the essential infimum in $\mathbb{P}\in\mathcal{P}(\bar t,\bar p)$ since $\delta$ and $\epsilon$ can be chosen arbitrarily small.

\end{proof}

\section{Viscosity solution property $W^+,W^-$}

\subsection{Subsolution property of $W^+$}

\begin{thm}
$W^+$ is a viscosity subsolution to the obstacle problem
\begin{equation}
\begin{array}{l}
	\max\bigg\{\max\{\min\{(-\frac{\partial } {\partial t}-\mathcal{L})[w],w-\langle f(t,x),p\rangle\},\\
	\ \ \ \ \ \ \ \ \ \ \ \ \ \ \ \ \ \ \ \ \ \ \ \  \ \ \ \ \ w-\langle h(t,x),p\rangle\}, -\lambda_{\min}\left(p,\frac{\partial ^2 w}{\partial p^2}\right)\bigg\}=0
\end{array}
\end{equation}
with terminal condition $w(T,x,p)=\sum_{i=1,\ldots,I}p_ig_i(x)$.
\end{thm}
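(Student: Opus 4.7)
Fix $(\bar t,\bar x,\bar p)\in[0,T)\times\mathbb{R}^d\times\mathrm{Int}(\Delta(I))$ and a test function $\phi$ such that $W^+-\phi$ has a strict global maximum at $(\bar t,\bar x,\bar p)$ with $W^+(\bar t,\bar x,\bar p)=\phi(\bar t,\bar x,\bar p)$. The plan is to check the three conditions characterising the subsolution property, as in Definition 3.5, by mimicking the proof of Theorem 5.1 but replacing the subdynamic programming principle for $V^+$ by the one for $W^+$ (Theorem 7.3).

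First, the convexity of $W^+$ in $p$ (Proposition 7.1) implies that at any interior maximum of $W^+-\phi$ the Hessian of $\phi$ in $p$ restricted to the tangent cone $T_{\Delta(I)}(\bar p)$ is nonnegative, so $\lambda_{\min}(\bar p,\partial^2\phi/\partial p^2)\ge 0$ at $(\bar t,\bar x,\bar p)$. Next, for the upper obstacle, I pick any $\mathbb{P}\in\mathcal{P}(\bar t,\bar p)$ and in the definition of $W^+$ let the informed player use $\tau=\bar t$; combined with the fact that $\bold p_{\bar t-}=\bar p$ and with the martingale property of $\bold p$, this gives
\[
W^+(\bar t,\bar x,\bar p)\le \mathbb{E}_{\mathbb{P}}\bigl[\langle\bold p_{\bar t},h(\bar t,\bar x)\rangle\mid\mathcal{F}_{\bar t-}\bigr]=\langle\bar p,h(\bar t,\bar x)\rangle,
\]
so $\phi(\bar t,\bar x,\bar p)-\langle\bar p,h(\bar t,\bar x)\rangle\le 0$.

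For the remaining interior PDE condition, assume
\[
\phi(\bar t,\bar x,\bar p)-\langle\bar p,f(\bar t,\bar x)\rangle>0\qquad\text{and}\qquad(-\partial_t-\mathcal{L})[\phi](\bar t,\bar x,\bar p)>0,
\]
and derive a contradiction. The key idea is to use in Theorem 7.3 the specific ``non-revealing'' measure $\mathbb{P}^*\in\mathcal{P}(\bar t,\bar p)$ under which $\bold p_s\equiv\bar p$ on $[\bar t,T)$ and $\bold p_T=e_i$ with probability $\bar p_i$; this measure is admissible since $\bold p$ is a martingale (its mean equals $\bar p$ at $T$), the terminal state lies in $\{e_i\}$, and $B$ is a Brownian motion independent of $\bold p_T$. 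Plugging $\mathbb{P}^*$ into Theorem 7.3 and taking $\tau=t$ reduces, for $t\in(\bar t,T)$, to the classical-looking inequality
\begin{equation*}
\phi(\bar t,\bar x,\bar p)\le\sup_{\sigma\in\bar{\mathcal{T}}(\bar t,t)}\mathbb{E}_{\mathbb{P}^*}\!\Bigl[\langle\bar p,f(\sigma,X^{\bar t,\bar x}_\sigma)\rangle 1_{\sigma<t}+\phi(t,X^{\bar t,\bar x}_t,\bar p)1_{\sigma=t}\Bigr]
\end{equation*}
up to an $\epsilon(t-\bar t)$ optimization error, exactly the inequality exploited in (5.4).

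From there the proof proceeds verbatim as in Theorem 5.1: on a small time-space neighbourhood $[\bar t,\bar t+h]\times B_h(\bar x)$ the strict inequalities above persist with a uniform lower bound $\delta>0$; one applies It\^o's formula to $s\mapsto\phi(s,X^{\bar t,\bar x}_s,\bar p)$ between $\bar t$ and an $\epsilon(t-\bar t)$-optimal $\sigma^\epsilon$, splits on the event $A=\{\sup_{s\in[\bar t,t]}|X^{\bar t,\bar x}_s-\bar x|>h\}$ of probability $O((t-\bar t)^2/h^4)$, and uses the lower bound $\phi-\langle\bar p,f(\cdot,\cdot)\rangle\ge\delta$ on $\{\sigma^\epsilon<t\}$ together with $(-\partial_t-\mathcal{L})[\phi]\ge\delta$ on $A^c$ to obtain
\[
\delta-2c(t-\bar t)/h^4-\epsilon\le 0,
\]
which fails for $t-\bar t$ and $\epsilon$ small enough. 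I expect no serious obstacle beyond checking that the non-revealing measure $\mathbb{P}^*$ genuinely lies in $\mathcal{P}(\bar t,\bar p)$ and that its constancy on $[\bar t,t]$ lets the expectations $\mathbb{E}_{\mathbb{P}^*}[\langle\bold p_\sigma,f(\sigma,X_\sigma)\rangle]$ collapse to $\mathbb{E}_{\mathbb{P}^*}[\langle\bar p,f(\sigma,X_\sigma)\rangle]$ for $\sigma<T$; once this reduction is in place the computation is identical to that of Theorem 5.1.
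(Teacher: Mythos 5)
Your proposal is correct and follows essentially the same route as the paper: convexity of $W^+$ gives the $\lambda_{\min}$ condition, plugging the non-revealing measure (the paper writes it $\mathbb{P}_0\otimes\delta(\bar p)$) into the subdynamic programming principle of Theorem 7.3 collapses the problem to the classical inequality already handled in Theorem 5.1, and the It\^o/contradiction argument then goes through verbatim. You are slightly more explicit than the paper about checking that the non-revealing measure is admissible (via a terminal jump to $\{e_i\}$ independent of $B$) and about deriving the upper obstacle bound $W^+\le\langle h,p\rangle$ directly from the definition rather than citing the analogue of Proposition 4.3, but the underlying argument is the same.
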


\begin{proof}
Let $\phi:[0,T]\times\mathbb{R}^d\times\Delta(I)\rightarrow \mathbb{R}$ be a test function such that $W^+-\phi$ has a strict global maximum at $(\bar{t},\bar x,\bar p)\in[0,T)\times\mathbb{R}^d\times\textnormal{Int}(\Delta (I))$ with $W(\bar{t},\bar x,\bar p)-\phi(\bar{t},\bar x,\bar p)=0$.
We have to show, that
\begin{equation}
\begin{array}{l}
	\max\big\{\max\{\min\{(-\frac{\partial } {\partial t}-\mathcal{L})[\phi],\phi-\langle f(t,x),p\rangle\},\\
	\ \\
	\ \ \ \ \ \ \ \ \ \ \ \ \ \ \ \ \ \ \ \ \ \ \ \  \ \ \ \ \ \phi-\langle h(t,x),p\rangle\}, -\lambda_{\min}\left(p,\frac{\partial ^2 w}{\partial p^2}\right)\big\}\leq0
\end{array}
\end{equation}
at $(\bar{t},\bar x,\bar p)$.\\
By Proposition 7.2 $W^+$ is convex in $p$. So since $\bar p\in\textnormal{Int}(\Delta (I))$, we have that 
\[-\lambda_{\min}\left(\frac{\partial ^2 \phi}{\partial p^2}(\bar{t},\bar x,\bar p)\right)\leq 0.\] 
So it remains to show, that
\begin{equation}
\begin{array}{l}
	\max\{\min\{(-\frac{\partial } {\partial t}-\mathcal{L})[\phi],\phi-\langle f(t,x),p\rangle\},\phi-\langle h(t,x),p\rangle\}\leq0
\end{array}
\end{equation}
at $(\bar{t},\bar x,\bar p)$. Note that the subdynamic programming for $W^+$ implies for $\mathbb{P}=\mathbb{P}_0\otimes\delta(\bar p)$ in particular
\begin{equation*}
\begin{array}{l}
W^+ (\bar t,\bar x,\bar p)\\
\ \ \ \leq  \textnormal{essinf}_{\tau\in \bar{\mathcal{T}}(\bar t,t)}\textnormal{esssup}_{\sigma \in \bar{\mathcal{T}}(\bar t,t)}\mathbb{E}_{\mathbb{P}}\bigg[\langle \bar p,f(\sigma, X^{\bar t,\bar x}_{\sigma}) 1_{\sigma<\tau,\sigma<t}\rangle\\
 \ \ \ \ \ \ \ \ \ \ \ \ \ \ \ \ \ \ \ \ \ +\langle \bar p, h(\tau,X^{\bar t,\bar x}_{\tau} )1_{\tau\leq\sigma,\tau<t}\rangle+ W^+(t,X^{\bar t,\bar x}_t,\bar p) 1_{\tau=\sigma=t}|\mathcal{F}_{\bar t-}\bigg].
\end{array}
\end{equation*}
So (8.2) follows by the standard arguments we mentioned already in the proof of Theorem 5.1.

\end{proof}

\subsection{Supersolution property of $W^-$}

\begin{thm}
$W^-$ is a viscosity supersolution to the obstacle problem
\begin{equation}
\begin{array}{l}
	\max\bigg\{\max\{\min\{(-\frac{\partial } {\partial t}-\mathcal{L})[w],w-\langle f(t,x),p\rangle\},\\
	\ \ \ \ \ \ \ \ \ \ \ \ \ \ \ \ \ \ \ \ \ \ \ \  \ \ \ \ \ w-\langle h(t,x),p\rangle\}, -\lambda_{\min}\left(p,\frac{\partial ^2 w}{\partial p^2}\right)\bigg\}=0
\end{array}
\end{equation}
with terminal condition $w(T,x,p)=\sum_{i=1,\ldots,I}p_ig_i(x)$.
\end{thm}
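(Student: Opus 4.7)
The strategy is to mirror the proof of Theorem 5.2 for $V^-$ but exploit the direct superdynamic programming principle Theorem 7.4 in place of the convex-conjugate detour, which is available for $W^-$ but not for $V^-$. First dispose of two easy cases: if $\lambda_{\min}(\bar p,\partial_p^2\phi(\bar t,\bar x,\bar p))\le 0$ then $-\lambda_{\min}\ge 0$ and the supersolution inequality is immediate; if $\bar p=e_i$ is a vertex of $\Delta(I)$ then the martingale constraint in Definition 6.1 forces $\bold p\equiv e_i$ under every $\mathbb{P}\in\mathcal{P}(\bar t,e_i)$, hence $W^-(\cdot,\cdot,e_i)$ equals the value of the standard complete-information Dynkin game with payoffs $(f_i,h_i,g_i)$ and the classical PDE theory supplies the supersolution property.

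Henceforth assume $\bar p\notin\{e_1,\dots,e_I\}$ and $\lambda_{\min}(\bar p,\partial_p^2\phi)>0$. By the equivalent form in Definition 3.6 it suffices to verify (i) $\phi(\bar t,\bar x,\bar p)\ge\langle f(\bar t,\bar x),\bar p\rangle$ and (ii) if $\phi(\bar t,\bar x,\bar p)<\langle h(\bar t,\bar x),\bar p\rangle$ then $(\partial_t+\mathcal{L})[\phi](\bar t,\bar x,\bar p)\le 0$. Point (i) follows by inserting $\sigma=\bar t$ into the inner essential supremum of Theorem 7.4: using $f\le h$ and the martingale identity $\mathbb{E}_{\mathbb{P}}[\bold p_{\bar t}\mid\mathcal{F}_{\bar t-}]=\bar p$ one gets $\textnormal{esssup}_\sigma\textnormal{essinf}_\tau J(\bar t,\bar x,\tau,\sigma,\mathbb{P})_{\bar t-}\ge\langle\bar p,f(\bar t,\bar x)\rangle$ for every $\mathbb{P}\in\mathcal{P}(\bar t,\bar p)$; taking $\textnormal{essinf}_{\mathbb{P}}$ and applying Proposition 6.6 yields $W^-(\bar t,\bar x,\bar p)\ge\langle\bar p,f(\bar t,\bar x)\rangle$, hence (i).

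For (ii) argue by contradiction: assume $(\partial_t+\mathcal{L})[\phi](\bar t,\bar x,\bar p)>0$, and pick $\delta>0$ together with a neighborhood $N$ of $(\bar t,\bar x,\bar p)$ on which $(\partial_t+\mathcal{L})[\phi]>\delta$, $\phi(t,x,p)<\langle p,h(t,x)\rangle-\delta$, and $\partial_p^2\phi$ is positive semidefinite on the tangent cone to $\Delta(I)$. Fix $t>\bar t$ small and apply Ito's formula for c\`adl\`ag semimartingales to $s\mapsto\phi(s,X_s^{\bar t,\bar x},\bold p_{s-})$ under an arbitrary $\mathbb{P}\in\mathcal{P}(\bar t,\bar p)$: the $dB_s$- and $d\bold p_s$-integrals have zero $\mathbb{P}$-expectation (Brownian motion and martingale respectively), the drift contributes at least $\delta(t-\bar t)$ while the process remains in $N$, and both the continuous quadratic-variation term $\tfrac12\partial_p^2\phi\,d\langle\bold p^c\rangle_s$ and the jump increments $\phi(s,X_s,\bold p_s)-\phi(s,X_s,\bold p_{s-})-\partial_p\phi\cdot\Delta\bold p_s$ are non-negative on $N$ by local convexity in $p$. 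Controlling the exit probability from $N$ by standard hitting-time estimates and using $\phi(\tau,X_\tau,\bold p_{\tau-})\le\langle\bold p_{\tau-},h(\tau,X_\tau)\rangle-\delta$ on $N$ (together with the $\mathcal{F}_{\tau-}$-measurability of $h(\tau,X_\tau)$, which follows from continuity of $X$, to handle the jump $\Delta\bold p_\tau$), one obtains, uniformly in $\tau\in\bar{\mathcal{T}}(\bar t,t)$ and $\mathbb{P}$,
\[
\mathbb{E}_{\mathbb{P}}\!\left[\langle\bold p_\tau,h(\tau,X_\tau^{\bar t,\bar x})\rangle 1_{\tau<t}+W^-(t,X_t^{\bar t,\bar x},\bold p_{t-})1_{\tau=t}\right]\ge\phi(\bar t,\bar x,\bar p)+\tfrac{\delta}{4}(t-\bar t),
\]
where we also used $W^-\ge\phi$ globally. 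Plugging $\sigma=t$ into the superDPP of Theorem 7.4 and taking $\textnormal{essinf}_{\mathbb{P}}$ gives $W^-(\bar t,\bar x,\bar p)\ge\phi(\bar t,\bar x,\bar p)+\tfrac{\delta}{4}(t-\bar t)$, contradicting $W^-(\bar t,\bar x,\bar p)=\phi(\bar t,\bar x,\bar p)$ as $t\downarrow\bar t$.

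The main technical obstacle is the Ito step: the assumption $\lambda_{\min}(\bar p,\partial_p^2\phi)>0$ is indispensable precisely to force the quadratic-variation and jump contributions of the c\`adl\`ag martingale $\bold p$ to be non-negative \emph{uniformly in $\mathbb{P}\in\mathcal{P}(\bar t,\bar p)$}, thereby tying the $-\lambda_{\min}$-term of (8.4) to the martingale manipulation available to the informed player; additional care is required to handle jumps of $\bold p$ at the stopping time $\tau$, which is where the continuity of the diffusion $X$ (hence $\mathcal{F}_{\tau-}$-measurability of $X_\tau$) plays a crucial role.
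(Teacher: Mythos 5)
Your high-level strategy is the right one -- apply the direct superdynamic programming principle for $W^-$, dispose of the degenerate cases, and run a contradiction argument through It\^o's formula using the convexity of $\phi$ in $p$ to absorb the quadratic-variation and jump contributions of the c\`adl\`ag martingale $\bold p$. This matches the paper's approach, and your observations about modifying $\phi$ to be convex in $p$ and about the role of $\mathcal{F}_{\tau-}$-measurability of $X_\tau$ in converting $\bold p_\tau$ to $\bold p_{\tau-}$ are both on the mark. However, there is a genuine gap at the heart of your Step (ii), and it concerns exactly the phrase ``controlling the exit probability from $N$ by standard hitting-time estimates.''

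Standard hitting-time estimates control the exit of the diffusion $X$ from $B_h(\bar x)$, since the coefficients of $X$ are bounded, but they say nothing about the c\`adl\`ag martingale $\bold p$, whose law the informed player chooses adversarially within $\mathcal{P}(\bar t,\bar p)$. A priori $\bold p$ can jump out of any neighborhood of $\bar p$ instantaneously, in which case your local bounds ``$(\partial_t+\mathcal{L})\phi>\delta$'' and ``$\phi<\langle p,h\rangle-\delta$'' are worthless. This is not a technicality: it is precisely what the paper's Step~1 is for. There one exploits the superDPP with $\sigma=t$ together with the inequality $W^-\le\langle h,p\rangle$ and the \emph{quadratic} gain $W^-\ge\phi+\delta|p-\bar p|^2$ coming from strict convexity to deduce, for the $\epsilon$-optimal $(\mathbb{P}^\epsilon,\tau^\epsilon)$, the bound $\mathbb{E}_{\mathbb{P}^\epsilon}[|\bold p_{\tau^\epsilon-}-\bar p|^2\mid\mathcal{F}_{\bar t-}]\le c(t-\bar t)$; only then does Doob's inequality give exit control on $\bold p$. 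Your proof never produces this estimate, so there is no mechanism to make the It\^o argument local.

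A second, related, issue is your claim of uniformity ``in $\tau$ and $\mathbb{P}$.'' This is not what the paper establishes and is not needed: the paper works with a specific $\epsilon(t-\bar t)$-optimal pair $(\mathbb{P}^\epsilon,\tau^\epsilon)$ from the superDPP, and it is precisely the $\epsilon$-optimality of $\mathbb{P}^\epsilon$ that makes the Step~1 variance estimate available. For an arbitrary $\mathbb{P}\in\mathcal{P}(\bar t,\bar p)$ the estimate is false (take $\bold p$ to jump immediately to $\{e_i\}$), so the uniform statement you assert cannot hold, and your subsequent passage to $\textnormal{essinf}_\mathbb{P}$ is unjustified. Repair the proof by replacing the ``uniform'' claim with the $\epsilon$-optimal-pair argument, and by inserting the paper's Step~1 estimate for $\bold p$ before the It\^o step; the rest of your reasoning (including the reduction at $\bar p=e_i$, which the paper does not make explicit but which is correct since $\mathcal{P}(\bar t,e_i)=\{\delta(e_i)\otimes\mathbb{P}_0\}$) can then be salvaged.
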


\begin{proof}
Let $\phi:[0,T]\times\mathbb{R}^d\times\Delta(I)\rightarrow \mathbb{R}$ be a smooth test function with uniformly bounded derivatives such that $W^--\phi$ has a strict global minimum at $(\bar{t},\bar x,\bar p)\in[0,T)\times\mathbb{R}^d\times\Delta(I)$ with $W^-(\bar{t},\bar x,\bar p)-\phi(\bar{t},\bar x,\bar p)=0$. We have to show
 \begin{equation}
				\begin{array}{l}
	\max\big\{\max\{\min\{(-\frac{\partial } {\partial t}-\mathcal{L})[\phi],\phi-\langle f(t,x),p\rangle\},\\
	\ \\
	 \ \ \ \ \ \ \ \ \ \ \ \ \ \ \  \ \ \ \ \ \ \ \ \ \ \ \ \ \ \ \phi-\langle h(t,x),p\rangle\}, -\lambda_{\min}\left(p,\frac{\partial ^2 \phi}{\partial p^2}\right)\big\}\geq0
		\end{array}
\end{equation}
at $(\bar{t},\bar x,\bar p)$. If
\begin{equation*}
 \lambda_{\min}\left(p,\frac{\partial ^2 \phi}{\partial p^2}\right)\leq0
\end{equation*}
at $(\bar{t},\bar x,\bar p)$ (8.5) obviously holds. So we assume in the subsequent steps strict convexity of $\phi$ in $p$ at $(\bar{t},\bar x,\bar p)$, i.e. there exist $\delta,\eta>0$ such that for all $z\in T_{\Delta(I)(\bar p)}$
\begin{eqnarray}
\langle \frac{\partial^2 \phi}{\partial p^2}(t,x,p)z,z\rangle>4\delta|z|^2\ \ \ \ \ \ \forall (t,x,p)\in B_\eta(\bar t,\bar x,\bar p).
\end{eqnarray}
Since $\phi$ is a test function for a purely local viscosity notion, one can modify it outside a neighborhood of $(\bar t,\bar x,\bar p)$ such that for all $(s, x)\in [\bar t,T]\times\mathbb{R}^d$ the function $\phi(s,x,\cdot)$ is convex on the whole convex domain $\Delta(I)$. Thus for any $p\in\Delta(I)$ we have that
\begin{eqnarray}
W^-(t,x,p)\geq \phi(t,x,p)\geq \phi(t,x,\bar p)+\langle\frac{\partial \phi }{\partial p}(t,x,\bar p),p-\bar p\rangle.
\end{eqnarray}
\ \\
\emph{Step 1: Estimate for $\bold p$.}\\
As in (4.14) we have with (8.6) a stronger estimate, namely there exist $\delta, \eta>0$ such that for all $p\in\Delta(I)$, $t\in[\bar t,\bar t+\eta]$, $x\in B_\eta(\bar x)$
\begin{equation}
W^-(t,x,p)\geq\phi(t,x,\bar p)+\langle \frac{\partial\phi}{\partial p} (t,x,\bar p),p-\bar p\rangle+ \delta |p-\bar p|^2.
\end{equation}

As in the proof of Theorem 4.1. we can set in the dynamic programming for $W^-$ $\sigma=t$ to get
\begin{equation}
\begin{array}{l}
W^- (\bar t,\bar x,\bar p)\\
\geq\textnormal{essinf}_{\mathbb{P}\in\mathcal{P}(\bar t,\bar p)}  \textnormal{essinf}_{\tau\in \bar{\mathcal{T}}(\bar t,t)}\mathbb{E}_{\mathbb{P}}\bigg[\langle \bold p_\tau, h(\tau,X^{\bar t,\bar x}_{\tau} )1_{\tau<t}\rangle+ W^-(t,X^{\bar t,\bar x}_t,\bold p_{t-}) 1_{\tau=t}|\mathcal{F}_{\bar t-}\bigg].
\end{array}
\end{equation}
So for $\epsilon(t-\bar t)$-optimal $\mathbb{P}^\epsilon\in\mathcal{P}^f(t,p)$ and a $\epsilon(t-\bar t)$-optimal stopping time $\tau^\epsilon$ we have
\begin{equation}
\begin{array}{l}
W^- (\bar t,\bar x,\bar p)\\
\ \\
\ \ \ \geq\mathbb{E}_{\mathbb{P}^\epsilon}\bigg[\langle \bold p_{\tau^\epsilon}, h(\tau^\epsilon,X^{\bar t,\bar x}_{\tau^\epsilon} )\rangle 1_{\tau^\epsilon<t}+ W^-(t,X^{\bar t,\bar x}_t,\bold p_{t-}) 1_{\tau^\epsilon=t}|\mathcal{F}_{\bar t-}\bigg]-2\epsilon(t-\bar t)\\
\ \\
\ \ \ = \mathbb{E}_{\mathbb{P}^\epsilon}\bigg[\langle \bold p_{\tau^\epsilon-}, h(\tau^\epsilon,X^{\bar t,\bar x}_{\tau^\epsilon} )\rangle 1_{\tau^\epsilon<t}+ W^-(t,X^{\bar t,\bar x}_t,\bold p_{t-}) 1_{\tau^\epsilon=t}|\mathcal{F}_{\bar t-}\bigg]-2\epsilon(t-\bar t)\\
\ \\
\ \ \ \geq\mathbb{E}_{\mathbb{P}^\epsilon}\bigg[W(\tau^\epsilon,X^{\bar t,\bar x}_{\tau^\epsilon},\bold p_{\tau^\epsilon-})|\mathcal{F}_{\bar t-}\bigg]-2\epsilon(t-\bar t),
\end{array}
\end{equation}
since $\langle p, h(t,x)\rangle\geq W^-(t,x,p)$ for all $(t,x,p)\in[0,T]\times\mathbb{R}^d\times\Delta(I)$. Using (8.7) and (8.8) we get since $W^- (\bar t,\bar x,\bar p)=\phi(\bar t,\bar x,\bar p)$
\begin{equation}
\begin{array}{l}
0\geq\mathbb{E}_{\mathbb{P}^\epsilon}\bigg[\phi(\tau^\epsilon,X^{\bar t,\bar x}_{\tau^\epsilon},\bar p)-\phi(\bar t,\bar x,\bar p)-\langle\frac{\partial \phi}{\partial p}(\tau^\epsilon,X^{\bar t,\bar x}_{\tau^\epsilon},\bar p),\bold p_{\tau^\epsilon-}-\bar p\rangle\\
\ \\
\ \ \ \ \ \ \ \ \ \ \ \ \ \ \ \ \ +\delta 1_{\{|X^{\bar t,\bar x}-x|< \eta\}}|\bold p_{\tau^\epsilon-}-\bar p|\big |\mathcal{F}_{\bar t-}\bigg]-2\epsilon(t-\bar t).
\end{array}
\end{equation}
Now by It\^o's formula and since the derivatives of $\phi$ are uniformly bounded we have that 
\begin{eqnarray}
&&\left|\mathbb{E}_{\mathbb{P}^\epsilon}\left[\phi(\tau_\epsilon,X^{\bar t,\bar x}_{\tau^\epsilon},\bar p)-\phi(\bar t,\bar x,\bar p)\big|\mathcal{F}_{\bar t-}\right]\right|\leq c \mathbb{E}_{\mathbb{P}^\epsilon}[ (\tau^\epsilon-\bar t)\ |\mathcal{F}_{\bar t-}]\leq c(t-\bar t).
\end{eqnarray}
Next, let $f:[\bar t,t]\times\mathbb{R}^n\rightarrow \mathbb{R}^n$ be a smooth bounded function, with bounded derivatives.  Recall that under any ${\mathbb{P}}\in\mathcal{P}^f(\bar t, \bar p)$ the process $\bold p$ is strongly orthogonal to $B$. So since under ${\mathbb{P}^\epsilon}$ the process $\bold p$ is a martingale with $\mathbb{E}_{\mathbb{P}^\epsilon}\left[\bold p_{\tau^\epsilon-}|\mathcal{F}_{\bar t-}\right]=\bar p$, we have by It\^o's formula that 
\begin{equation*}
\mathbb{E}_{\mathbb{P}^\epsilon}\left[f_i(\tau^\epsilon,X^{\bar t,\bar x}_{\tau^\epsilon})(\bold p_{\tau^\epsilon-}-\bar p)_i\big|\mathcal{F}_{\bar t-}\right]=\mathbb{E}_{\mathbb{P}^\epsilon}\bigg[\int_{\bar t}^{\tau^\epsilon} \left((\frac{\partial}{\partial t}+\mathcal{L})f_i(s,X^{\bar t,\bar x}_{s})\right)( \bold p_s-\bar p)_i ds\big|\mathcal{F}_{\bar t-}\bigg].
\end{equation*}
Hence by the assumption on the coefficients of the diffusion (A)(i)
\begin{eqnarray}
&&\left|\mathbb{E}_{\mathbb{P}^\epsilon}\left[\langle\frac{\partial \phi }{\partial p}({\tau^\epsilon},X^{\bar t,\bar x}_{\tau^\epsilon},\bar p),\bold p_{{\tau^\epsilon-}}-\bar p\rangle\big|\mathcal{F}_{\bar t-}\right]\right|\leq c \mathbb{E}_{\mathbb{P}^\epsilon}[ (\tau^\epsilon-\bar t)\ |\mathcal{F}_{\bar t-}]\leq c(t-\bar t).
\end{eqnarray}
Furthermore observe that, since $|\bold p_{{\tau^\epsilon-}}-\bar p|\leq1$, we have, that for $\epsilon'>0$ by Young and H\"older inequality 
\begin{equation*}
\begin{array}{l}
\mathbb{E}_{\mathbb{P}^\epsilon}\left[1_{\{|X^{\bar t,\bar x}_{{\tau^\epsilon}}-\bar x|<\eta\}}|\bold p_{{\tau^\epsilon-}}-\bar p|^2\big|\mathcal{F}_{\bar t-}\right]\\
\ \\
\ \ \ \geq \mathbb{E}_{\mathbb{P}^\epsilon}\left[|\bold p_{\tau^\epsilon-}-\bar p|^2\big|\mathcal{F}_{\bar t-}\right]-\frac{1}{\eta} \mathbb{E}_{\mathbb{P}^\epsilon}\left[|X^{\bar t,\bar x}_{\tau^\epsilon}-\bar x| |\bold p_{\tau^\epsilon-}-\bar p|^2\big|\mathcal{F}_{\bar t-}\right]\\
\ \\
\ \ \ \geq (1- \frac{\epsilon' }{\eta} )\mathbb{E}_{\mathbb{P}^\epsilon}\left[|\bold p_{\tau^\epsilon-}-\bar p|^2\big|\mathcal{F}_{\bar t-}\right]-\frac{1}{4\eta\epsilon'} \mathbb{E}_{\mathbb{P}^\epsilon}\left[|X^{\bar t,\bar x}_{\tau^\epsilon}-\bar x|^2\big|\mathcal{F}_{\bar t-}\right]\\
\ \\
\ \ \ \geq (1- \frac{\epsilon' }{\eta} )\mathbb{E}_{\mathbb{P}^\epsilon}\left[|\bold p_{\tau^\epsilon-}-\bar p|^2\big|\mathcal{F}_{\bar t-}\right]-\frac{c}{4\eta\epsilon'}  \mathbb{E}_{\mathbb{P}^\epsilon}[ (\tau^\epsilon-\bar t)\ |\mathcal{F}_{\bar t-}],
\end{array}
\end{equation*}
hence 
\begin{equation}
\begin{array}{l}
\mathbb{E}_{\mathbb{P}^\epsilon}\left[1_{\{|X^{\bar t,\bar x}_{\tau^\epsilon}-\bar x|<\eta\}}|\bold p_{\tau^\epsilon-}-\bar p|^2\big|\mathcal{F}_{\bar t-}\right]\\
\ \\
\ \ \ \geq(1- \frac{\epsilon' }{\eta} )\mathbb{E}_{\mathbb{P}^\epsilon}\left[|\bold p_{\tau^\epsilon-}-\bar p|^2\big|\mathcal{F}_{\bar t-}\right]-\frac{c}{4\eta\epsilon'} (t-\bar t).
\end{array}
\end{equation}
Choosing $0<\epsilon'<\eta$ and combining (8.11) with the estimates (8.12)-(8.14) there exists a constant $c$, such that
\begin{equation}
\mathbb{E}_{\mathbb{P}^\epsilon}\left[|\bold p_{\tau^\epsilon-}-\bar p|^2\big|\mathcal{F}_{\bar t-}\right]\leq c (t-\bar t).
\end{equation}
This implies in particular for $h>0$ by Doob's inequality
\begin{equation}
\mathbb{P}^{\epsilon}\left[\sup_{s\in[\bar t,\tau^\epsilon[}|\bold p_{s}-\bar p|>h\right]\leq c \frac{ \mathbb{E}_{\mathbb{P}^\epsilon}\left[|\bold p_{\tau^\epsilon-}-\bar p|^2\right]}{h^2}\leq c\frac{ (t-\bar t)}{h^2}.
\end{equation}
\ \\
\emph{Step 2: Viscosity supersolution property}\\
To show the viscosity supersolution property we have to show that 
\begin{equation*}
W^-(\bar t,\bar x,\bar p) -\langle h(\bar t,\bar x),\bar p\rangle=\phi(\bar t,\bar x,\bar p) -\langle h(\bar t,\bar x),\bar p\rangle<0
\end{equation*}
implies
\begin{equation*}
(\frac{\partial \phi} {\partial t}+\mathcal{L})[\phi](\bar{t},\bar x,\bar p)\leq0.
\end{equation*}
We will argue by contradiction. Assume that
\begin{equation}
\phi(\bar t,\bar x,\bar p) -\langle h(\bar t,\bar x),\bar p\rangle<0 \ \ \ \ \ \ \ \ \ \ \ \ \ \textnormal{and}\ \ \ \ \ \ \ \ \ \ \ \  (\frac{\partial \phi} {\partial t}+\mathcal{L})[\phi](\bar{t},\bar x,\bar p)>0.
\end{equation}
Then there exist $h,\delta>0$ such that for all $(s,x,p)\in[\bar t,\bar t+h]\times B(\bar x,\bar p)$
\begin{equation}
\begin{array}{l}
\langle h(s,x),\bar p\rangle-\phi(s,x,\bar p) \geq \delta\ \ \ \ \ \ \ \ \ \ \ \ \ \textnormal{and}\ \ \ \ \ \ \ \ \ \ \ \ 
(\frac{\partial \phi} {\partial t}+\mathcal{L})[\phi](s,x,\bar p)\geq \delta.
\end{array}
\end{equation}
By the It\^o formula we have, that 
\begin{equation}
\begin{array}{rcl}
\phi(\tau^\epsilon,X^{\bar t,\bar x}_{\tau^\epsilon},\bold p_{\tau^\epsilon})\geq\phi(\bar t,\bar x,\bar p)+\int_{\bar t}^{\tau^\epsilon} (\frac{\partial}{\partial t}+\mathcal{L})[\phi] (s,X^{\bar t,\bar x}_s,\bold p_s) ds,
\end{array}
\end{equation}
where we used the fact that by the convexity of $\phi$ we have ${\mathbb{P}^\epsilon}$-a.s., that
\begin{eqnarray*}
\sum_{ \bar t\leq r< {\tau^\epsilon}} \left(\phi(r,X^{\bar t,\bar x}_r,\bold p_{r})-\phi(r,X^{\bar t,\bar x}_r,\bold p_{r-})-\langle \frac{\partial}{\partial p}\phi(r,X^{\bar t,\bar x}_r,\bold p_{r-}),\bold p_{r}-\bold p_{r-}\rangle\right) \geq0.
\end{eqnarray*}
Define $A:=\{\inf_{s\in [\bar t, t]} |\bold p_{s-}-\bar p |>h\}$ and $B:=\{\inf_{s\in[\bar t,t]} |X^{\bar t,\bar x}_t-\bar x |>h\}$.\\
Note that by (8.16) and since $\mathbb{P}^\epsilon[B]\leq \frac{c(t-\bar t)^2}{h^4}$ we have that 
\begin{equation}
\begin{array}{rcl}
\mathbb{E}_{\mathbb{P}^\epsilon}[1_A1_B]&\leq& c(\mathbb{E}_{\mathbb{P}^\epsilon}[1_A])^\frac{1}{2}(\mathbb{E}_{\mathbb{P}^\epsilon}[1_B])^\frac{1}{2}\\
\ \\
& \leq& c(\frac{ (t-\bar t)}{h^2})^\frac{1}{2}(\frac{(t-\bar t)^2}{h^4})^\frac{1}{2}=c \frac{(t-\bar t)^\frac{3}{2}}{h^3}.
\end{array}
\end{equation}
Now we can continue as in the proof of Theorem 5.1. By using (8.19) we get
\begin{equation*}
\begin{array}{rcl}
\phi(\bar t,\bar x,\bar p)&\leq&\mathbb{E}_{\mathbb{P}^\epsilon}\left[1_{A^c}1_{B^c}\left(\phi(\tau^\epsilon,X^{\bar t,\bar x}_{\tau^\epsilon},\bold p_{\tau^\epsilon-})-\int_{\bar t}^{\tau^\epsilon} (\frac{\partial}{\partial t}+\mathcal{L})[\phi] (s,X^{\bar t,\bar x}_s,\bold p_s) ds\right)\right]\\
\ \\
&&\ \ \ \ \ \ \ \ \ \ \ \ +c \frac{(t-\bar t)^\frac{3}{2}}{h^3}\\
\ \\
&\leq&\mathbb{E}_{\mathbb{P}^\epsilon}\left[ \langle h(\tau^\epsilon,X^{\bar t,\bar x}_{\tau^\epsilon}),\bold p_{\tau^\epsilon}\rangle 1_{\tau^\epsilon<t}+  \phi(t ,X^{\bar t,\bar x}_{t},\bold p_{t-})1_{\tau^\epsilon=t}\right]\\
\ \\
&&\ \ \ \ \ \ \ \ \ \ \ \ -\delta \mathbb{E}_{\mathbb{P}^\epsilon}\left[ 1_{\tau^\epsilon<t}\right]-\delta \mathbb{E}_{\mathbb{P}^\epsilon}\left[(\tau^\epsilon-\bar t)\right]+2c \frac{(t-\bar t)^\frac{3}{2}}{h^3}.
\end{array}
\end{equation*}
As in (5.7) we have that for $1\geq(t-\bar t)$
\begin{equation}
\begin{array}{l}
(t-\bar t)\leq \mathbb{E}_{\mathbb{P}^\epsilon}\left[ 1_{\tau^\epsilon<t}\right]+\mathbb{E}_{\mathbb{P}^\epsilon}\left[(\tau^\epsilon-\bar t)\big|\mathcal{F}_{\bar t-}\right]
\end{array}
\end{equation}
so
\begin{equation*}
\begin{array}{rcl}
\phi(\bar t,\bar x,\bar p)&\leq&\mathbb{E}_{\mathbb{P}^\epsilon}\left[ \langle h(\tau^\epsilon,X^{\bar t,\bar x}_{\tau^\epsilon}),\bold p_{\tau^\epsilon}\rangle 1_{\tau^\epsilon<t}+  \phi(t ,X^{\bar t,\bar x}_{t},\bold p_{t-})1_{\tau^\epsilon=t}\right]\\
\ \\
&&\ \ \ \ \ -\delta(t-\bar t)+2c \frac{(t-\bar t)^\frac{3}{2}}{h^3},
\end{array}
\end{equation*}
which gives with (8.19)
\begin{equation*}
-\delta(t-\bar t)+2c \frac{(t-\bar t)^\frac{3}{2}}{h^3}+2\epsilon(t-\bar t)\geq0.
\end{equation*}
Dividing by $(t-\bar t)$ we have
\begin{equation}
-\delta+2c \frac{(t-\bar t)^\frac{1}{2}}{h^3}+2\epsilon\geq0.
\end{equation}
However (8.22) contradicts $\delta>0$, since $\epsilon$ and $t-\bar t$ can be chosen arbitrarily small.
\end{proof}

The proof of Theorem 6.4 is now straightforward using the subsolution property of $W^+$, the supersolution property of $W^-$ and the comparison result of Theorem 3.7.


\section{Appendix: Comparison}

In this section we provide the proof of the comparison result Theorem 3.7. for the fully non linear variational PDE (3.5)
\begin{equation*}
				\begin{array}{l}
	\max \bigg\{ \max\{\min\{(-\frac{\partial} {\partial t}-\mathcal{L})[w],w-\langle f(t,x),p\rangle\},\\
	 \ \ \ \ \ \ \ \ \ \ \ \ \ \ \ \ \ \ \ \ \ \ \ \ \ \ \ \ \ \ \ \ \ \ \ \ \ \ \ \ \ \ \ \ \ w-\langle h(t,x),p\rangle\},-\lambda_{\min}\left(p,\frac{\partial ^2 w}{\partial p^2}\right)\bigg\}=0
		\end{array}
			\end{equation*}
with terminal condition $w(T,x,p)=\sum_{i=1,\ldots,I}p_ig_i(x)$. The proof is more or less a straight forward adaption of the results in \cite{Ca}.

\subsection{Reduction to the faces}

Let $\tilde I\subset\{1,\ldots, I\}$ and we define the set $\Delta(\tilde I)$ by
\begin{equation}
 \Delta(\tilde I)=\{p\in\Delta(I): p_i=0 \textnormal{ if } i\not\in\tilde I\}.
\end{equation}

Note that by Definition 3.3. the supersolution property is obviously preserved under restriction. We just state
\begin{prop}
Let $w:[0,T] \times \mathbb{R}^d\times \Delta(I)\rightarrow\mathbb{R}$  be a bounded, continuous viscosity supersolution to (3.5), which is uniformly Lipschitz continuous in $p$. Then the restriction of $w$ to  $\Delta(\tilde I)$ is a supersolution to (3.5) on $[0,T] \times \mathbb{R}^d\times \Delta(\tilde I)$.
\end{prop}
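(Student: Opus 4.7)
The strategy is to lift any admissible test function $\phi$ on the restricted domain $[0,T]\times\mathbb{R}^d\times\Delta(\tilde I)$ to a test function $\tilde\phi$ on the full domain $[0,T]\times\mathbb{R}^d\times\Delta(I)$ and then invoke the supersolution property of $w$ on $\Delta(I)$. Suppose $w-\phi$ has a strict minimum $0$ at $(\bar t,\bar x,\bar p)\in [0,T)\times\mathbb{R}^d\times\Delta(\tilde I)$ with $\lambda_{\min}(\bar p,\partial^2\phi/\partial p^2)>0$, interpreted over $T_{\Delta(\tilde I)}(\bar p)$; the task is to build $\tilde\phi$ matching $\phi$ and its $(t,x)$-derivatives at $(\bar t,\bar x,\bar p)$, preserving the strict minimum of $w-\tilde\phi$, and satisfying $\lambda_{\min}(\bar p,\partial^2\tilde\phi/\partial p^2)>0$ on the larger cone $T_{\Delta(I)}(\bar p)$.

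I would let $\hat\phi$ be any smooth extension of $\phi$ to a neighborhood of $\bar p$ in $\Delta(I)$ (for instance via the projection rescaling the $\tilde I$-coordinates), set $\sigma(p):=\sum_{i\not\in\tilde I} p_i$, and define
\[
\tilde\phi(t,x,p) := \hat\phi(t,x,p) - K\sigma(p) + C\sigma(p)^2,
\]
with constants $K,C>0$ to be chosen. Since $\sigma\equiv 0$ on $\Delta(\tilde I)$, the function $\tilde\phi$ coincides with $\phi$ on the face, and its time and space derivatives at $(\bar t,\bar x,\bar p)$ agree with those of $\phi$, so $\mathcal{L}[\tilde\phi](\bar t,\bar x,\bar p)=\mathcal{L}[\phi](\bar t,\bar x,\bar p)$. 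Using the uniform Lipschitz continuity of $w$ in $p$ together with smoothness of $\hat\phi$, one gets $w(t,x,p)-\hat\phi(t,x,p)\geq -L\sigma(p)$ near the base point, hence $w-\tilde\phi\geq (K-L)\sigma-C\sigma^2$ off $\Delta(\tilde I)$; taking $K>L$ makes this positive for small $\sigma>0$, so $(\bar t,\bar x,\bar p)$ remains a strict local minimum of $w-\tilde\phi$ in $\Delta(I)$.

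The main difficulty is verifying $\lambda_{\min}(\bar p,\partial^2\tilde\phi/\partial p^2)>0$ on the enlarged cone $T_{\Delta(I)}(\bar p)$. Writing $v:=\sum_{i\not\in\tilde I} e_i$, the added term contributes $2C\,vv^{T}$ to the Hessian at $\bar p$. Decompose $z\in T_{\Delta(I)}(\bar p)\setminus\{0\}$ orthogonally as $z=z^{\|}+z^{\perp}$, where $z^{\|}\in T_{\Delta(\tilde I)}(\bar p)$ is supported on $\tilde I$ and $z^{\perp}$ carries the outer components $z_i$ ($i\not\in\tilde I$) together with a uniform compensation $-\alpha/|\tilde I|$ on $\tilde I$, with $\alpha:=v\cdot z=\sum_{i\not\in\tilde I} z_i\geq 0$. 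The non-negativity of the outer components yields $\alpha^2\geq\sum_{i\not\in\tilde I} z_i^2$, hence $\alpha^2\geq c|z^{\perp}|^2$ for some $c>0$ depending only on $|\tilde I|$. Combining this with the assumed strict positivity on $T_{\Delta(\tilde I)}(\bar p)$ for $z^{\|}$, the boundedness of $\partial^2\hat\phi/\partial p^2$, and a Young-type splitting of the cross term, I expect to arrive at
\[
\Bigl\langle \tfrac{\partial^2\tilde\phi}{\partial p^2}z,z\Bigr\rangle\;\geq\;\delta'|z^{\|}|^2\;+\;(2Cc-M)|z^{\perp}|^2,
\]
which is strictly positive once $C$ is chosen larger than $M/(2c)$. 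This eigenvalue estimate is the main technical step; everything else is bookkeeping.

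Finally, applying the supersolution property of $w$ on $[0,T]\times\mathbb{R}^d\times\Delta(I)$ with the test function $\tilde\phi$ yields $w(\bar t,\bar x,\bar p)\geq\langle f(\bar t,\bar x),\bar p\rangle$ and, whenever $w(\bar t,\bar x,\bar p)<\langle h(\bar t,\bar x),\bar p\rangle$, $(\partial_t+\mathcal{L})[\tilde\phi](\bar t,\bar x,\bar p)\leq 0$. By the coincidence of $(t,x)$-derivatives between $\tilde\phi$ and $\phi$, this is precisely the supersolution condition for $w|_{\Delta(\tilde I)}$ at $(\bar t,\bar x,\bar p)$, completing the argument.
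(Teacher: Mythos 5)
Your proof is correct. The paper itself gives no argument here --- it simply remarks that the supersolution property is ``obviously preserved under restriction'' and moves on, devoting a full proof only to the subsolution case --- so your construction fills in exactly the details the paper leaves implicit.

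It is worth noting \emph{why} the paper treats the two cases asymmetrically. The subsolution definition (Definition 3.5) only tests at points $\bar p\in\textnormal{Int}(\Delta(I))$, so a point $\tilde p\in\textnormal{Int}(\Delta(\tilde I))\subset\partial\Delta(I)$ is not directly a valid test point; this forces the log-barrier $-\epsilon\sum_{j\notin\tilde I}\ln(p_j(1-p_j))$ in the paper's Proposition 9.2 to push the max into the interior. The supersolution definition (Definition 3.6), by contrast, tests at \emph{all} $\bar p\in\Delta(I)$, so no interior barrier is required --- that is the sense in which the restriction is ``easier.'' Even so, a face test function $\phi$ with $\lambda_{\min}>0$ only over the smaller cone $T_{\Delta(\tilde I)}(\bar p)$ must still be lifted to a $\tilde\phi$ on $\Delta(I)$ with a strict minimum and $\lambda_{\min}>0$ over the \emph{larger} cone $T_{\Delta(I)}(\bar p)$, which is genuine work. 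Your construction $\tilde\phi=\hat\phi - K\sigma + C\sigma^2$ with $\sigma(p)=\sum_{i\notin\tilde I}p_i$ handles both issues cleanly: the term $-K\sigma$ with $K$ larger than the $p$-Lipschitz constant of $w$ keeps $w-\tilde\phi$ nonnegative off the face, and $C\sigma^2$ contributes the rank-one term $2Cvv^{T}$ to the $p$-Hessian, which together with the key inequality $\alpha^2=\bigl(\sum_{i\notin\tilde I}z_i\bigr)^2\geq\sum_{i\notin\tilde I}z_i^2$ (valid precisely because the tangent cone at the face forces $z_i\geq 0$ for $i\notin\tilde I$) yields $\alpha^2\geq c|z^\perp|^2$ and dominates the cross and perpendicular terms once $C$ is large. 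The orthogonality of $z^\|$ and $z^\perp$, the Young-splitting, and the conclusion by coincidence of $(t,x)$-derivatives are all correct. This closely mirrors the penalization $2(k+1)\langle\mu,p\rangle$ used in the paper's subsolution proof, with the additional quadratic term needed because here the strict convexity over the bigger cone must be \emph{produced} rather than merely verified in the limit.
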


The subsolution property is however not immediate, since $\textnormal{Int}( \Delta(\tilde I))\not\subseteq\textnormal{Int}( \Delta(I))$.

\begin{prop}
Let $w:[0,T] \times \mathbb{R}^d\times \Delta(I)\rightarrow\mathbb{R}$ be a bounded, continuous viscosity subsolution to (3.5), which is uniformly Lipschitz continuous in $p$. Then the restriction of $w$ to  $\Delta(\tilde I)$ is a subsolution to (3.5) on $[0,T] \times \mathbb{R}^d\times \Delta(\tilde I)$.
\end{prop}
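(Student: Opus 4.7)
The plan is to check the three equivalent conditions (i)--(iii) of Definition~3.3 on the face $\Delta(\tilde I)$, that is, at test points $(\bar t,\bar x,\bar p)\in[0,T)\times\mathbb{R}^d\times\mathrm{Int}(\Delta(\tilde I))$ (the interior taken relative to the affine hull of $\Delta(\tilde I)$), with tangent cone $T_{\Delta(\tilde I)(\bar p)} = \{z\in\mathbb{R}^I:\sum_i z_i = 0,\ z_i=0\text{ for }i\notin\tilde I\}$, which is a linear subspace of the ambient cone $T_{\Delta(I)(\bar p)}$. Condition (ii) restricts from $\Delta(I)$ to $\Delta(\tilde I)$ trivially by continuity of $w$ and $h$. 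Condition (i) follows from the viscosity convexity of $w$ in $p$ on $\Delta(I)$ (which the subsolution property encodes): for any $\phi$ touching $w|_{\Delta(\tilde I)}$ from above at $(\bar t,\bar x,\bar p)$ and any $z\in T_{\Delta(\tilde I)(\bar p)}$, the one-dimensional map $s\mapsto\phi(\bar t,\bar x,\bar p+sz)-w(\bar t,\bar x,\bar p+sz)$ has a local minimum at $s=0$, and convexity of $w$ along this segment yields $\langle\partial_p^2\phi(\bar t,\bar x,\bar p)\,z,z\rangle\geq 0$.

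The substantive step is condition (iii). Since the operator $\frac{\partial}{\partial t}+\mathcal{L}$ involves only $(t,x)$-derivatives, freezing $p=\bar p$ in a test function $\phi(t,x,p)$ on $\Delta(\tilde I)$ with $w-\phi$ attaining a strict global maximum at $(\bar t,\bar x,\bar p)$ produces a classical two-variable test function $\psi(t,x):=\phi(t,x,\bar p)$ for $w(\cdot,\cdot,\bar p)$ at $(\bar t,\bar x)$, with $(\partial_t+\mathcal{L})\psi(\bar t,\bar x)=(\partial_t+\mathcal{L})\phi(\bar t,\bar x,\bar p)$. Thus (iii) on $\Delta(\tilde I)$ reduces to showing that, whenever $w(\bar t,\bar x,\bar p)>\langle f(\bar t,\bar x),\bar p\rangle$, the slice $w(\cdot,\cdot,\bar p)$ is a classical viscosity subsolution of $(-\partial_t-\mathcal{L})u=0$ at $(\bar t,\bar x)$.

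For any interior $p\in\mathrm{Int}(\Delta(I))$ this classical $(t,x)$-subsolution property is a direct consequence of the subsolution property on $\Delta(I)$: given a classical test function $\psi$ for $w(\cdot,\cdot,p)$ at $(\bar t,\bar x)$, the three-variable perturbation $\phi_C(t,x,q):=\psi(t,x)+C|q-p|^2$ is a valid test function on $\Delta(I)$ (its $p$-Hessian is $2C$ times the identity on the tangent space, satisfying~(i); and $(\partial_t+\mathcal{L})\phi_C$ coincides with $(\partial_t+\mathcal{L})\psi$), and for $C$ large enough the maximizer of $w-\phi_C$ lies in $\mathrm{Int}(\Delta(I))$ and converges to $(\bar t,\bar x,p)$ as $C\to\infty$. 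To transfer this property to the boundary point $\bar p\in\mathrm{Int}(\Delta(\tilde I))$, pick $p_n\in\mathrm{Int}(\Delta(I))$ with $p_n\to\bar p$; by continuity of $w$ and $f$ the strict inequality $w(\bar t,\bar x,\bar p)>\langle f(\bar t,\bar x),\bar p\rangle$ extends to an open neighborhood $U$ of $(\bar t,\bar x)$ on which $w(\cdot,\cdot,p_n)>\langle f(\cdot,\cdot),p_n\rangle$ for all large $n$, so each $w(\cdot,\cdot,p_n)$ is a viscosity subsolution of $(-\partial_t-\mathcal{L})u=0$ on $U$. Uniform Lipschitz continuity of $w$ in $p$ gives $w(\cdot,\cdot,p_n)\to w(\cdot,\cdot,\bar p)$ uniformly on compact subsets of $U$, and the standard stability of viscosity subsolutions under uniform convergence transfers the property to $w(\cdot,\cdot,\bar p)$. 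Testing the latter with $\psi$ yields $(\partial_t+\mathcal{L})\psi(\bar t,\bar x)\geq 0$, which is condition (iii) on $\Delta(\tilde I)$.

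The main obstacle I expect is the rigorous justification of the interior perturbation step: one must verify that the maximizers of $w-\phi_C$ actually lie in $\mathrm{Int}(\Delta(I))$ and converge to $(\bar t,\bar x,p)$, which uses boundedness and uniform Lipschitz continuity of $w$ in $p$ to dominate the quadratic penalty, together with a standard compactness argument on the maximizing set.
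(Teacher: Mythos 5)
Your proposal is correct, but it takes a genuinely different route from the paper's proof. The paper handles all three conditions of the subsolution property simultaneously via a single penalized test function on the full simplex: starting from a test function $\phi$ for the restriction $w|_{\Delta(\tilde I)}$ at a face-interior point $\tilde p$, it forms
\[
\phi_\epsilon(t,x,p)=\phi\bigl(t,x,\Pi(p)\bigr)+2(k+1)\langle\mu,p\rangle-\epsilon\,\sigma(p),
\]
where $\Pi$ is the affine projection onto $\Delta(\tilde I)$, the linear term $2(k+1)\langle\mu,p\rangle$ (with $k$ the Lipschitz constant of $w$ in $p$) dominates the drop of $w$ off the face, and the logarithmic barrier $-\epsilon\,\sigma(p)$ forces the maximizer of $w-\phi_\epsilon$ into $\operatorname{Int}(\Delta(I))$. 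The subsolution inequality at that interior maximizer then passes to the limit $\epsilon\downarrow 0$ and yields both the $\lambda_{\min}$ condition and the variational PDE condition on the face. Your proof instead decomposes the subsolution property into the three conditions (i)--(iii) and treats them with separate tools: (i) via the fact that viscosity convexity (i.e.\ $\lambda_{\min}(p,\partial_p^2\phi)\geq 0$ at every interior touching point) together with continuity implies genuine convexity of $w$ in $p$; (ii) trivially by continuity; and (iii) via a two-stage approximation, first using the quadratic penalty $C|q-p|^2$ to establish that each interior slice $w(\cdot,\cdot,p)$ is a subsolution of $(-\partial_t-\mathcal{L})u=0$ off the lower obstacle, then invoking stability of viscosity subsolutions under locally uniform convergence and the uniform Lipschitz continuity in $p$ to transfer this to the boundary point. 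Both arguments are valid and both rely on the uniform Lipschitz continuity of $w$ in $p$. The trade-off is that the paper's one-shot barrier construction is self-contained, whereas your route is more modular but imports two auxiliary results that deserve explicit justification or a citation: (a) the equivalence of viscosity convexity and ordinary convexity for continuous functions, which is a genuine theorem (not a triviality) and is the real content behind your step (i); and (b) the stability of viscosity subsolutions under uniform convergence, which is standard but should be named. If you intend your proof to stand alone within the paper, you should state these as lemmas; with that caveat, the argument closes the gap.
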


\begin{proof}
Set $\tilde w=w\big |_{\Delta(\tilde I)}$. Let  $(\bar t,\bar x, \bar p)\in(0,T)\times\mathbb{R}^d\times\textnormal{Int}(\Delta(\tilde I))$ and $\phi: [0,T] \times \mathbb{R}^d\times \Delta(\tilde I)\rightarrow \mathbb{R}$ a test function such that $\tilde w- \phi$ has a strict minimum at $(\bar t,\bar x,\tilde p)$ with $\tilde w(\bar t,\bar x,\tilde p)- \phi(\bar t,\bar x,\tilde p)=0$. By using the viscosity subsolution property of $w$ on $[0,T] \times \mathbb{R}^d\times \Delta(I)$ we have to show:
\begin{itemize}
\item [(i)] $\lambda_{\min}\left(\tilde p,\frac{\partial ^2 \phi}{\partial p^2}\right)\geq 0$
\item[(ii)] \begin{equation*}
				\begin{array}{l}
	\max\{\min\{(-\frac{\partial } {\partial t}-\mathcal{L})[\phi],\phi-\langle f(t,x),p\rangle\},\phi-\langle h(t,x),p\rangle\}\leq0
		\end{array}
			\end{equation*}
			at $(\bar t,\bar x, \tilde p)$.
\end{itemize}
 However $\tilde p\not \in \textnormal{Int}( \Delta(I))$ so we have to use an appropriate approximation.  Let $\mu\in\mathbb{R}^I$ such that $\mu_i=0$ if $i\in\tilde I$ and $\mu_i=1$ else. Furthermore we define for $p\in\Delta(I)$ the projection $\Pi$ onto $\Delta(\tilde I)$ by
\begin{equation*}
\Pi(p)_i= 
\begin{cases} p_i+\left(\sum_{j\not\in\tilde I} p_i\right)/|\tilde I| & \text{if $i\in\tilde{I}$,}
\\
0 &\text{else.}
\end{cases}
\end{equation*}
\\
Since $w$ is uniformly Lipschitz continuous with Lipschitz constant $k$ with respect to $p$, we have 
\begin{equation*}
w(t,x,p)\leq \tilde w(t,x,\Pi(p))+ (k+1) |\Pi(p)-p|
\end{equation*}
with an equality for $p\in\Delta(\tilde I)$, hence
\begin{equation*}
w(t,x,p)\leq \phi (t,x,\Pi(p))+ 2(k+1)\langle \mu,p\rangle 
\end{equation*}
with an equality only at $(\bar t, \bar x,\tilde p)$, where we used
\begin{equation*}
 |\Pi(p)-p|\leq\sum_{j\in\tilde I} |\Pi(p)_j-p_j|+\sum_{j\not\in\tilde I} p_j=(1+1/|\tilde I|)\sum_{j\not\in\tilde I} p_j=2\langle \mu,p\rangle.
\end{equation*}
For $\epsilon>0$ small we now consider 
\begin{equation}
\max_{(t,x,p)\in[0,T]\times\mathbb{R}^d\times\Delta (I)} w(t,x,p)-\phi_\epsilon(t,x,p)
\end{equation}
 with
 \[\phi_\epsilon(t,x,p)=\phi(t,x,\Pi (p))+ 2(k+1)\langle \mu,p\rangle -\epsilon \sigma (p)\]
and  $\sigma(p)=\sum_{j\not \in\tilde I} \ln (p_i(1-p_i))$. For $\epsilon$ sufficiently small this problem has a maximum $(t_\epsilon,x_\epsilon,p_\epsilon)$ which converges to $(\bar t,\bar x,\tilde p)$ as $\epsilon\downarrow 0$. By the definition of $\sigma$ and the fact that $\tilde p\in\text{Int}(\Delta(\tilde I))$ we have that $p_\epsilon\in\text{Int}(\Delta( I))$. Hence by the subsolution property of $w$ we have, that $\lambda_{\min}\left(p_\epsilon,\frac{\partial ^2 \phi_\epsilon}{\partial p^2}\right)(t_\epsilon, x_\epsilon,p_\epsilon)\geq 0$.
Note that since $\Pi$ is affine, $\Pi|_{\Delta(\tilde I)}=\text{id}$ and $\sigma$ does not depend on $p_i$ for $i\in\tilde I$, we have
\begin{equation*}
\begin{array}{l}
\liminf_{\epsilon\downarrow 0} \lambda_{\min}\left(p_\epsilon,\frac{\partial ^2 \phi_\epsilon}{\partial p^2}\right)(t_\epsilon, x_\epsilon,p_\epsilon)\\
\ \\
\ \ \ \leq \liminf_{\epsilon\downarrow 0} \min_{z\in T_{\Delta(\tilde I)(\tilde p)}\setminus\{0\}} \frac{\langle {\partial ^2 \phi_\epsilon(t_\epsilon, x_\epsilon,p_\epsilon)}z,z\rangle}{|z|^2}\\
\ \\
\ \ \ \leq \liminf_{\epsilon\downarrow 0} \min_{z\in T_{\Delta(\tilde I)(\tilde p)}\setminus\{0\}} \frac{\langle {\partial ^2 \phi_\epsilon(t_\epsilon, x_\epsilon,\Pi(p_\epsilon))}z,z\rangle}{|z|^2}
=\lambda_{\min}\left(\tilde p,\frac{\partial ^2 \phi}{\partial p^2}\right)(\bar t,\bar  x,\tilde p).
\end{array}
\end{equation*}
And since $\lambda_{\min}\left(p_\epsilon,\frac{\partial ^2 \phi_\epsilon}{\partial p^2}\right)(t_\epsilon, x_\epsilon,p_\epsilon)\geq 0$, we have
\begin{equation}
\lambda_{\min}\left(\tilde p,\frac{\partial ^2 \phi}{\partial p^2}\right)(\bar t,\bar  x,\tilde p)\geq 0.
\end{equation}
(ii) follows then by the subsolution property of $w$, i.e. 
\begin{equation}
\begin{array}{l}
		\max\big\{\min\big \{(-\frac{\partial } {\partial t}-\mathcal{L})[\phi_\epsilon](t_\epsilon, x_\epsilon,p_\epsilon)),\\
		\ \\
		\ \ \ \ \ \ \ \ \ \ \ \ \ \ \phi(t_\epsilon, x_\epsilon,p_\epsilon)-\langle f(t_\epsilon, x_\epsilon), p_\epsilon \rangle\big\}
		,\phi(t_\epsilon, x_\epsilon,p_\epsilon)-\langle h(t_\epsilon, x_\epsilon), p_\epsilon \rangle\big\} \leq 0
\end{array}
\end{equation}
by letting $\epsilon\downarrow0$.
\end{proof}

\subsection{Proof of Theorem 3.7}

Let $w_1:[0,T] \times \mathbb{R}^d\times \Delta(I)\rightarrow\mathbb{R}$ be a bounded, continuous viscosity subsolution to (3.5), which is uniformly Lipschitz continuous in $p$, and $w_2:[0,T] \times \mathbb{R}^d\times \Delta(I)\rightarrow\mathbb{R}$ be a bounded, continuous  viscosity supersolution to (3.5), which is uniformly Lipschitz continuous in $p$. Assume that 
\begin{equation}
w_1(T,x,p)\leq w_2(T,x,p)
\end{equation}
for all $x\in\mathbb{R}^d,p\in\Delta(I)$. We want to show that
\begin{equation}
w_1(t,x,p)\leq w_2(t,x,p)
\end{equation}
for all $(t,x,p)\in[0,T]\times\mathbb{R}^d\times\Delta(I)$. As in \cite{Ca} we prove (9.6) by induction over $I$. Indeed if $I=1$, (3.5) reduces to
\begin{equation}
\begin{array}{l}
		\max\big\{\min\big \{(-\frac{\partial } {\partial t}-\mathcal{L})[w],w- f_1(t, x)\big\},w- h_1(t, x)\big\}=0,
\end{array}
\end{equation}
where comparison is a classical result, see e.g. \cite{HH}.
Assume that Theorem 3.7. holds for $I\in\mathbb{N}^*$. That means for $w_1,w_2: [0,T]\times\mathbb{R}^d\times\Delta(I+1)$ we have by Proposition 9.1. and 9.2. that
\begin{equation}
	w_1(t,x,p)\leq w_2(t,x,p) \ \ \ \ \ \forall (t,x,p)\in[0,T]\times\mathbb{R}^d\times\partial({\Delta(I)}).
\end{equation}
We will show (9.6) by contradiction. Assume
\begin{equation}
M:=\sup_{(t,x,p)\in[0,T]\times\mathbb{R}^d\times\Delta(I))} (w_1-w_2)>0.
\end{equation}
Since $w_1$ and $w_2$ are bounded we have for $\epsilon,\alpha,\eta>0$ that
\begin{equation}
\begin{array}{l}
M_{\epsilon,\alpha,\eta}:= \max_{(t,s,x,y,p)\in[0,T]^2\times\mathbb{R}^{2d}\times\Delta(I)}\bigg\{ w_1(t,x,p)-w_2(s,y,p)
\ \\
\ \ \ \ \ \ \ \ \ \ \ \ \ \ \ \ \ \ \ \ \ \ \ \ \ \ \ \ \ \ \ \ \ \ \ \ \ \ \ \ \ \ \ \ \ \ \ \ \ -\frac{|t-s|^2+|x-y|^2}{2\epsilon}-\frac{\alpha}{2} (|x|^2+|y|^2)+\eta t\bigg\}
\end{array}
\end{equation}
is finite and achieved at a point $(\bar t,\bar s,\bar x,\bar y,\bar p)$ (dependent on ${\epsilon,\alpha,\eta}$). Furthermore we have for the limit
\begin{equation}
\lim_{\epsilon,\alpha,\eta\downarrow 0} M_{\epsilon,\alpha,\eta}=\sup_{(t,x,p)\in[0,T]\times\mathbb{R}^d\times\Delta(I))} (w_1-w_2)=M>0.
\end{equation}
With (9.5) and the H\"older continuity of $w_1$ and $w_2$ we have with (9.11) that $\bar t,\bar s<T$ for $\epsilon,\alpha,\eta$ small enough. Also note that $\bar p\in\text{Int}(\Delta(I))$ as soon as $M_{\epsilon,\alpha,\eta}>0$.\\
We now consider a new penalization: For $\beta,\delta>0$ small
\begin{equation}
\begin{array}{l}
M_{\epsilon,\alpha,\eta,\delta,\beta}:=\max_{(t,s,x,y,p,q)\in[0,T]^2\times\mathbb{R}^{2d}\times\Delta(I)^2}\bigg\{ w_1(t,x,p)-w_2(s,y,p)\\
\ \\
\ \ \ \ \ \ \ \ \ \ \ \ \ \ \ \ \ \ \ \ \ \ \ \ -\frac{|t-s|^2+|x-y|^2}{2\epsilon}-\frac{|p-q|}{2\delta}-\frac{\alpha}{2} (|x|^2+|y|^2)+\eta t+\frac{\beta}{2}(|p|^2+|q|^2)\bigg\}
\end{array}
\end{equation}
is attained at a point  $(\tilde t,\tilde s,\tilde x,\tilde y,\tilde p,\tilde q)$ (dependent on ${\epsilon,\alpha,\eta,\delta,\beta}$), where
\begin{equation}
\frac{|\tilde t-\tilde s|^2+|\tilde x-\tilde y|^2}{2\epsilon},\frac{|\tilde p-\tilde q|}{2\delta},{\alpha}|\tilde x|^2,\alpha |\tilde y|^2, \beta|\tilde p|^2,\beta |\tilde q|^2\leq 2(|w_1|_\infty+|w_2|_\infty).
\end{equation}
Furthermore we have with (9.11)
\begin{equation}
w_1(\tilde t,\tilde x,\tilde p)-w_2(\tilde s,\tilde y,\tilde q)>0.
\end{equation}
So for $\beta,\delta\downarrow0$ $(\tilde t,\tilde s,\tilde x,\tilde y,\tilde p,\tilde q)$ converges (up to subsequences) to some $(\bar t,\bar s,\bar x,\bar y,\bar p,\bar p)$, where $(\bar t,\bar s,\bar x,\bar y,\bar p)$ is a maximum point of (9.12). Hence for $\beta,\delta$ sufficiently small we have that $\tilde p,\tilde q\in\text{Int}({\Delta(I)})$.

From the usual maximum principle (see e.g. \cite{CIL}) we have that:\\
for all $\sigma\in(0,1)$ there exist $X_1,X_2\in S^d$, $P_1,P_2\in S^I$ such that on $[0,T]^2\times\mathbb{R}^{2d}\times T_I$ with $ T_I=\{z\in\mathbb{R}^I: \sum_i z_i=0\}$ we have
\begin{equation*}
\left(\frac{\tilde t-\tilde s}{\epsilon}-\eta,\ \frac{\tilde x-\tilde y}{\epsilon}+\alpha\tilde x,\ \frac{\tilde p-\tilde q}{\delta}-\beta \tilde p,\  X_1,\ P_1|_{T_I})\right)\in\bar{\mathcal{D}}^{1,2,2,-}w_1(\tilde t,\tilde x,\tilde p)
\end{equation*}
and
\begin{equation*}
\left (\frac{\tilde t-\tilde s}{\epsilon},\ \frac{\tilde x-\tilde y}{\epsilon}-\alpha\tilde y,\ \frac{\tilde p-\tilde q}{\delta}+\beta \tilde q,\  X_2,\ P_2|_{T_I}\right)\in\bar{\mathcal{D}}^{1,2,2,+}w_2(\tilde s,\tilde y,\tilde q)
\end{equation*}
with
\begin{equation*}
\text{diag} \left( \left(
\begin{array}{cc}
X_1 & 0  \\
0 & -X_2 
\end{array}\right),
\left(
\begin{array}{cc}
P_1|_{T_I} & 0  \\
0 & -P_2|_{T_I}  \end{array}\right)\right)\leq A+\sigma A^2,
\end{equation*}
where
\begin{equation*}
A=\text{diag}
\left\{ \frac{1}{\epsilon}\left(
\begin{array}{cc}
\text{id}_d & -\text{id}_d   \\
-\text{id}_d  & \text{id}_d  
\end{array}\right)+\alpha \text{id}_{2d} ,
\frac{1}{\delta}\left(
\begin{array}{cc}
\text{id}_I  & -\text{id}_I   \\
-\text{id}_I  & \text{id}_I  \end{array}\right)-\beta\text{id}_{2I} \right\}.
\end{equation*}
Note that 
\begin{equation}
\left(
\begin{array}{cc}
X_1 & 0  \\
0 & -X_2 
\end{array}\right)\leq \left(\frac{1}{\epsilon}+2\frac{\sigma}{\epsilon^2}+2\frac{\alpha\sigma}{\epsilon}\right)\left(
\begin{array}{cc}
\text{id}_d & -\text{id}_d   \\
-\text{id}_d  & \text{id}_d  
\end{array}\right)+(\alpha+\alpha^2\sigma) \text{id}_{2d}
\end{equation}
and
\begin{equation}
(P_1-P_2)|_{T_I}\leq  (-\beta+\sigma \beta^2)  \text{id}_{2I}.
\end{equation}
Since $w_1$ is a viscosity subsolution to (3.5) we have
\begin{equation}
\lambda_{\min}(\tilde p,P_1)\geq 0.
\end{equation}
And since $\tilde p\in\text{Int}(\Delta(I))$, this yields with (9.16) to
\begin{equation}
\lambda_{\min}(\tilde q,P_2)>0.
\end{equation}
Furthermore since $w_1$ is a viscosity subsolution and $w_2$ is a viscosity supersolution we have
\begin{equation}
\begin{array}{rcl}
w_1(\tilde t,\tilde x,\tilde p)&\leq& \langle h(\tilde t,\tilde x),\tilde p\rangle \\
\ \\
w_2(\tilde s,\tilde y,\tilde q)&\geq& \langle f(\tilde s,\tilde y),\tilde q\rangle,
\end{array}
\end{equation}
which yields for $\epsilon,\alpha,\eta,\delta,\beta$ small enough with (9.11)
\begin{equation}
\begin{array}{rcl}
w_1(\tilde t,\tilde x,\tilde p)&>& \langle f(\tilde t,\tilde x),\tilde p\rangle \\
\ \\
w_2(\tilde s,\tilde y,\tilde q)&<& \langle h(\tilde s,\tilde y),\tilde q\rangle.
\end{array}
\end{equation}
So again using the subsolution property of $w_1$ and the supersolution property of $w_2$ we have with (9.20)
\begin{equation}
\begin{array}{rcl}
\frac{\tilde t-\tilde s}{\epsilon}-\eta + \frac{1}{2}\textnormal{tr}(aa^*(\tilde t,\tilde x)X_1)+b(\tilde t,\tilde x)\left(\frac{\tilde x-\tilde y}{\epsilon}+\alpha\tilde x\right)&\geq& 0 \\
\ \\
\frac{\tilde t-\tilde s}{\epsilon}+\frac{1}{2}\textnormal{tr}(aa^*(\tilde s,\tilde y)X_2)+b(\tilde t,\tilde x)\left(\frac{\tilde x-\tilde y}{\epsilon}-\alpha\tilde y\right)&\leq& 0.
\end{array}
\end{equation}
Now using (9.15) and (9.16) in (9.21) yields a contradiction for $\epsilon,\alpha,\eta$ sufficiently small as in the standard case (see \cite{CIL}).

\ \\

\bibliographystyle{plain}
\bibliography{bib}



\end{document}